\newcommand\reallywidehat[1]{\arraycolsep=0pt\relax%
\begin{array}{c}
\stretchto{
  \scaleto{
    \scalerel*[\widthof{\ensuremath{#1}}]{\kern-.5pt\bigwedge\kern-.5pt}
    {\rule[-\textheight/2]{1ex}{\textheight}} 
  }{\textheight} %
}{0.5ex}\\           
#1\\                 
\rule{-1ex}{0ex}
\end{array}
}
\newtheorem{tw}{Theorem}[section]
\newtheorem{lem}[tw]{Lemma}
\newtheorem{cor}[tw]{Corollary}
\newtheorem{prop}[tw]{Proposition}
\newtheorem{definition}{Definition}[section]
\def\X{{\mathcal X}}
\def\G{{\mathcal G}}
\def\R{{\mathcal R}}
\def\N{{\mathbb N}}
\def\t{{\mathbf t}}
\def\s{{\mathbf s}}
\def\={\hspace{-3mm}&=&\hspace{-3mm}}
\title{\texttt{ Characterizations of multidimensional  compact almost automorphic functions and applications to Poisson's and heat equations}}
\author{ Alan Chávez$^1$\footnote{E-mail: ajchavez@unitru.edu.pe (corresponding author)}~, Jolbyn Casta\~neda $^2$\footnote{E-mail: jocastanedar@unitru.edu.pe}, Alexis R. Carranza $^3$\footnote{E-mail: alexxx@unitru.edu.pe}, Kamal Khalil $^4$\footnote{E-mail: kamal.khalil.00@gmail.com.}\\
\small{$^{1,3}$ OASIS research group, Instituto de investigaci\'on en Matem\'aticas, }\\
\small{ Departamento de Matem\'aticas, FCFYM, Universidad Nacional de Trujillo,}\\
\small{Av. Juan Pablo II S/N, Trujillo-Per\'u}\\
\small{$^{2}$ OASIS research group \& Escuela de Matem\'aticas - FCFYM, Universidad Nacional de Trujillo,}\\
\small{Av. Juan Pablo II S/N, Trujillo-Per\'u}\\
\small{$^{4}$ LMAH, University of Le Havre Normandie, FR-CNRS-3335, ISCN, Le Havre 76600, France,}\\\\
\small{TO THE MEMORY OF MANUEL PINTO}\\
}
\begin{document}

\maketitle

\begin{abstract}

Let \(\mathcal{G}\) be a non-empty subset of the Euclidean space \(\mathbb{R}^m\) (\(m \geq 1\)). This work is dedicated to further exploring the properties of \(\mathcal{G}\)-multi-almost automorphic functions defined on \(\mathbb{R}^m\) with values in a Banach space \(\mathbb{X}\). Using the theory of \(\mathcal{G}\)-multi-almost automorphic functions, we provide two new characterizations of compact almost automorphic functions. In the first characterization, \(\mathcal{G}\) corresponds to the lattice subgroup \(\mathbb{Z}^m \subset \mathbb{R}^m\); in the second, \(\mathcal{G}\) is taken to be a dense subset of \(\mathbb{R}^m\). Furthermore, we establish the invariance of the space of bounded and compactly \(\mathcal{G}\)-multi-almost automorphic functions under integral operators with Bi-almost automorphic kernels. Finally, we present applications to the analysis of the almost automorphic dynamics of Poisson's equation and the heat equation.

\end{abstract}

\tableofcontents

\section{Introduction}\label{sec:Introduction}
\setcounter{equation}{0}
\color{red}


\color{black}
Since their introduction by S. Bochner in 1966 \cite{03,04,05}, almost automorphic functions have garnered significant attention from mathematicians. This interest arises, on one hand, because these functions are a natural generalization of periodic and almost periodic functions, making them an ideal framework for studying the dynamics of differential, integral, or integro-differential equations, as demonstrated in works such as \cite{campos2014almost, campos2020barycentric, rezoug2024asymptotically} and other related references. On the other hand, the concept of almost automorphy provides a fertile ground for investigations in abstract topological dynamical systems, as evidenced by studies like \cite{fuhrmann2020tameness, garcia2021mean, lenz2024abstract, lenz2024pure} and the references cited therein.

In this work, we contribute to the ongoing study of the almost automorphic dynamics of partial differential equations, specifically to the Poisson's and heat equations.

The concept of complex-valued almost automorphic functions on a topological group \( G \) was first developed by Veech \cite{18, veech1965almost}. Let \( \mathcal{X} \) be a Banach space, \( G = \mathbb{R}^m \), and let \( \mathrm{R} \) denote the collection of all sequences in \( \mathbb{R}^m \). The notion of almost automorphy for a function \( f: \mathbb{R}^m \to \mathcal{X} \) is described in the following definition:

\begin{definition}(Bochner)\label{dfAA}
Let \( f: \mathbb{R}^m \rightarrow \mathcal{X} \) be a continuous function. Then, \( f \) is almost automorphic if and only if, for any sequence \( \mathbf{b}_k = (b_k^1, b_k^2, \dots, b_k^m) \in \mathrm{R} \), there exist a subsequence \( \mathbf{b}_{k_l} = (b_{k_l}^1, b_{k_l}^2, \dots, b_{k_l}^m) \) of \( \mathbf{b}_k \) and a function \( f^* : \mathbb{R}^m \to \mathcal{X} \) such that the following pointwise limits hold:
\[
f^*(\mathbf{t}) = \lim_{l \to \infty} f(\mathbf{t} + \mathbf{b}_{k_l}), \quad
f(\mathbf{t}) = \lim_{l \to \infty} f^*(\mathbf{t} - \mathbf{b}_{k_l}).
\]
\end{definition}

Additionally, if the previous limits are replaced by the following:
\[
\lim_{l \to \infty} \sup_{\mathbf{t} \in \mathcal{K}} \| f(\mathbf{t} + \mathbf{b}_{k_l}) - f^*(\mathbf{t}) \|_{\mathcal{X}} = 0, \quad
\lim_{l \to \infty} \sup_{\mathbf{t} \in \mathcal{K}} \| f^*(\mathbf{t} - \mathbf{b}_{k_l}) - f(\mathbf{t}) \|_{\mathcal{X}} = 0,
\]
where \( \mathcal{K} \) is any compact subset of \( \mathbb{R}^m \), then \( f \) is called compact almost automorphic.

Note that in the provided definition, the sequence \( (\mathbf{b}_k)_{k \in \mathbb{N}} \) can be any sequence in the Euclidean space \( \mathbb{R}^m \). However, a new approach explored in works such as \cite{chavez2022multi, chavez2023almost} involves considering these sequences within a set \( \mathrm{R} \), which is a non-empty collection of sequences in \( \mathbb{R}^m \) (rather than the collection of all sequences in \( \mathbb{R}^m \)). This perspective led the first author and collaborators to introduce the concept of \( \mathrm{R} \)-multi-almost periodic/automorphic functions, a more general notion than the classical concept of almost periodicity/automorphy. In \cite{chavez2022multi, chavez2023almost}, the authors extensively studied the fundamental structural properties of \( (\mathrm{R}, \mathcal{B}) \)-multi-almost periodic/automorphic functions and provided applications to both partial differential equations and integral equations.

Let \( \mathcal{G}\) be a non empty subset of \(\mathbb{R}^m\). In the present work, we will denote by \( \mathrm{R}_\mathcal{G} \) a non empty collection of sequences in \( \mathcal{G} \), while \( \mathrm{R}_\mathcal{G}^a \) will denote the collection of all sequences in \( \mathcal{G} \). What is involved in this work is the notion of \( \mathrm{R}_\mathcal{G} \)-multi-almost automorphic function, which is as follows (see also Definition \ref{eovako} below)

\begin{definition}\label{eovako00}\index{function!(compactly) $({\mathrm{R}}_{{\mathcal{G}}},{\mathcal B})$-multi-almost automorphic}
Let $F : {\mathbb R}^{m} \rightarrow \mathcal{X}$ be a continuous function.  $F(\cdot)$ is ${\mathrm{R}}_{{\mathcal{G}}}$-multi-almost automorphic if, for every sequence $({\bf b}_{k}=(b_{k}^{1},b_{k}^{2},\cdot \cdot\cdot ,b_{k}^{m})) \in {\mathrm{R}}_{{\mathcal{G}}}$ there exist a subsequence $({\bf b}_{k_{l}}=(b_{k_{l}}^{1},b_{k_{l}}^{2},\cdot \cdot\cdot , b_{k_{l}}^{m}))$ of $({\bf b}_{k})$ and a function
$F^{\ast} : {\mathbb R}^{m} \rightarrow \mathcal{X}$ such that
\begin{align}\label{love1234567800}
\lim_{l\rightarrow +\infty}F\bigl({\bf t} +(b_{k_{l}}^{1},\cdot \cdot\cdot, b_{k_{l}}^{m}) \bigr)=F^{\ast}({\bf t} ) 
\end{align}
and
\begin{align}\label{love12345678900}
\lim_{l\rightarrow +\infty}F^{\ast}\bigl({\bf t} -(b_{k_{l}}^{1},\cdot \cdot\cdot, b_{k_{l}}^{m}) \bigr)=F({\bf t} ),
\end{align}
pointwise for ${\bf t}\in {\mathbb R}^{m}$.
\end{definition}

When \(\mathrm{R}_\mathcal{G}=\mathrm{R}_\mathcal{G}^a \), by way of simplifying things, \( \mathrm{R}_\mathcal{G}^a \)-multi-almost automorphic functions will be referred to as \( \mathcal{G} \)-multi-almost automorphic functions.

The concept of \( \mathcal{G} \)-multi-almost automorphic functions is not entirely new. In the one-dimensional case where \( \mathcal{G} = \mathbb{Z} \) (the discrete subgroup of \( \mathbb{R} \)), the first author and collaborators introduced the class of piecewise continuous almost automorphic functions (also known as \( \mathbb{Z} \)-almost automorphic functions) between 2013 and 2014. Initially, this class of discontinuous functions was developed to study differential equations with piecewise constant arguments \cite{chavez2014discontinuous}. Later, the approach was extended to investigate differential equations with deviating arguments, as demonstrated in works such as \cite{ding2017asymptotically, qi2022piecewise}.

More recently, the theory of \( \mathbb{Z} \)-almost automorphic functions with values in a general Banach space has been explored in \cite{Alanpiecewise}. With the help of this theory, it is proved that  every compact almost automorphic function is \( \mathbb{Z} \)-almost automorphic and uniformly continuous, and vice versa, providing a new characterization of the compact almost automorphic function space. Additionally, in the same work, \( \mathbb{Z} \)-almost automorphic functions were utilized to offer a straightforward proof of the characterization of discrete almost automorphic functions as restrictions to \( \mathbb{Z} \) of compact almost automorphic functions. Consequently, new applications beyond the theory of differential equations for \( \mathbb{Z} \)-almost automorphic functions have emerged. In the work \cite{es2022compact} the study of the existence and uniqueness of compact almost automorphic solutions for a class of semilinear evolution equations in Banach spaces is provided.

Building on the aforementioned preliminaries, in the applications of the class of \( \mathbb{Z} \)-almost automorphic functions and  recognizing the significance of almost automorphic functions in the study of differential equations, this work makes two primary contributions:
\begin{itemize}
\item First, we present two new characterizations of compact almost automorphic functions (Section \ref{section char}), expanding the theoretical framework established in earlier studies.
\item Second, using the new characterizations, we analyze the almost automorphic dynamics of Poisson's and heat equations.
\end{itemize}

\noindent Let us develop into more detail regarding the second point mentioned previously. We begin by considering the Poisson's equation:

\begin{equation}\label{Poisson 0}
\Delta u=f,
\end{equation}
where \( \Delta \) denotes the Laplace operator in the spatial variable \( x = (x_1, \ldots, x_m) \), defined as \( \Delta = \sum_{i=1}^m \partial^2_{x_i} \). After characterizing compact almost automorphic functions, we establish, for equation (\ref{Poisson 0}), the following result: {\it if \( f \) is bounded and \( \mathcal{G} \)-multi-almost automorphic, where \( \mathcal{G} \) is either the lattice subgroup \( \mathbb{Z}^m \) or a dense subset of \( \mathbb{R}^m \), then every bounded and continuous function that is a distributional solution of the Poisson's equation (\ref{Poisson 0}) is compact almost automorphic}.

Now, le us consider the following initial value problem for the heat equation:  
\begin{equation}\label{Heat equation 1}
            \left\{
                 \begin{array}{lll}
                \partial_t u(t,x)&=&\Delta u(t,x), \quad t>0,\, x\in \mathbb{R}^m,\\
                   u(0,x) &= &f(x), \quad x\in \mathbb{R}^m,
                 \end{array}\right.
 \end{equation}
where \( \partial_t \) denotes the partial derivative with respect to time \( t \) and \(\Delta \) is the Laplace operator in space variable. Analogously to the Poisson's equation, we prove that: {\it if the initial data \( f \) in (\ref{Heat equation 1}) is \( \mathcal{G} \)-multi-almost automorphic, where \( \mathcal{G} \) is either the lattice subgroup \( \mathbb{Z}^m \) or a dense subset of \( \mathbb{R}^m \), then the solution \( u(t,x) \) of the heat equation (\ref{Heat equation 1}) is also compact almost automorphic in space variable}. 

In summary, our work extends and generalizes results from previous studies such as \cite{CHAKPPINTO2023, sibuya1971almost}, emphasizing the persistence of compact almost automorphic properties in solutions to the Poisson's and heat equations under weaker conditions on the source or initial data (respectively).

The organization of the present work is as follows: In Section \ref{section maa}, we provide the definition and some basic facts about \( \mathcal{G} \)-multi-almost automorphic functions. In Section \ref{section dense}, we study the case in which \( \mathcal{G} \) is a dense subgroup of \( \mathbb{R}^m \). In Section \ref{section char}, we present new characterizations of compact almost automorphic functions. In Section \ref{section conv}, we prove the invariance under convolution products with compact Bi-almost automorphic kernels of the bounded and compactly \( \mathrm{R}_{\mathcal{G}} \)-multi-almost automorphic function space. In Section \ref{section appl}, we provide applications of the developed theory to Poisson's and heat equations. Finally, the appendices contain additional results and proofs related to almost automorphic functions on topological groups, as well as conditions under which a subgroup of the Euclidean space becomes dense.

\section{On $\mathcal{G}$-multi-almost automorphic functions}\label{section maa}

Throughout the present work, $\mathcal{X}, \mathcal{Y}$ denote Banach spaces, we assume that $m\in {\mathbb N}$, ${\mathcal B}$ is a non-empty collection of subsets of $\mathcal{X}$, \( {\mathcal{G}} \) is a non-empty subset of ${\mathbb R}^{m}$, ${\mathrm{R}}_{{\mathcal{G}}}$ is a non-empty set of sequences in $\mathcal{G}$ and ${\mathrm{R}}_{{\mathcal{G}}}^a$ is the collection of all the sequences in $\mathcal{G}$. Usually, ${\mathcal B}$ denotes the collection of all bounded subsets of $\mathcal{X}$ or all compact subsets of $\mathcal{X}$. We will always assume that for every $x\in \mathcal{X}$, there exist $B\in {\mathcal B}$ such that $x\in B.$ $BUC(\mathbb{R}^m; \X)$ is the space of bounded and uniformly continuous functions from  $\mathbb{R}^m$ to $ \X$, while $ UC(\mathbb{R}^m; \X)$ is the space of uniformly continuous functions from  $\mathbb{R}^m$ to $ \X$.

\begin{definition}\label{eovako}\index{function!(compactly) $({\mathrm{R}}_{{\mathcal{G}}},{\mathcal B})$-multi-almost automorphic}
Suppose that $F : {\mathbb R}^{m} \times \mathcal{X} \rightarrow \mathcal{Y}$ is a continuous function. Then 
we say that the function $F(\cdot;\cdot)$ is $({\mathrm{R}}_{{\mathcal{G}}},{\mathcal B})$-multi-almost automorphic if, for every $B\in {\mathcal B}$ and for every sequence $({\bf b}_{k}=(b_{k}^{1},b_{k}^{2},\cdot \cdot\cdot ,b_{k}^{m})) \in {\mathrm{R}}_{{\mathcal{G}}}$ there exist a subsequence $({\bf b}_{k_{l}}=(b_{k_{l}}^{1},b_{k_{l}}^{2},\cdot \cdot\cdot , b_{k_{l}}^{m}))$ of $({\bf b}_{k})$ and a function
$F^{\ast} : {\mathbb R}^{m} \times \mathcal{X} \rightarrow \mathcal{Y}$ such that
\begin{align}\label{love12345678}
\lim_{l\rightarrow +\infty}F\bigl({\bf t} +(b_{k_{l}}^{1},\cdot \cdot\cdot, b_{k_{l}}^{m});x\bigr)=F^{\ast}({\bf t};x) 
\end{align}
and
\begin{align}\label{love123456789}
\lim_{l\rightarrow +\infty}F^{\ast}\bigl({\bf t} -(b_{k_{l}}^{1},\cdot \cdot\cdot, b_{k_{l}}^{m});x\bigr)=F({\bf t};x),
\end{align}
pointwise for ${\bf t}\in {\mathbb R}^{m}$ and uniformly for $x\in B$. If the above limits converge uniformly on compact subsets of ${\mathbb R}^{m}$, then we say that 
$F(\cdot ; \cdot)$ is compactly $({\mathrm{R}}_{{\mathcal{G}}},{\mathcal B})$-multi-almost automorphic.  
\end{definition}
In the particular case in which ${\mathrm{R}}_{{\mathcal{G}}}={\mathrm{R}}_{{\mathcal{G}}}^a$, we say that $F(\cdot ; \cdot)$ is $({\mathcal{G}},{\mathcal B})$-multi-almost automorphic (res. compactly $({\mathcal{G}},{\mathcal B})$-multi-almost automorphic). Also note that, if $\mathcal{G}=\mathbb{R}^m$, then we are in the class of $({\mathrm{R}},{\mathcal B})$-multi-almost automorphic functions, see \cite{chavez2022multi}.

Also, if the function $F({\bf t};x) =F( {\bf t} )$ (i.e., $F : {\mathbb R}^{m} \rightarrow \mathcal{Y}$), then we say that $F$ is ${\mathrm{R}}_{{\mathcal{G}}}$-multi-almost automorphic (resp. compactly ${\mathrm{R}}_{{\mathcal{G}}}$-multi-almost automorphic) and that space is denoted by $ {{\mathrm{R}}_{{\mathcal{G}}}}AA({\mathbb R}^{m} ; \mathcal{Y})$ (resp. $ \mathcal{K} {\mathrm{R}}_{{\mathcal{G}}}AA({\mathbb R}^{m} ; \mathcal{Y})$). In this situation, if ${\mathrm{R}}_{{\mathcal{G}}}={\mathrm{R}}_{{\mathcal{G}}}^a$, $F(\cdot)$ will be called ${\mathcal{G}}$-multi-almost automorphic (resp. compactly ${\mathcal{G}}$-multi-almost automorphic), see Definition \ref{eovako00}; and the space is denoted by $ {{\mathcal{G}}}AA({\mathbb R}^{m} ; \mathcal{Y})$ (resp. $ \mathcal{K} {\mathcal{G}}AA({\mathbb R}^{m} ; \mathcal{Y})$). Also, $ AA({\mathbb R}^{m} ; \mathcal{Y})$ (resp. $ \mathcal{K}AA({\mathbb R}^{m} ; \mathcal{Y})$) will denote the space of (classical) almost automorphic functions (resp. compactly-almost automorphic functions).

\begin{prop}\label{eovako54321}\index{function!(compactly) $({\mathrm{R}}_{{\mathcal{G}}},{\mathcal B})$-multi-almost automorphic}
Suppose that $F : {\mathbb R}^{m} \times \mathcal{X} \rightarrow \mathcal{Y}$ is a continuous function. Then 
the function $F(\cdot;\cdot)$ is $({\mathrm{R}}_{{\mathcal{G}}},{\mathcal B})$-multi-almost automorphic if and only if for every $B\in {\mathcal B}$ and for every sequence $({\bf b}_{k}=(b_{k}^{1},b_{k}^{2},\cdot \cdot\cdot ,b_{k}^{m})) \in {\mathrm{R}}_{{\mathcal{G}}}$ there exists a subsequence $({\bf b}_{k_{l}}=(b_{k_{l}}^{1},b_{k_{l}}^{2},\cdot \cdot\cdot , b_{k_{l}}^{m}))$ of $({\bf b}_{k})$ 
such that
\begin{align}\label{snajkamlada}
\lim_{l\rightarrow +\infty}\lim_{n\rightarrow +\infty} F\Bigl({\bf t} - {\bf b}_{k_{l}}+{\bf b}_{k_{n}} ;x\Bigr)=F({\bf t};x) ,
\end{align}
pointwise for ${\bf t}\in {\mathbb R}^{m}$  and all $x\in B$.
\end{prop}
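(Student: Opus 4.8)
The plan is to prove the two implications separately, using a diagonal-type extraction argument together with the pointwise limit characterization built into Definition~\ref{eovako}.

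\medskip

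\noindent\textbf{Necessity.} Assume $F$ is $({\mathrm{R}}_{{\mathcal{G}}},{\mathcal B})$-multi-almost automorphic. Fix $B\in{\mathcal B}$ and a sequence $({\bf b}_k)\in{\mathrm{R}}_{{\mathcal{G}}}$. By definition there are a subsequence $({\bf b}_{k_l})$ and a function $F^{\ast}$ with
$\lim_{l\to\infty}F({\bf t}+{\bf b}_{k_l};x)=F^{\ast}({\bf t};x)$ and $\lim_{l\to\infty}F^{\ast}({\bf t}-{\bf b}_{k_l};x)=F({\bf t};x)$, pointwise in ${\bf t}$ and uniformly in $x\in B$. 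Now fix ${\bf t}\in{\mathbb R}^m$ and $x\in B$. For each fixed $l$, replacing ${\bf t}$ by ${\bf t}-{\bf b}_{k_l}$ in the first limit gives $\lim_{n\to\infty}F({\bf t}-{\bf b}_{k_l}+{\bf b}_{k_n};x)=F^{\ast}({\bf t}-{\bf b}_{k_l};x)$. Taking $l\to\infty$ of the right-hand side and invoking the second limit yields exactly \eqref{snajkamlada}. The only point needing care is that the inner limit over $n$ must be taken along the \emph{same} subsequence $(k_n)$, which is legitimate since $({\bf b}_{k_n})_{n}$ is itself a subsequence of $({\bf b}_k)$ and the first convergence in Definition~\ref{eovako} holds at every point ${\bf t}-{\bf b}_{k_l}$.

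\medskip

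\noindent\textbf{Sufficiency.} Conversely, assume \eqref{snajkamlada} holds. Fix $B\in{\mathcal B}$ and $({\bf b}_k)\in{\mathrm{R}}_{{\mathcal{G}}}$, and let $({\bf b}_{k_l})$ be the subsequence provided by the hypothesis. The candidate limit function is
$F^{\ast}({\bf t};x):=\lim_{l\to\infty}F({\bf t}+{\bf b}_{k_l};x)$, so the first task is to show this limit exists for every ${\bf t},x$; I would do this by a further diagonal extraction. Using the iterated limit \eqref{snajkamlada} one shows that the sequence $\bigl(F({\bf t}+{\bf b}_{k_n};x)\bigr)_n$ is, along a suitable sub-subsequence, Cauchy: for ${\bf s}={\bf t}+{\bf b}_{k_l}$ applied in \eqref{snajkamlada} we can make $F({\bf t}+{\bf b}_{k_l};x)$ as close as desired to the double limit by choosing $l$ large and then $n$ large. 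Passing to this sub-subsequence (relabelled again as $({\bf b}_{k_l})$) defines $F^{\ast}$. The second relation $\lim_{l\to\infty}F^{\ast}({\bf t}-{\bf b}_{k_l};x)=F({\bf t};x)$ then follows because $F^{\ast}({\bf t}-{\bf b}_{k_l};x)=\lim_{n\to\infty}F({\bf t}-{\bf b}_{k_l}+{\bf b}_{k_n};x)$, and letting $l\to\infty$ reproduces \eqref{snajkamlada}; uniformity in $x\in B$ is inherited at each stage since all hypotheses are uniform on $B$.

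\medskip

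\noindent\textbf{Main obstacle.} The delicate point is the sufficiency direction: the hypothesis only gives an \emph{iterated} limit $\lim_l\lim_n$, which a priori does not guarantee that $\lim_l F({\bf t}+{\bf b}_{k_l};x)$ exists at all (an iterated limit can exist while the ``diagonal'' sequence diverges). The resolution is a careful diagonalization—pass first to a subsequence along which the pointwise limits $F^{\ast}({\bf t};x)$ exist (e.g.\ extract over a countable dense set of ${\bf t}$ and a countable dense subset of $B$, then use continuity of $F$ to upgrade to all ${\bf t}$ and all $x\in B$), and only afterwards verify that this $F^{\ast}$ satisfies the backward relation via \eqref{snajkamlada}. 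Managing the bookkeeping of nested subsequences, while keeping the convergence uniform on each $B\in{\mathcal B}$, is the technical heart of the argument; everything else is a direct substitution.
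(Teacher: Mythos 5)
Your necessity direction is correct and is exactly the computation the paper sketches. The problem is in the sufficiency direction, where you have manufactured an obstacle that is not there and then proposed a repair that does not work. The limit $\lim_{l}F({\bf t}+{\bf b}_{k_l};x)$ is \emph{not} a ``diagonal'' of the double array in \eqref{snajkamlada}: fixing any index $l_0$ and evaluating \eqref{snajkamlada} at the translated point ${\bf t}'={\bf t}+{\bf b}_{k_{l_0}}$, the inner limit reads $\lim_{n}F({\bf t}+{\bf b}_{k_{l_0}}-{\bf b}_{k_{l_0}}+{\bf b}_{k_n};x)=\lim_{n}F({\bf t}+{\bf b}_{k_n};x)$, and the existence of this inner limit for every ${\bf t}'$ and every $l$ is part of what the iterated-limit hypothesis asserts. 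So one simply \emph{defines} $F^{\ast}({\bf s};x):=\lim_{n}F({\bf s}+{\bf b}_{k_n};x)$ (well defined for all ${\bf s}$, since ${\bf s}={\bf t}-{\bf b}_{k_l}$ sweeps out all of ${\mathbb R}^m$), and the outer limit in \eqref{snajkamlada} is then verbatim the return relation $\lim_{l}F^{\ast}({\bf t}-{\bf b}_{k_l};x)=F({\bf t};x)$. This is the paper's proof; no further subsequence is extracted.

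Beyond being unnecessary, your proposed extraction would fail as stated. In a general Banach space $\mathcal{Y}$ a bounded sequence need not have a convergent subsequence, and $F$ is not assumed bounded, so Bolzano--Weierstrass-type diagonalization over a countable dense set of ${\bf t}$ (or of $B$) has no footing; moreover, pointwise convergence on a dense set of ${\bf t}$ cannot be upgraded to convergence at every ${\bf t}$ using only continuity of $F$ --- the pointwise limit of continuous functions need not be continuous, and one would need equicontinuity or uniform convergence, neither of which is available here. So the step you call the ``technical heart'' is both a dead end and avoidable. (A smaller shared issue: like the paper, you assert that uniformity in $x\in B$ is ``inherited,'' but the hypothesis \eqref{snajkamlada} is only stated for each $x\in B$, so if uniformity in $x$ is wanted in the conclusion it deserves an explicit word rather than an appeal to hypotheses being ``uniform on $B$.'')
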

\begin{proof}
It is not difficult to see that if $F$ is $({\mathrm{R}}_{{\mathcal{G}}},{\mathcal B})$-multi-almost automorphic, then given any set $B\in {\mathcal B}$ and any sequence $({\bf b}_{k}=(b_{k}^{1},b_{k}^{2},\cdot \cdot\cdot ,b_{k}^{m})) \in {\mathrm{R}}_{{\mathcal{G}}}$, there exists a subsequence  $({\bf b}_{k_{l}}=(b_{k_{l}}^{1},b_{k_{l}}^{2},\cdot \cdot\cdot , b_{k_{l}}^{m}))$ of $({\bf b}_{k})$ such that (\ref{snajkamlada}) holds. On the other hand, let $({\bf b}_{k}=(b_{k}^{1},b_{k}^{2},\cdot \cdot\cdot ,b_{k}^{m})) \in {\mathrm{R}}_{{\mathcal{G}}}$ and $B\in {\mathcal B}$, by hypothesis, there exists a subsequence  $({\bf b}_{k_{l}}=(b_{k_{l}}^{1},b_{k_{l}}^{2},\cdot \cdot\cdot , b_{k_{l}}^{m}))$ of $({\bf b}_{k})$ such that (\ref{snajkamlada}) holds. Now, let us define, for each fixed $l$, each ${\bf t}\in \mathbb{R}^m$ and any $x\in B$, the function
$$F^*({\bf t} -{\bf b}_{k_{l}};x):=\lim_{n\rightarrow +\infty} F\Bigl({\bf t} -{\bf b}_{k_{l}}+{\bf b}_{k_{n}} ;x\Bigr)\, . $$
Then, for each ${\bf t} \in \mathbb{R}^m$, since ${\bf t}={\bf t} +{\bf b}_{k_{l}} - {\bf b}_{k_{l}}$ we have
$$F^*({\bf t};x)=\lim_{n\rightarrow +\infty} F\Bigl({\bf t} + {\bf b}_{k_{n}} ;x\Bigr)\, ,$$
uniformly for $x\in B$. On the other hand, we have
$$ \lim_{n\rightarrow +\infty}F^*({\bf t}- {\bf b}_{k_{n}};x)=\lim_{l\rightarrow +\infty}F^*({\bf t} - {\bf b}_{k_{l}};x)=$$
$$=\lim_{l\rightarrow +\infty}\lim_{n\rightarrow +\infty} F\Bigl({\bf t} - {\bf b}_{k_{l}}+{\bf b}_{k_{n}} ;x\Bigr)=F\Bigl({\bf t } ;x\Bigr)\, ,$$
point-wisely in ${\bf t} \in \mathbb{R}^m$ and uniformy for $x\in B$. 
Thus, $F$ is $({\mathrm{R}}_{{\mathcal{G}}},{\mathcal B})$-multi-almost automorphic.
\end{proof}

\begin{definition}\label{UnifContF}
Let $F : {\mathbb R}^{m} \times \mathcal{X} \rightarrow \mathcal{Y}$. Then, the function $F(\cdot;\cdot)$ is called uniformly continuous in $\mathbb{R}^m$, uniformly for any $B\in {\mathcal B}$ if, for every $B\in {\mathcal B}$ and every
$\epsilon>0$, there exists $\delta =\delta (\epsilon,B)>0$, such that, if $\t,\s \in \mathbb{R}^m$ with $||\t-\s||<\delta$, then 
$$
\sup_{x\in B}\| F \bigl({\bf t};x\bigr) - F\bigl({\bf s};x\bigr) \|_{\mathcal{Y}}<\epsilon\, .
$$
\end{definition}
In this situation, we will say that $F : {\mathbb R}^{m} \times \mathcal{X} \rightarrow \mathcal{Y}$ is uniformly continuous in the first variable.

In the following definition, we introduce the notion of $({\mathrm{R}}_{{\mathcal{G}}},{\mathcal B})$-uniformly continuous function.

\begin{definition}\label{(R,B) uniformly continuous}
Let $F : {\mathbb R}^{m} \times \mathcal{X} \rightarrow \mathcal{Y}$. Then, the function $F(\cdot;\cdot)$ is called $({\mathrm{R}}_{{\mathcal{G}}},{\mathcal B})$-uniformly continuous if, for every $B\in {\mathcal B}$ and for every two sequences $({\bf a}_{k}=(a_{k}^{1},a_{k}^{2},\cdots ,a_{k}^{m}))$, $ ({\bf b}_{k}=(b_{k}^{1},b_{k}^{2},\cdot \cdot\cdot ,b_{k}^{m}))$ $ \in {\mathrm{R}}_{{\mathcal{G}}}$ such that $ ||{\bf a}_{k}-{\bf b}_{k} || \rightarrow 0$, as $k\to +\infty$,  we have 
\begin{align}
\lim_{k\rightarrow +\infty } \sup_{x\in B}\| F \bigl({\bf t}+{\bf a}_{k};x\bigr) - F\bigl({\bf t}+{\bf b}_{k};x\bigr) \|_{\mathcal{Y}} \rightarrow 0,
\end{align}
pointwise for ${\bf t} \in \mathbb{R}^m$.
\end{definition}
Note that, if ${\mathcal{G}}=\mathbb{R}^m$ and ${\mathrm{R}}_{{\mathcal{G}}}={\mathrm{R}}_{{\mathcal{G}}}^a$, then Definition \ref{UnifContF} and Definition \ref{(R,B) uniformly continuous} are equivalent. 

The next theorem gives a useful consequence for functions that are compactly $({\mathrm{R}}_{{\mathcal{G}}},{\mathcal B})$-multi-almost automorphic. Its proof is inspired by \cite[Lemma 3.3]{chavez2024compact}. The theorem is used to prove, for instance, the uniform continuity of functions defined by integrals with kernels depending on two variables (see for example Theorems  \ref{NewThmappl} and \ref{InvConv1}).

\begin{tw}\label{ThRKAA}
Let  $F : {\mathbb R}^{m} \times \mathcal{X} \rightarrow \mathcal{Y}$ be bounded and continuous. If $F$ is compactly $({\mathrm{R}}_{{\mathcal{G}}},{\mathcal B})$-multi-almost automorphic, then $F$ is $({\mathrm{R}}_{{\mathcal{G}}},{\mathcal B})$-multi-almost automorphic and $({\mathrm{R}}_{{\mathcal{G}}},{\mathcal B})$-uniformly continuous.
\end{tw}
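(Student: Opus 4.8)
The plan is to prove the two conclusions separately, in both cases passing to subsequences via a diagonal argument on a countable dense set and then bootstrapping to all of $\mathbb{R}^m$ using the compact-convergence hypothesis. First I would show that $F$ is $({\mathrm{R}}_{{\mathcal{G}}},{\mathcal B})$-multi-almost automorphic: this is immediate, since uniform convergence on compact subsets of $\mathbb{R}^m$ implies pointwise convergence, so the limits \eqref{love12345678} and \eqref{love123456789} hold in the pointwise sense and the defining conditions of Definition \ref{eovako} are satisfied by the very same subsequence and limit function $F^*$.

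The substantive part is $({\mathrm{R}}_{{\mathcal{G}}},{\mathcal B})$-uniform continuity. Fix $B\in\mathcal B$ and two sequences $({\bf a}_k),({\bf b}_k)\in{\mathrm{R}}_{{\mathcal{G}}}$ with $\|{\bf a}_k-{\bf b}_k\|\to 0$. Arguing by contradiction, suppose uniform continuity fails: there are $\epsilon_0>0$, a point ${\bf t}_0\in\mathbb{R}^m$, a subsequence (not relabeled) and points $x_k\in B$ with $\|F({\bf t}_0+{\bf a}_k;x_k)-F({\bf t}_0+{\bf b}_k;x_k)\|_{\mathcal{Y}}\ge\epsilon_0$ for all $k$. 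Now apply the compact $({\mathrm{R}}_{{\mathcal{G}}},{\mathcal B})$-multi-almost automorphy of $F$ to the sequence $({\bf b}_k)$ (and the set $B$): extract a further subsequence $({\bf b}_{k_l})$ and a limit $F^*$ so that $F(\cdot+{\bf b}_{k_l};\cdot)\to F^*(\cdot;\cdot)$ uniformly on $\mathcal{K}\times B$ for every compact $\mathcal{K}\subset\mathbb{R}^m$. Writing ${\bf a}_{k_l}={\bf b}_{k_l}+({\bf a}_{k_l}-{\bf b}_{k_l})$ and choosing the compact set $\mathcal{K}$ to be a closed ball around ${\bf t}_0$ large enough to contain ${\bf t}_0+({\bf a}_{k_l}-{\bf b}_{k_l})$ for all large $l$, one estimates
\[
\bigl\|F({\bf t}_0+{\bf a}_{k_l};x_{k_l})-F({\bf t}_0+{\bf b}_{k_l};x_{k_l})\bigr\|_{\mathcal{Y}}
\le \bigl\|F\bigl(({\bf t}_0+{\bf a}_{k_l}-{\bf b}_{k_l})+{\bf b}_{k_l};x_{k_l}\bigr)-F^*({\bf t}_0+{\bf a}_{k_l}-{\bf b}_{k_l};x_{k_l})\bigr\|_{\mathcal{Y}}
\]
\[
\qquad +\ \bigl\|F^*({\bf t}_0+{\bf a}_{k_l}-{\bf b}_{k_l};x_{k_l})-F^*({\bf t}_0;x_{k_l})\bigr\|_{\mathcal{Y}}
+\ \bigl\|F^*({\bf t}_0;x_{k_l})-F({\bf t}_0+{\bf b}_{k_l};x_{k_l})\bigr\|_{\mathcal{Y}}.
\]
The first and third terms tend to $0$ by uniform convergence on $\mathcal{K}\times B$ (note ${\bf t}_0+{\bf a}_{k_l}-{\bf b}_{k_l}\in\mathcal{K}$ and ${\bf t}_0\in\mathcal{K}$). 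For the middle term I first need to know that $F^*$ is itself uniformly continuous in ${\bf t}$, uniformly for $x\in B$, together with ${\bf t}_0+{\bf a}_{k_l}-{\bf b}_{k_l}\to{\bf t}_0$; then it too tends to $0$, contradicting the lower bound $\epsilon_0$.

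The main obstacle is thus establishing that the limit function $F^*$ arising in the compact-convergence definition is uniformly continuous in its first variable, uniformly for $x\in B$. This is where I would invoke the bounded- and-continuous hypothesis on $F$ in a more essential way, following the idea of \cite[Lemma 3.3]{chavez2024compact}: a uniform-on-compacta limit of translates of a bounded continuous function that is compactly $({\mathrm{R}}_{{\mathcal{G}}},{\mathcal B})$-multi-almost automorphic inherits a modulus-of-continuity bound. Concretely, for any compact $\mathcal{K}$ and $\delta>0$ one controls $\sup_{x\in B}\sup_{\|{\bf t}-{\bf s}\|<\delta,\,{\bf t},{\bf s}\in\mathcal{K}}\|F^*({\bf t};x)-F^*({\bf s};x)\|_{\mathcal{Y}}$ by passing through the approximations $F(\cdot+{\bf b}_{k_l};\cdot)$ on a slightly enlarged compact set and using that each $F(\cdot+{\bf b}_{k_l};\cdot)$ shares the modulus of continuity of $F$ on compacta; letting $\delta\to 0$ gives equicontinuity of the family, hence uniform continuity of $F^*$ on each compact set with a modulus independent of $x\in B$. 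Since in the displayed three-term estimate the arguments ${\bf t}_0$ and ${\bf t}_0+{\bf a}_{k_l}-{\bf b}_{k_l}$ all lie in one fixed compact ball, this local uniform continuity of $F^*$ is exactly what is needed, and the proof closes. Finally, I would remark that the pointwise ${\bf t}$-dependence in Definition \ref{(R,B) uniformly continuous} means no global modulus for $F$ is required, which keeps the argument within the stated hypotheses.
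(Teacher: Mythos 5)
Your overall strategy is the same as the paper's: the first conclusion is immediate from ``uniform on compacta implies pointwise,'' and for the second you use exactly the paper's three--term splitting
\[
\bigl\|F({\bf t}_0+{\bf a}_{k_l};x)-F({\bf t}_0+{\bf b}_{k_l};x)\bigr\|
\le \sup_{{\bf z}\in \mathcal{K}}\bigl\|F({\bf z}+{\bf b}_{k_l};x)-F^{\ast}({\bf z};x)\bigr\|
+\bigl\|F({\bf t}_0+{\bf b}_{k_l};x)-F^{\ast}({\bf t}_0;x)\bigr\|
+\bigl\|F^{\ast}({\bf t}_0+{\bf c}_{k_l};x)-F^{\ast}({\bf t}_0;x)\bigr\|
\]
with ${\bf c}_{k_l}={\bf a}_{k_l}-{\bf b}_{k_l}$ and $\mathcal{K}$ a fixed closed ball containing all the points ${\bf t}_0+{\bf c}_{k_l}$; the first two terms die by compact multi--almost automorphy, and everything hinges on the third. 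Your contradiction framing (an $\epsilon_0$ and points $x_{k}\in B$) versus the paper's ``every convergent subsequence of the bounded sequence $\alpha_k$ tends to $0$'' is only a cosmetic difference.

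The genuine problem is your justification of the third term. You propose to prove that $F^{\ast}$ has a modulus of continuity on $\mathcal{K}$, uniform in $x\in B$, by arguing that each translate $F(\cdot+{\bf b}_{k_l};\cdot)$ ``shares the modulus of continuity of $F$ on compacta.'' That inference is false under the stated hypotheses: $F$ is only bounded and continuous, not uniformly continuous, and the modulus of continuity of $F(\cdot+{\bf b}_{k_l};x)$ on $\mathcal{K}$ is that of $F$ on the translated set $\mathcal{K}+{\bf b}_{k_l}$, which escapes to infinity with $l$ and need not be uniformly controlled (think of $t\mapsto\sin(t^2)$: its translates restricted to $[0,1]$ are not equicontinuous). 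So the family of translates is not equicontinuous in general, and this route to the key sub-lemma collapses; indeed, if it worked it would essentially prove that $F$ itself is uniformly continuous, which is not part of the conclusion. The repair is cheaper than what you attempt: for the middle term one only needs that ${\bf t}\mapsto F^{\ast}({\bf t};\cdot)$ is continuous at ${\bf t}_0$ (uniformly over $x\in B$), and this follows because on $\mathcal{K}$ the maps ${\bf t}\mapsto F({\bf t}+{\bf b}_{k_l};\cdot)$ converge \emph{uniformly} to ${\bf t}\mapsto F^{\ast}({\bf t};\cdot)$ in the $\sup_{x\in B}$ norm, and a uniform limit of continuous maps is continuous --- no equicontinuity of the approximating family is required. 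This is exactly the continuity of $F^{\ast}$ in the first variable that the paper invokes at the corresponding step. With that substitution your argument closes as intended.
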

\begin{proof}

Since \( F \) is compactly \( ({\mathrm{R}}_{{\mathcal{G}}},{\mathcal{B}}) \)-multi-almost automorphic, it follows that \( F \) is \( ({\mathrm{R}}_{{\mathcal{G}}},{\mathcal{B}}) \)-multi-almost automorphic. Now, let us prove that \( F \) is \( ({\mathrm{R}}_{{\mathcal{G}}},{\mathcal{B}}) \)-uniformly continuous.

Let \( B \) be any set in \( \mathcal{B} \), and let us pick the arbitrary sequences \( {\bf a}_{k} = (a_{k}^{1}, a_{k}^{2}, \cdots , a_{k}^{m}) \) and \( {\bf b}_{k} = (b_{k}^{1}, b_{k}^{2}, \cdots , b_{k}^{m}) \in {\mathrm{R}}_{{\mathcal{G}}} \) which satisfy \( ||{\bf a}_{k} - {\bf b}_{k} || \rightarrow 0 \) as \( k \to \infty \). We must show that, for each \( {\bf t} \in \mathbb{R}^m \):

\begin{align*}
\lim_{k\to \infty}\sup_{x\in B}\| F \bigl({\bf t}+{\bf a}_{k};x\bigr) - F\bigl({\bf t}+{\bf b}_{k};x\bigr) \|_{\mathcal{Y}} = 0\, .
\end{align*}
Let us define for each $({\bf t};x)$ the uniformly bounded sequence 
$$ \alpha_{k}({\bf t};x):= \| F \bigl({\bf t}+{\bf a}_{k};x\bigr) - F\bigl({\bf t}+{\bf b}_{k};x\bigr) \|_{\mathcal{Y}}\, .$$
 We claim that $\alpha_{k}({\bf t},x) \to 0$ as $k\to \infty$. In fact, let $( \alpha_k'({\bf t},x)) \subset (\alpha_k({\bf t},x))$ be a convergent subsequence which converges to $\psi_0({\bf t},x)$, where
$$ \alpha_k'({\bf t},x)= \| F \bigl({\bf t}+{\bf a}_{k}^{'};x\bigr) - F\bigl({\bf t}+{\bf b}_{k}^{'};x\bigr) \|_{\mathcal{Y}}$$
and  $ ({\bf a}_{k}^{'}) \subset ({\bf a}_{k})$, $({\bf b}_{k}^{'}) \subset ({\bf b}_{k})$ are subsequences coming from the definition of $ \alpha_k'({\bf t},x)$. 

Since $F$ is compactly $({\mathrm{R}}_{{\mathcal{G}}},{\mathcal B})$-multi-almost automorphic, there exist a subsequence $({\bf b}_{k_{l}}=(b_{k_{l}}^{1},b_{k_{l}}^{2},\cdot \cdot\cdot , b_{k_{l}}^{m}))$ of $({\bf b}_{k}^{'})$ (also a subsequence $({\bf a}_{k_{l}})$ of $({\bf a}_{k}^{'})$) and a continuous (in the first variable) function
$F^{\ast} : {\mathbb R}^{m} \times \mathcal{X} \rightarrow \mathcal{Y}$ such that the following limits hold

\begin{align}\label{love12345678}
\lim_{l\rightarrow +\infty} \sup_{t\in \mathcal{E}} \| F\bigl({\bf t} +{\bf b}_{k_{l}};x\bigr)-F^{\ast}({\bf t};x) \|
\end{align}
and
\begin{align}\label{love123456789}
\lim_{l\rightarrow +\infty} \sup_{t\in \mathcal{E}} \| F^{\ast}\bigl({\bf t} -{\bf b}_{k_{l}};x\bigr)-F({\bf t};x)\|,
\end{align}
which are also uniformly in $x\in B$ and, $\mathcal{E}$ is any compact subset of $\mathbb{R}^m$ 

On the other hand, because  $(||{\bf a}_{k}-{\bf b}_{k}||)$ is bounded, there exists $R>0$ such that for all $l\in \mathbb{
N}$, ${\bf c}_{k_{l}}:={\bf a}_{k_{l}}-{\bf b}_{k_{l}} \in \overline{ B(0,R)}$ (the closed ball with center $0$ and radius $R$). Therefore, we have:
\begin{align*}
\| F \bigl({\bf t}+{\bf a}_{k_{l}};x\bigr) &- F\bigl({\bf t}+{\bf b}_{k_{l}};x\bigr) \|_{\mathcal{Y}} \leq \\
&\leq  \| F \bigl({\bf t}+{\bf c}_{k_{l}}+{\bf b}_{k_{l}};x\bigr) - F^{\ast}\bigl({\bf t}+{\bf c}_{k_{l}};x\bigr) \|_{\mathcal{Y}}  +\\ 
&+ \|F \bigl({\bf t}+{\bf b}_{k_{l}};x\bigr)- F^{\ast}\bigl({\bf t};x\bigr)  \|_{\mathcal{Y}} +\|F^{\ast} \bigl({\bf t};x\bigr)- F^{\ast}\bigl({\bf t}+{\bf c}_{k_{l}} ;x\bigr)  \|_{\mathcal{Y}} \\
&\leq \sup_{{\bf z}\in \overline{ B(0,R)}+{\bf t}}\| F \bigl({\bf z}+{\bf b}_{k_{l}};x\bigr) - F^{\ast}\bigl({\bf z};x\bigr) \|_{\mathcal{Y}} +\\ 
&+ \|F \bigl({\bf t}+{\bf b}_{k_{l}};x\bigr)- F^{\ast}\bigl({\bf t};x\bigr)  \|_{\mathcal{Y}} +\|F^{\ast} \bigl({\bf t};x\bigr)- F^{\ast}\bigl({\bf t}+{\bf c}_{k_{l}} ;x\bigr)  \|_{\mathcal{Y}}  .
\end{align*}

Now, since $F(\cdot,\cdot)$ is compactly $({\mathrm{R}}_{{\mathcal{G}}},{\mathcal B})$-multi-almost automorphy and  $F^{\ast} \bigl(\cdot;x\bigr)$ is continuous, we have (using the previous inequality)
$$\lim_{l \to \infty}\sup_{x\in B} \| F \bigl({\bf t}+{\bf a}_{k_{l}};x\bigr) - F\bigl({\bf t}+{\bf b}_{k_{l}};x\bigr) \|_{\mathcal{Y}}  = 0\, .$$
Therefore, $\psi_0({\bf t},x)=0$. Now, since every convergent subsequence of the bounded sequence $\alpha_{k}({\bf t};x)$ converge towards zero, we conclude 
$$\lim_{k \to \infty} \sup_{x\in B} \| F \bigl({\bf t}+{\bf a}_{k};x\bigr) - F\bigl({\bf t}+{\bf b}_{k};x\bigr) \|_{\mathcal{Y}} =0\, .$$

\end{proof}


Of course, as in the classical situation, we have the next two results

\begin{prop}\label{prou}
    Let $F:\mathbb{R}^m \times\X \to  \mathcal{Y}$ be $({\mathrm{R}}_{{\mathcal{G}}},{\mathcal B})$-multi-almost automorphic and  uniformly continuous in the first variable; then, its limit function $F^*$(see definition \ref{eovako}) is also uniformly continuous in the first variable.
\end{prop}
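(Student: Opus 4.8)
The plan is to show that the limit function $F^*$ inherits the uniform continuity modulus of $F$ by passing the estimate through the pointwise limits defining $F^*$. Concretely, fix $B\in\mathcal{B}$ and $\epsilon>0$, and let $\delta=\delta(\epsilon,B)>0$ be the modulus of uniform continuity of $F$ in the first variable, i.e. $\sup_{x\in B}\|F(\t;x)-F(\s;x)\|_{\mathcal{Y}}<\epsilon$ whenever $\|\t-\s\|<\delta$. Take any $\t,\s\in\mathbb{R}^m$ with $\|\t-\s\|<\delta$. By Definition \ref{eovako}, applied to an arbitrary sequence $({\bf b}_k)\in\mathrm{R}_{\mathcal{G}}$, there is a subsequence $({\bf b}_{k_l})$ such that $F({\bf t}+{\bf b}_{k_l};x)\to F^*({\bf t};x)$ pointwise in ${\bf t}$ and uniformly for $x\in B$, and likewise for $\s$. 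Then for each $x\in B$,
\begin{align*}
\|F^*(\t;x)-F^*(\s;x)\|_{\mathcal{Y}}
&\le \|F^*(\t;x)-F(\t+{\bf b}_{k_l};x)\|_{\mathcal{Y}}\\
&\quad+\|F(\t+{\bf b}_{k_l};x)-F(\s+{\bf b}_{k_l};x)\|_{\mathcal{Y}}\\
&\quad+\|F(\s+{\bf b}_{k_l};x)-F^*(\s;x)\|_{\mathcal{Y}}.
\end{align*}

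The middle term is bounded by $\epsilon$ for every $l$, since $\|(\t+{\bf b}_{k_l})-(\s+{\bf b}_{k_l})\|=\|\t-\s\|<\delta$ and $F$ is uniformly continuous in the first variable uniformly on $B$. The first and third terms tend to $0$ as $l\to\infty$, uniformly for $x\in B$, by the defining limit \eqref{love12345678} (here I use that the convergence $F({\bf t}+{\bf b}_{k_l};x)\to F^*({\bf t};x)$ is uniform for $x\in B$, which is part of the definition). Passing to the limit $l\to\infty$ and taking the supremum over $x\in B$ yields $\sup_{x\in B}\|F^*(\t;x)-F^*(\s;x)\|_{\mathcal{Y}}\le\epsilon$, which is exactly the statement that $F^*$ is uniformly continuous in the first variable, uniformly for $B\in\mathcal{B}$ (the same $\delta$ works).

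There is essentially one point that needs a little care rather than a genuine obstacle: a priori the subsequence $({\bf b}_{k_l})$ extracted in Definition \ref{eovako} — and the associated limit function $F^*$ — may depend on $B$, and even the existence of the limits at the two specific points $\t$ and $\s$ uses pointwise convergence. But since we only need the inequality at the pair $(\t,\s)$ for a fixed $B$, we may simply work with the subsequence and the function $F^*$ furnished for that $B$ by the hypothesis that $F$ is $(\mathrm{R}_{\mathcal{G}},\mathcal{B})$-multi-almost automorphic; the triangle-inequality argument above only invokes convergence at $\t$ and at $\s$, both of which hold. Since $\delta$ depends only on $\epsilon$ and $B$ and not on $\t,\s$, the conclusion follows. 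In the scalar case $F(\t;x)=F(\t)$ the argument is identical with $\mathcal{B}$ trivial.

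Finally, note the argument shows something marginally stronger that is worth recording: $F^*$ admits the \emph{same} modulus of continuity $\delta(\epsilon,B)$ as $F$, so the map $F\mapsto F^*$ does not enlarge the uniform-continuity modulus. This will be convenient when $F^*$ is fed back into further limit extractions (as in the proof of Theorem \ref{ThRKAA}).
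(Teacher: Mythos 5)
Your proof is correct and follows essentially the same route as the paper's: fix the modulus $\delta(\epsilon,B)$ of $F$, insert $F(\cdot+{\bf b}_{k_l};x)$ via the triangle inequality, bound the middle term by the uniform continuity of $F$, and let the outer terms vanish using the (uniform in $x\in B$) convergence from Definition \ref{eovako}. The only cosmetic difference is that the paper uses $\epsilon/2$ to land on a strict inequality, whereas you obtain $\le\epsilon$, which is equally sufficient.
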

\begin{proof}
Let $(b_k) \in {\mathrm{R}}_{{\mathcal{G}}}$ and $B\in \mathcal{B}$, since $F$ is $({\mathrm{R}}_{{\mathcal{G}}},{\mathcal B})$-multi-almost automorphic there exist a subsequence $(b_{k_l})\subset(b_k)$ and a function $F^*:\mathbb{R}^m \times \X \to \mathcal{Y}$ such that the following limits, point-wise in ${\bf t}$ and uniformly in $x\in B$, hold
$$\lim_{l\rightarrow\infty}{F(\t+b_{k_l};x)=F^*(\t;x)}\, ,$$
$$\lim_{l\rightarrow\infty}{F^*(\t-b_{k_l};x)=F(\t;x)}.$$
Now, let $\epsilon > 0$, since $F$ is uniformly continuous in the first variable, there exists $\delta > 0$ such that if $\t,\s \in \mathbb{R}^m$ with $||\t-\s||<\delta$, then $\sup_{x\in B}||F(\t;x) - F(\s ;x)||_{\mathcal{Y}}<\epsilon/2$. Therefore, taking the limit when $l\rightarrow\infty$ in the following inequality
\begin{eqnarray*}
    ||F^*(\t;x) - F^*(\s;x)||_{\mathcal{Y}} &\leq & || F^*(\t;x) - F(\t+b_{k_l};x)||_{\mathcal{Y}} + ||F(\t+b_{k_l};x) - F(\s + b_{k_l};x)||_{\mathcal{Y}} +\\
    &+& ||F(\s + b_{k_l};x)-F^*(\s ;x)||_{\mathcal{Y}}\, ,
\end{eqnarray*}
we have:
$$\sup_{x\in B}||F^*(\t,x)-F^*(\s,x)||_{\mathcal{Y}} < \epsilon\, ,$$
provided that $\t,\s \in \mathbb{R}^m$ with $||\t-\s||<\delta$. Thus, $F^*$ is uniformly continuous in the first variable.
\end{proof}

The proof of the following proposition is analogous to that of the previous one, so we omit its proof.
\begin{prop}
Suppose that \( F : {\mathbb R}^{m} \times \mathcal{X} \rightarrow \mathcal{Y} \) is an \(({\mathrm{R}}_{{\mathcal{G}}}, {\mathcal{B}})\)-multi-almost automorphic function with limit function \( F^* \) (see Definition \ref{eovako}). If for every \( B \in {\mathcal{B}} \) there exists a real number \( L_{B} > 0 \) such that, for every \( x \in B \) and \( {\bf s}, {\bf t} \in {\mathbb R}^{m} \), we have
\[
\bigl\| F({\bf s}; x) - F({\bf t}; x) \bigr\|_{\mathcal{Y}} \leq L_{B} \|{\bf s} - {\bf t}\|_{\mathcal{X}},
\]
then, we also have 
\[
\bigl\| F^*({\bf s}; x) - F^*({\bf t}; x) \bigr\|_{\mathcal{Y}} \leq L_{B} \|{\bf s} - {\bf t}\| , 
\]
for every $x\in B$ and ${\bf s}, {\bf t} \in \mathbb{R}^m$.
\end{prop}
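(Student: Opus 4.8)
The plan is to mimic exactly the proof of Proposition~\ref{prou}, replacing the ``$\epsilon$–$\delta$'' uniform-continuity estimate with the global Lipschitz estimate, which is in fact cleaner because no $\delta$ is needed and the constant $L_B$ passes through all limits unchanged. So, first I would fix $B\in{\mathcal B}$ and an arbitrary sequence $({\bf b}_k)\in{\mathrm{R}}_{{\mathcal{G}}}$. Invoking that $F$ is $({\mathrm{R}}_{{\mathcal{G}}},{\mathcal B})$-multi-almost automorphic, extract a subsequence $({\bf b}_{k_l})$ of $({\bf b}_k)$ and the limit function $F^{\ast}$ for which, pointwise in ${\bf t}\in\mathbb{R}^m$ and uniformly in $x\in B$,
\[
\lim_{l\to\infty}F({\bf t}+{\bf b}_{k_l};x)=F^{\ast}({\bf t};x),\qquad
\lim_{l\to\infty}F^{\ast}({\bf t}-{\bf b}_{k_l};x)=F({\bf t};x).
\]

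Next I would fix $x\in B$ and ${\bf s},{\bf t}\in\mathbb{R}^m$ and write the triangle-inequality decomposition
\[
\bigl\|F^{\ast}({\bf s};x)-F^{\ast}({\bf t};x)\bigr\|_{\mathcal{Y}}
\le \bigl\|F^{\ast}({\bf s};x)-F({\bf s}+{\bf b}_{k_l};x)\bigr\|_{\mathcal{Y}}
+\bigl\|F({\bf s}+{\bf b}_{k_l};x)-F({\bf t}+{\bf b}_{k_l};x)\bigr\|_{\mathcal{Y}}
+\bigl\|F({\bf t}+{\bf b}_{k_l};x)-F^{\ast}({\bf t};x)\bigr\|_{\mathcal{Y}}.
\]
The middle term is bounded by $L_B\,\|({\bf s}+{\bf b}_{k_l})-({\bf t}+{\bf b}_{k_l})\|=L_B\,\|{\bf s}-{\bf t}\|$ by the hypothesis on $F$ (note the translation by ${\bf b}_{k_l}$ cancels, so this bound is uniform in $l$). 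The first and third terms tend to $0$ as $l\to\infty$ by the defining limits for $F^{\ast}$, applied at the fixed points ${\bf s}$ and ${\bf t}$ respectively. Letting $l\to\infty$ therefore yields $\|F^{\ast}({\bf s};x)-F^{\ast}({\bf t};x)\|_{\mathcal{Y}}\le L_B\,\|{\bf s}-{\bf t}\|$, and since $x\in B$ and ${\bf s},{\bf t}\in\mathbb{R}^m$ were arbitrary, the claim follows.

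There is essentially no obstacle here: the argument is a verbatim adaptation of Proposition~\ref{prou}, which is why the authors omit it. The only point worth a word of care is that the limit function $F^{\ast}$ obtained depends on the chosen sequence $({\bf b}_k)$, but the conclusion is a statement about $F^{\ast}$ alone once it has been produced by Definition~\ref{eovako}, so this causes no difficulty; the estimate holds for whichever limit function arises. One could also remark that the conclusion combined with, say, the Arzelà–Ascoli-type reasoning used elsewhere would give compactness of translates, but that is not needed for the statement itself.
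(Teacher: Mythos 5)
Your proof is correct and follows exactly the route the paper intends: the authors omit the proof, stating only that it is analogous to Proposition \ref{prou}, and your argument is precisely that adaptation, with the Lipschitz bound (invariant under the translation by ${\bf b}_{k_l}$) replacing the $\epsilon$--$\delta$ estimate in the middle term of the triangle-inequality decomposition. Nothing further is needed.
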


\noindent Following the work \cite{chavez2022multi}, we have:
\begin{tw}\label{Follprev}
The following properties hold
\begin{enumerate}
\item The space of bounded and ${\mathrm{R}}_{{\mathcal{G}}}$-multi-almost automorphic functions is a Banach space under the supremum norm.

\item  Suppose that $F : {\mathbb R}^{m} \times \mathcal{X} \rightarrow \mathcal{Y}$ is $(\mathcal{G},{\mathcal B})$-multi-almost automorphic, where 
 ${\mathcal B}$ denotes any collection of compact subsets of $\mathcal{X}.$ If for every $B\in {\mathcal B}$ there exists a real number $L_{B}>0$ such that, for every $x,\ y\in B$ and ${\bf t}\in {\mathbb R}^{m},$
we have
\begin{align}\label{elbe}
\bigl\| F({\bf t};x) -F({\bf t} ;y)\bigr\|_{\mathcal{Y}}\leq L_{B}\|x-y\|_{\mathcal{X}} ;
\end{align} 
then, for every set $B\in {\mathcal B},$ the set $\{F({\bf t},x) : {\bf t}\in \mathcal{G},\ x\in B\}$ is relatively compact in $\mathcal{Y}.$
\end{enumerate}
\end{tw}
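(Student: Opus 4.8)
The statement bundles two essentially independent facts, and I would prove them separately.

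\textbf{Part (1).} The plan is first to check that the bounded $\mathrm{R}_{\mathcal{G}}$-multi-almost automorphic functions form a linear subspace of the Banach space $BC(\mathbb{R}^m;\mathcal{Y})$ of bounded continuous functions with the sup norm, and then that this subspace is closed. Linearity is routine: given $F,G$ bounded $\mathrm{R}_{\mathcal{G}}$-multi-almost automorphic and $(\mathbf{b}_k)\in\mathrm{R}_{\mathcal{G}}$, I would extract a subsequence for $F$ (yielding $F^{\ast}$), refine it for $G$ (yielding $G^{\ast}$), and along the common subsequence obtain $(\alpha F+\beta G)(\cdot+\mathbf{b}_{k_l})\to\alpha F^{\ast}+\beta G^{\ast}$ and $(\alpha F^{\ast}+\beta G^{\ast})(\cdot-\mathbf{b}_{k_l})\to\alpha F+\beta G$ pointwise, boundedness being clear. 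For closedness, let $F_n\to F$ uniformly with each $F_n$ bounded $\mathrm{R}_{\mathcal{G}}$-multi-almost automorphic, so that $F\in BC(\mathbb{R}^m;\mathcal{Y})$. Fixing $(\mathbf{b}_k)\in\mathrm{R}_{\mathcal{G}}$, I would run the usual diagonal extraction, producing nested subsequences $(\mathbf{b}_k^{(n)})$ along which $F_n(\cdot+\mathbf{b}_k^{(n)})\to F_n^{\ast}$ and $F_n^{\ast}(\cdot-\mathbf{b}_k^{(n)})\to F_n$ pointwise, and set $\mathbf{b}_k^{\ast}:=\mathbf{b}_k^{(k)}$. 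A $3\epsilon$-estimate splitting $\|F(\mathbf{t}+\mathbf{b}_l^{\ast})-F(\mathbf{t}+\mathbf{b}_{l'}^{\ast})\|_{\mathcal{Y}}$ through some $F_n$ (outer terms controlled by $\|F-F_n\|_\infty$, inner term by Cauchyness of $(F_n(\mathbf{t}+\mathbf{b}_l^{\ast}))_l$) shows that $F^{\ast}(\mathbf{t}):=\lim_l F(\mathbf{t}+\mathbf{b}_l^{\ast})$ exists for every $\mathbf{t}$; the same estimate gives $\|F_n^{\ast}-F^{\ast}\|_\infty\le\|F_n-F\|_\infty\to 0$, and a second $3\epsilon$-argument gives $F^{\ast}(\mathbf{t}-\mathbf{b}_l^{\ast})\to F(\mathbf{t})$ pointwise. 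Hence $F$ is $\mathrm{R}_{\mathcal{G}}$-multi-almost automorphic, the subspace is closed in $BC(\mathbb{R}^m;\mathcal{Y})$, and therefore complete.

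\textbf{Part (2).} Here I would use the sequential characterization of relative compactness in the complete space $\mathcal{Y}$: it is enough to show that every sequence drawn from $S:=\{F(\mathbf{t};x):\mathbf{t}\in\mathcal{G},\ x\in B\}$ has a subsequence converging in $\mathcal{Y}$. Take $\mathbf{t}_k\in\mathcal{G}$ and $x_k\in B$. Since $B$ is compact, pass to a subsequence with $x_{k_j}\to x_0\in B$; the Lipschitz hypothesis \eqref{elbe} then gives
$$\bigl\|F(\mathbf{t}_{k_j};x_{k_j})-F(\mathbf{t}_{k_j};x_0)\bigr\|_{\mathcal{Y}}\le L_{B}\,\|x_{k_j}-x_0\|_{\mathcal{X}}\longrightarrow 0,$$
so it suffices to extract a convergent subsequence of $\bigl(F(\mathbf{t}_{k_j};x_0)\bigr)_j$. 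Now $(\mathbf{t}_{k_j})$ is a sequence in $\mathcal{G}$, hence lies in $\mathrm{R}_{\mathcal{G}}^a$; since $F$ is $(\mathcal{G},\mathcal{B})$-multi-almost automorphic and $x_0\in B\in\mathcal{B}$, Definition \ref{eovako} provides a subsequence $(\mathbf{t}_{k_{j_i}})$ and a function $F^{\ast}$ with $F(\mathbf{t}+\mathbf{t}_{k_{j_i}};x)\to F^{\ast}(\mathbf{t};x)$ pointwise in $\mathbf{t}$. Evaluating at $\mathbf{t}=\mathbf{0}$ yields $F(\mathbf{t}_{k_{j_i}};x_0)\to F^{\ast}(\mathbf{0};x_0)\in\mathcal{Y}$, and combining this with the displayed estimate gives $F(\mathbf{t}_{k_{j_i}};x_{k_{j_i}})\to F^{\ast}(\mathbf{0};x_0)$. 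Thus every sequence in $S$ has a convergent subsequence, so $S$ is relatively compact in $\mathcal{Y}$.

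\textbf{Where the work is.} In part (1) the only delicate point is keeping the diagonal argument honest when the required convergence is pointwise on all of $\mathbb{R}^m$ while only countably many $F_n$ are being diagonalized; the $3\epsilon$-estimates threaded through the $F_n$ handle this with no separability assumption. In part (2), the one thing to be alert to is that the members of $\mathcal{B}$ are assumed \emph{compact}, not merely bounded — this is exactly what lets $x_0$ be trapped inside a set of $\mathcal{B}$ and the Lipschitz bound be invoked — after which reading the definition of multi-almost automorphy at $\mathbf{t}=\mathbf{0}$ already furnishes the convergent subsequence; no deeper structural result than Definition \ref{eovako} is needed.
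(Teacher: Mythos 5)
Your two arguments are correct and coincide with the standard proofs that the paper itself defers to: completeness of almost-automorphic-type spaces via the diagonal extraction and a $3\epsilon$-estimate (the ``classical'' proof cited from \cite{CHAKPPINTO2023}), and relative compactness of the range by combining compactness of $B$, the Lipschitz bound \eqref{elbe}, and evaluation of the almost-automorphy limit at $\mathbf{t}=\mathbf{0}$, exactly as in \cite[Proposition 2.5-(i)]{chavez2022multi}. One caveat concerning part (1): both your linearity step (refining the subsequence obtained for $F$ so that it also works for $G$) and the diagonal construction (applying the almost automorphy of $F_{n+1}$ to the subsequence $(\mathbf{b}_k^{(n)})$ produced for $F_n$) implicitly feed \emph{subsequences} of a member of $\mathrm{R}_{\mathcal{G}}$ back into Definition \ref{eovako00}. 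For a general collection $\mathrm{R}_{\mathcal{G}}$ this is not licensed, since $\mathrm{R}_{\mathcal{G}}$ need not contain the subsequences of its elements — which is precisely why the paper adds the hypothesis that $\mathrm{R}_{\mathcal{G}}$ be closed under taking subsequences in Theorems \ref{TeoComb} and \ref{NewThmappl}. This is a defect of the statement as much as of your proof (it disappears when $\mathrm{R}_{\mathcal{G}}=\mathrm{R}_{\mathcal{G}}^a$, e.g.\ in the $\mathcal{G}$-multi-almost automorphic setting), but it should be flagged or assumed away. Part (2) is unaffected, since there one works with $\mathrm{R}_{\mathcal{G}}^a$, the collection of \emph{all} sequences in $\mathcal{G}$, and your use of the compactness of $B\in\mathcal{B}$ to trap $x_0$ inside $B$ before invoking \eqref{elbe} is exactly the point of the hypothesis.
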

In Theorem \ref{Follprev}, the proof of part {\it 1} is analogous to the classical version of almost automorphy, see for instance \cite{CHAKPPINTO2023}, while the proof of item {\it 2} is analogous to \cite[Proposition 2.5-(i)]{chavez2022multi}.

With respect to the previous proposition, we clarify the following: in item {\it 1}, the completeness is of the space \( BC(\mathbb{R}^m; \mathcal{X}) \cap \mathrm{R}_{\mathcal{G}} A A(\mathbb{R}^m; \mathcal{X}) \) and not just of the  space \( \mathrm{R}_{\mathcal{G}} A A(\mathbb{R}^m; \mathcal{X})  \), this is due to the fact that, in general, the elements of \( \mathrm{R}_{\mathcal{G}} A A(\mathbb{R}^m; \mathcal{X}) \) are not bounded (see the examples presented in  \cite{chavez2022multi}). With respect to item {\it 2}, we clarify that we are working with all the sequences in \( \mathcal{G}\), so that when \( \mathcal{G} = \mathbb{R}^m \), we are dealing with the classical situation of almost automorphy, as in \cite[Proposition 2.5-(i)]{chavez2022multi}. In the next section we prove that if \( \mathcal{G} \) is a dense subgroup of \( \mathbb{R}^m \), then every function in \( \mathcal{G} A A(\mathbb{R}^m; \mathcal{X}) \) is bounded  and has relatively compact range. 

\section{$\mathcal{G}$-multi-almost automorphic functions, where $\mathcal{G}$ is a dense subgroup of $\mathbb{R}^m$}\label{section dense} 

As quoted in the previous section, we know that an $({\mathrm R}_{\mathcal{G}},{\mathcal B})$-multi-almost automorphic function is not necessarily bounded and, therefore, does not have a relatively compact range. In the next theorem, we overcome this situation by requiring $\mathcal{G}$ to be a dense subgroup of $\mathbb{R}^m$.
\begin{tw}\label{teo1}
Let $\mathcal{G} \subset \mathbb{R}^m$ be a dense subgroup and $f \in \mathcal{G}AA(\mathbb{R}^m ; \mathcal{X})$, then:
\begin{enumerate}
\item $f$ is bounded.
\item $f$ has relatively compact range. Thus, $\mathcal{R}(f):=\{f({\bf t}) : {\bf t}\in \mathbb{R}^m\}$ is relatively compact in $\mathcal{X}$.
\end{enumerate}
\end{tw}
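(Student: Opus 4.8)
The plan is to exploit the density of $\mathcal{G}$ together with the defining property of $\mathcal{G}$-multi-almost automorphy (all sequences in $\mathcal{G}$ are admissible) to transfer the classical boundedness argument for scalar almost automorphic functions to this setting. First I would prove part (1) by contradiction: suppose $f$ is unbounded, so there is a sequence $(\mathbf{s}_k) \subset \mathbb{R}^m$ with $\|f(\mathbf{s}_k)\|_{\mathcal{X}} \to \infty$. Since $\mathcal{G}$ is dense in $\mathbb{R}^m$ and $f$ is continuous, for each $k$ I can pick $\mathbf{b}_k \in \mathcal{G}$ with $\|\mathbf{b}_k - \mathbf{s}_k\|$ so small that $\|f(\mathbf{b}_k)\|_{\mathcal{X}} \to \infty$ as well (using continuity of $f$ at $\mathbf{s}_k$, a pointwise-at-a-time choice suffices; no uniform continuity is needed). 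Now $(\mathbf{b}_k) \in \mathrm{R}_{\mathcal{G}}^a$, so by Definition \ref{eovako00} there is a subsequence $(\mathbf{b}_{k_l})$ and a limit function $f^*$ with $f(\mathbf{t} + \mathbf{b}_{k_l}) \to f^*(\mathbf{t})$ pointwise; evaluating at $\mathbf{t} = 0$ gives $f(\mathbf{b}_{k_l}) \to f^*(0)$ in $\mathcal{X}$, so $(\|f(\mathbf{b}_{k_l})\|_{\mathcal{X}})$ is convergent, hence bounded — contradicting $\|f(\mathbf{b}_{k_l})\|_{\mathcal{X}} \to \infty$. Therefore $f$ is bounded.

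For part (2), the goal is relative compactness of $\mathcal{R}(f)$ in $\mathcal{X}$. Since $\mathcal{G}$ is dense and $f$ is continuous, $\overline{\mathcal{R}(f)} = \overline{\{f(\mathbf{t}) : \mathbf{t} \in \mathcal{G}\}}$, so it suffices to show every sequence $(f(\mathbf{t}_k))$ with $\mathbf{t}_k \in \mathcal{G}$ has a convergent subsequence. Writing $\mathbf{b}_k := \mathbf{t}_k \in \mathcal{G}$, the sequence $(\mathbf{b}_k)$ lies in $\mathrm{R}_{\mathcal{G}}^a$; applying Definition \ref{eovako00} again produces a subsequence $(\mathbf{b}_{k_l})$ and a function $f^*$ with $f(\mathbf{t} + \mathbf{b}_{k_l}) \to f^*(\mathbf{t})$ pointwise, and in particular at $\mathbf{t} = 0$ we get $f(\mathbf{b}_{k_l}) = f(\mathbf{t}_{k_l}) \to f^*(0) \in \mathcal{X}$. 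This shows the arbitrary sequence from $\{f(\mathbf{t}) : \mathbf{t} \in \mathcal{G}\}$ has a convergent subsequence, so that set — and hence its closure $\overline{\mathcal{R}(f)}$ — is sequentially compact, i.e. compact, in the metric space $\mathcal{X}$. Thus $\mathcal{R}(f)$ is relatively compact.

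The key structural point, used in both parts, is that because $\mathcal{G}$-multi-almost automorphy allows \emph{every} sequence from $\mathcal{G}$ (this is the $\mathrm{R}_{\mathcal{G}} = \mathrm{R}_{\mathcal{G}}^a$ case), we may feed in sequences of our own choosing and read off convergence of $f$ along them by evaluating the pointwise limit at $\mathbf{t} = 0$; density of $\mathcal{G}$ is exactly what lets us reach arbitrary points of $\mathbb{R}^m$ (for unboundedness in part (1)) or reduce the range to values over $\mathcal{G}$ (in part (2)). I do not anticipate a serious obstacle: the only mild care needed is in part (1), where one must choose $\mathbf{b}_k \in \mathcal{G}$ near $\mathbf{s}_k$ using only continuity of $f$ at each individual point $\mathbf{s}_k$ — this is legitimate since the approximation is done one index at a time, and no global modulus of continuity is invoked. (If one prefers, the group structure of $\mathcal{G}$ is not even needed here, only density; the subgroup hypothesis is presumably retained for consistency with later sections.)
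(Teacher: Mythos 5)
Your proof is correct and follows essentially the same route as the paper: reduce to points of $\mathcal{G}$ via density and continuity, then feed a chosen sequence from $\mathcal{G}$ into the definition of $\mathcal{G}$-multi-almost automorphy and evaluate the pointwise limit at $\mathbf{t}=0$. The paper packages this by restricting $f$ to the group $\mathcal{G}$ and citing its appendix results on sequentially almost automorphic functions on topological groups (Theorems \ref{pro1} and \ref{pro2}), whereas you inline those arguments directly — which, as you correctly observe, shows that only density (not the subgroup structure) is actually used.
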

\begin{proof}
Let us see:
\begin{enumerate}
\item [{\it 1.}] Let $f_r := f|_{\mathcal{G}} : \mathcal{G} \rightarrow \X$ be the restriction of $f$ to $\mathcal{G}$, which is a continuous function. We note that $f_r$ is almost automorphic on $\mathcal{G}$ and, according to Theorem \ref{pro1}, $f_r$ is bounded on $\mathcal{G}$. Consequently, due to the continuity of $f$, the boundedness of $f_r$ and the density of $\mathcal{G}$ in $\mathbb{R}^m$, it follows that $f$ is bounded on $\mathbb{R}^m$.

\item [{\it 2.}] Since $\mathcal{G}$ is dense in $\mathbb{R}^m$, according to Proposition \ref{grp}, we have: 
\begin{equation}\label{Eqnclos}
\overline{f(\mathbb{R}^m)} = \overline{f(\overline{\mathcal{G}})} = \overline{f_r(\mathcal{G})},
\end{equation}
where $f_r$ is the restriction of $f$ to the group $\mathcal{G}$. Moreover, since $f_r: \mathcal{G} \rightarrow \X$ is almost automorphic, by Theorem \ref{pro2}, $f_r$ has a relatively compact range, that is $\overline{f_r(\mathcal{G})}$ is compact in $\mathcal{X}$. Therefore, from (\ref{Eqnclos}), we conclude that $\overline{f(\mathbb{R}^m)}$ is compact in $\mathcal{X}$.
\end{enumerate}
\end{proof}

\begin{cor} Let $\mathcal{G}$ be a dense subgroup of $\mathbb{R}^m$, then
\begin{enumerate}
\item $\mathcal{G}AA(\mathbb{R}^m ; \mathcal{X}) \subset BC(\mathbb{R}^m ; \mathcal{X});$ 
where $BC(\mathbb{R}^m ; \mathcal{X})$ is the space of bounded continuous functions.
\item $\mathcal{G}AA(\mathbb{R}^m ; \X)$ is a Banach space under the supremum norm.
\end{enumerate}
\end{cor}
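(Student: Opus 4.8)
The plan is to read off both items almost directly from results already in the excerpt, the only real input being Theorem \ref{teo1}. For item 1, recall that by definition every element of $\mathcal{G}AA(\mathbb{R}^m;\mathcal{X})$ is continuous (Definition \ref{eovako00}), and Theorem \ref{teo1}(1) tells us that when $\mathcal{G}$ is a dense subgroup, such a function is also bounded. Hence $\mathcal{G}AA(\mathbb{R}^m;\mathcal{X})\subset BC(\mathbb{R}^m;\mathcal{X})$, which is item 1.

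For item 2, I would invoke Theorem \ref{Follprev}(1), specialized to $\mathrm{R}_{\mathcal{G}}=\mathrm{R}_{\mathcal{G}}^a$: the space $BC(\mathbb{R}^m;\mathcal{X})\cap \mathcal{G}AA(\mathbb{R}^m;\mathcal{X})$ is a Banach space under the supremum norm. By item 1 of the present corollary, however, $\mathcal{G}AA(\mathbb{R}^m;\mathcal{X})\subset BC(\mathbb{R}^m;\mathcal{X})$, so that intersection collapses to $\mathcal{G}AA(\mathbb{R}^m;\mathcal{X})$ itself; therefore $\mathcal{G}AA(\mathbb{R}^m;\mathcal{X})$ is a Banach space under $\|\cdot\|_\infty$. (If one prefers a self-contained argument, the alternative is to show directly that $\mathcal{G}AA(\mathbb{R}^m;\mathcal{X})$ is a closed linear subspace of $BC(\mathbb{R}^m;\mathcal{X})$: linearity is routine, and closedness follows by the standard diagonal extraction — given a uniformly convergent sequence $f_n\to f$ with each $f_n\in\mathcal{G}AA$ and a sequence $(\mathbf{b}_k)\subset\mathcal{G}$, extract nested subsequences along which each $f_n$ has pointwise limits $f_n^*$, diagonalize, and use the uniform estimate $\|f(\cdot+\mathbf{b}_{k_l})-f^*\|\le 2\|f-f_n\|_\infty+\|f_n(\cdot+\mathbf{b}_{k_l})-f_n^*\|$ to pass both limits in Definition \ref{eovako00} to $f$.)

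There is essentially no obstacle here; the corollary is a formal consequence of Theorem \ref{teo1} together with Theorem \ref{Follprev}(1). The only point worth flagging is the logical order: one must establish item 1 first (this is where density of $\mathcal{G}$ enters, via Theorem \ref{teo1}), and only then can one identify the completed space $BC(\mathbb{R}^m;\mathcal{X})\cap \mathcal{G}AA(\mathbb{R}^m;\mathcal{X})$ of Theorem \ref{Follprev}(1) with $\mathcal{G}AA(\mathbb{R}^m;\mathcal{X})$. Without the density hypothesis this identification fails, exactly as the discussion following Theorem \ref{Follprev} explains.
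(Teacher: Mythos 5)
Your proposal is correct and is exactly the argument the paper intends: the corollary is stated without proof immediately after Theorem \ref{teo1}, and it follows from Theorem \ref{teo1}(1) (boundedness) together with continuity from Definition \ref{eovako00} for item 1, and from Theorem \ref{Follprev}(1) with the intersection $BC(\mathbb{R}^m;\mathcal{X})\cap\mathcal{G}AA(\mathbb{R}^m;\mathcal{X})$ collapsing via item 1 for item 2. Your remark on the logical order, and on why the density hypothesis is indispensable for the identification, matches the clarification the paper gives after Theorem \ref{Follprev}.
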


In the following result, we establish that if $\mathcal{G}$ is a dense subgroup of $\mathbb{R}^m$, then any $\mathcal{G}$-multi-almost automorphic function which decays asymptotically across $\mathcal{G}$ must be the null function.
\begin{tw}\label{teo6}
Let $\mathcal{G}$ be a dense subgroup of $\mathbb{R}^m$ and $f\in \mathcal{G}AA(\mathbb{R}^m ; \X)$ such that 
$$\displaystyle\lim_{\t \in \mathcal{G},\, ||\t||\rightarrow\infty}{||f(\t)||_{\X}} = 0\, ;$$ 
then, $f \equiv 0$. 
\end{tw}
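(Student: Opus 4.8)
The plan is to reduce the statement to the classical fact that an almost automorphic function on a group which vanishes at infinity is identically zero, applied to the restriction $f_r = f|_{\mathcal{G}}$, and then to use density and continuity of $f$ to conclude that $f \equiv 0$ on all of $\mathbb{R}^m$. First I would observe that, by the same reasoning as in Theorem \ref{teo1}, the restriction $f_r : \mathcal{G} \to \mathcal{X}$ is an almost automorphic function on the (topological) group $\mathcal{G}$. The hypothesis $\lim_{\mathbf{t}\in\mathcal{G},\,\|\mathbf{t}\|\to\infty}\|f(\mathbf{t})\|_{\mathcal{X}} = 0$ is precisely the statement that $f_r$ vanishes at infinity on $\mathcal{G}$.

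Next I would invoke the standard Bochner-type argument for almost automorphic functions that decay at infinity. Fix $\mathbf{t}_0 \in \mathcal{G}$; since $\mathcal{G}$ is an unbounded subgroup (it is dense in $\mathbb{R}^m$, hence not contained in any ball), we may choose a sequence $(\mathbf{b}_k) \subset \mathcal{G}$ with $\|\mathbf{b}_k\| \to \infty$. By almost automorphy of $f_r$ there is a subsequence $(\mathbf{b}_{k_l})$ and a function $f_r^*$ on $\mathcal{G}$ with $f_r(\mathbf{t}+\mathbf{b}_{k_l}) \to f_r^*(\mathbf{t})$ and $f_r^*(\mathbf{t}-\mathbf{b}_{k_l}) \to f_r(\mathbf{t})$ pointwise on $\mathcal{G}$. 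From the decay hypothesis, $\|f_r(\mathbf{t}+\mathbf{b}_{k_l})\|_{\mathcal{X}} \to 0$ as $l\to\infty$ for each fixed $\mathbf{t}$, because $\|\mathbf{t}+\mathbf{b}_{k_l}\|\to\infty$; hence $f_r^* \equiv 0$ on $\mathcal{G}$. But then, applying the second limit, $f_r(\mathbf{t}) = \lim_{l\to\infty} f_r^*(\mathbf{t}-\mathbf{b}_{k_l}) = 0$ for every $\mathbf{t}\in\mathcal{G}$. Thus $f_r \equiv 0$ on $\mathcal{G}$.

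Finally, since $f : \mathbb{R}^m \to \mathcal{X}$ is continuous and vanishes on the dense subset $\mathcal{G}$, it vanishes everywhere on $\mathbb{R}^m$, i.e. $f \equiv 0$. This last step uses only continuity and density, exactly as in the proof of item \emph{1} of Theorem \ref{teo1}.

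The main obstacle — really the only subtle point — is justifying that $f_r$ is genuinely almost automorphic \emph{as a function on the group $\mathcal{G}$} (so that we may extract a subsequence and a limit function adapted to $\mathcal{G}$), and that we may take the translating sequence $(\mathbf{b}_k)$ to lie in $\mathcal{G}$ with $\|\mathbf{b}_k\|\to\infty$. The first is handled because $\mathcal{G}AA(\mathbb{R}^m;\mathcal{X})$ is defined precisely via sequences in $\mathcal{G}$, so restricting to $\mathcal{G}$ preserves the defining property (this is already used implicitly in Theorems \ref{teo1} and the appendix results \ref{pro1}, \ref{pro2}); the second is immediate from density of $\mathcal{G}$ in $\mathbb{R}^m$, which forces $\mathcal{G}$ to be unbounded. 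Everything else is the routine Bochner double-limit manipulation, performed pointwise, with no uniformity needed.
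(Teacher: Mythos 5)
Your proposal is correct and follows essentially the same route as the paper's own proof: restrict to the group $\mathcal{G}$, pick an unbounded sequence in $\mathcal{G}$ (the paper cites Lemma \ref{lem8} for unboundedness, you derive it from density — both are fine), use the decay hypothesis to force the Bochner limit function $f_r^*$ to vanish, recover $f_r \equiv 0$ from the return limit, and conclude by density and continuity. No gaps.
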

\begin{proof}
Since $f\in \mathcal{G} AA(\mathbb{R}^m ;\mathcal{X})$, then its rectriction map $f_r:\mathcal{G}\to \X$ is almost automorphic on the group $\mathcal{G}$. On the other hand, according to Lemma \ref{lem8}, $\mathcal{G}$ is unbounded. Therefore, there exist a sequence $(b_k)_{k\in\mathbb{N}}$ in $\mathcal{G}$ which is unbounded. Now, since $f_r$ is almost automorphic, there exit a subsequence $(b_{k_l})_{l\in\mathbb{N}}\subset (b_k)_{k\in\mathbb{N}}$ and a function $f_r^*:\mathcal{G} \to \mathcal{X}$ such that for each $\t \in \mathcal{G}$, the following limits hold:
\begin{equation}\label{eq1}
    f_r^*(\t) = \lim_{l \rightarrow \infty}{f_r(\t+b_{k_l})}\, ,
\end{equation}
\begin{equation}\label{eq2}
    \lim_{l \rightarrow \infty}{f_r^*(\t-b_{k_l})} = f_r(\t)\, .
\end{equation}
But, $\displaystyle\lim_{\t \in \mathcal{G},\, ||\t||\rightarrow\infty}{||f(\t)||_{\X}} = 0;$ then,  in light of (\ref{eq1}), we have:
$$f_r^*(\t) = 0\, , \t\in \mathcal{G}\, .$$
Now using (\ref{eq2}), we have:
$$f_r(\t)=\lim_{l \rightarrow \infty}{f_r^*(\t-b_{k_l})}=0\, ,  \t\in \mathcal{G}\, . $$
Thus, $f_r \equiv 0$ in $\mathcal{G}$. Now the conclusion follows from the continuity of $f$ and the denseness of the group $\mathcal{G}$. 
\end{proof}

\section{New characterizations of compact almost automorphic functions}\label{section char}

In this section, we present the main results of the present work. We begin with the following well-known characterization of multidimensional compact almost automorphic functions presented in \cite{CHAKPPINTO2023}.
\begin{tw}\label{crt} ({\bf First characterization}) A function $f:\mathbb{R}^{m}\to \mathcal{X}$ is compact almost automorphic if and only if it is almost automorphic and uniformly continuous. Thus,
$$\mathcal{K}AA(\mathbb{R}^m ; \X) = AA(\mathbb{R}^m ; \X) \cap UC(\mathbb{R}^m ; \X) .$$
\end{tw}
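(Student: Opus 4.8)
The plan is to prove both inclusions of the set equality $\mathcal{K}AA(\mathbb{R}^m;\mathcal{X}) = AA(\mathbb{R}^m;\mathcal{X}) \cap UC(\mathbb{R}^m;\mathcal{X})$ directly from Bochner's Definition \ref{dfAA} and the compact-almost-automorphy condition stated immediately after it.

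\textbf{The easy inclusion ($\subseteq$).} Suppose $f$ is compact almost automorphic. By definition the pointwise limits in Definition \ref{dfAA} hold, so $f$ is in particular almost automorphic; it remains to show $f$ is uniformly continuous. The standard argument is by contradiction: if $f$ were not uniformly continuous there would exist $\epsilon_0>0$ and sequences $\mathbf{s}_k,\mathbf{t}_k\in\mathbb{R}^m$ with $\|\mathbf{s}_k-\mathbf{t}_k\|\to 0$ but $\|f(\mathbf{s}_k)-f(\mathbf{t}_k)\|_{\mathcal{X}}\geq\epsilon_0$. Writing $\mathbf{s}_k=\mathbf{t}_k+\mathbf{h}_k$ with $\mathbf{h}_k\to 0$, I would apply compact almost automorphy to the sequence $\mathbf{b}_k:=\mathbf{t}_k$: passing to a subsequence, $f(\cdot+\mathbf{b}_{k_l})\to f^*(\cdot)$ uniformly on the compact set $\mathcal{K}=\{0\}\cup\{\mathbf{h}_{k_l}\}_l$ (which is compact since $\mathbf{h}_k\to 0$). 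Then for large $l$,
\begin{align*}
\|f(\mathbf{s}_{k_l})-f(\mathbf{t}_{k_l})\|_{\mathcal{X}} &\leq \|f(\mathbf{h}_{k_l}+\mathbf{b}_{k_l})-f^*(\mathbf{h}_{k_l})\|_{\mathcal{X}} + \|f^*(\mathbf{h}_{k_l})-f^*(\mathbf{0})\|_{\mathcal{X}} + \|f^*(\mathbf{0})-f(\mathbf{b}_{k_l})\|_{\mathcal{X}},
\end{align*}
and each term tends to $0$ (the first and third by uniform convergence on $\mathcal{K}$, the middle one because $f^*$ is continuous — this continuity itself follows from the uniform-on-compacta convergence of the continuous functions $f(\cdot+\mathbf{b}_{k_l})$). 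This contradicts $\|f(\mathbf{s}_{k_l})-f(\mathbf{t}_{k_l})\|\geq\epsilon_0$.

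\textbf{The harder inclusion ($\supseteq$).} Suppose $f$ is almost automorphic and uniformly continuous. Fix a sequence $\mathbf{b}_k$; almost automorphy gives a subsequence (still denoted $\mathbf{b}_k$) and $f^*$ with $f(\mathbf{t}+\mathbf{b}_k)\to f^*(\mathbf{t})$ and $f^*(\mathbf{t}-\mathbf{b}_k)\to f(\mathbf{t})$ pointwise. I must upgrade the first limit to uniform convergence on an arbitrary compact $\mathcal{K}\subset\mathbb{R}^m$. The key point is that $\{f(\cdot+\mathbf{b}_k)\}_k$ is equicontinuous: since $f$ is uniformly continuous, for $\epsilon>0$ there is $\delta>0$ with $\|f(\mathbf{u})-f(\mathbf{v})\|_{\mathcal{X}}<\epsilon$ whenever $\|\mathbf{u}-\mathbf{v}\|<\delta$, and this $\delta$ works simultaneously for every translate $f(\cdot+\mathbf{b}_k)$. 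A pointwise-convergent, equicontinuous sequence of functions converges uniformly on every compact set (cover $\mathcal{K}$ by finitely many $\delta/2$-balls centered at points $\mathbf{t}_1,\dots,\mathbf{t}_N$, use pointwise convergence at the centers and equicontinuity to control values nearby). By Proposition \ref{prou} (or directly from equicontinuity in the limit), $f^*$ is uniformly continuous as well, so the same equicontinuity argument applied to $\{f^*(\cdot-\mathbf{b}_k)\}_k$ upgrades the second limit to uniform-on-compacta convergence. Hence $f$ is compact almost automorphic.

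\textbf{Main obstacle.} The only subtle point is the ($\supseteq$) direction: one must recognize that uniform continuity of $f$ yields \emph{uniform} equicontinuity of the entire family of translates $\{f(\cdot+\mathbf{b}_k)\}$, and then invoke the classical fact that pointwise convergence plus equicontinuity implies local uniform convergence. A secondary bookkeeping point is ensuring $f^*$ is uniformly continuous so that the second Bochner limit can be handled symmetrically — this is exactly what Proposition \ref{prou} provides. Everything else is routine.
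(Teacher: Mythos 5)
Your proof is correct. Note that the paper does not actually prove Theorem \ref{crt}; it states it as a known result imported from \cite{CHAKPPINTO2023}. Your argument is the standard one for this characterization: the forward direction via uniform convergence on the compact set $\{0\}\cup\{\mathbf{h}_{k_l}\}_l$, and the reverse direction via the observation that uniform continuity of $f$ makes the family of translates $\{f(\cdot+\mathbf{b}_k)\}_k$ equicontinuous, so that pointwise convergence upgrades to uniform convergence on compacta (with Proposition \ref{prou} supplying the uniform continuity of $f^*$ needed to treat the second Bochner limit symmetrically). Both halves are complete as written.
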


The next characterization, where we consider $\mathcal{G}=\mathbb{Z}^m$, is a generalization to the multidimensional setting of \cite[Theorem 3.16]{Alanpiecewise}.


\begin{tw} ({\bf Second characterization}) 
    A function $f: \mathbb{R}^m \rightarrow \X$ is compact almost automorphic if and only if it is $\mathbb{Z}^m$-almost automorphic and uniformly continuous, that is:
    $$\mathcal{K}AA(\mathbb{R}^m ; \X) = \mathbb{Z}^mAA(\mathbb{R}^m ; \X) \cap UC(\mathbb{R}^m ; \X) .$$
\end{tw}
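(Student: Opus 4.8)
The plan is to deduce the second characterization from the first one (Theorem~\ref{crt}), so the whole task reduces to showing that, for a uniformly continuous function $f:\mathbb{R}^m\to\X$, being $\mathbb{Z}^m$-almost automorphic is equivalent to being (classically) almost automorphic. Since the inclusion $AA(\mathbb{R}^m;\X)\subset \mathbb{Z}^mAA(\mathbb{R}^m;\X)$ is immediate (a $\mathbb{Z}^m$-sequence is in particular an $\mathbb{R}^m$-sequence, and pointwise limits are preserved under passing to the smaller collection of sequences), the only non-trivial direction is: if $f$ is $\mathbb{Z}^m$-almost automorphic \emph{and} uniformly continuous, then $f$ is almost automorphic, and in fact one gets compact almost automorphy directly.

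First I would take an arbitrary real sequence $({\bf b}_k)\subset\mathbb{R}^m$ and split each term as ${\bf b}_k={\bf n}_k+{\bf r}_k$, where ${\bf n}_k\in\mathbb{Z}^m$ is the vector of integer parts and ${\bf r}_k\in[0,1)^m$ is the fractional part. By compactness of $[0,1]^m$, pass to a subsequence along which ${\bf r}_k\to{\bf r}_0\in[0,1]^m$. Now apply the $\mathbb{Z}^m$-almost automorphy of $f$ to the integer sequence $({\bf n}_k)$: there is a further subsequence (kept with the same notation) and a function $f^{\ast}$ with $f({\bf t}+{\bf n}_{k_l})\to f^{\ast}({\bf t})$ and $f^{\ast}({\bf t}-{\bf n}_{k_l})\to f({\bf t})$ pointwise. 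The key point is then to upgrade these \emph{pointwise} limits along integer shifts to limits along the real shifts ${\bf b}_{k_l}={\bf n}_{k_l}+{\bf r}_{k_l}$, and simultaneously to uniform-on-compacta convergence; this is exactly where uniform continuity is used. Concretely, write
\begin{align*}
\bigl\| f({\bf t}+{\bf b}_{k_l}) - f^{\ast}({\bf t}+{\bf r}_0) \bigr\|
&\le \bigl\| f({\bf t}+{\bf n}_{k_l}+{\bf r}_{k_l}) - f({\bf t}+{\bf n}_{k_l}+{\bf r}_0) \bigr\| \\
&\quad + \bigl\| f\bigl(({\bf t}+{\bf r}_0)+{\bf n}_{k_l}\bigr) - f^{\ast}({\bf t}+{\bf r}_0) \bigr\|.
\end{align*}
The first term is controlled uniformly in ${\bf t}$ by the modulus of uniform continuity of $f$ evaluated at $\|{\bf r}_{k_l}-{\bf r}_0\|\to0$; the second term tends to $0$ pointwise by the $\mathbb{Z}^m$-limit applied at the point ${\bf t}+{\bf r}_0$. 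A symmetric estimate handles the return limit $f^{\ast}(({\bf t}+{\bf r}_0)-{\bf b}_{k_l})\to f({\bf t})$ after noticing $f^{\ast}$ inherits uniform continuity (this is Proposition~\ref{prou} applied with $\mathcal{G}=\mathbb{Z}^m$, or a direct repetition of the estimate). Setting $g^{\ast}({\bf t}):=f^{\ast}({\bf t}+{\bf r}_0)$ gives the limit function for the real sequence, so $f$ is almost automorphic; combined with uniform continuity, Theorem~\ref{crt} yields $f\in\mathcal{K}AA(\mathbb{R}^m;\X)$.

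For the converse inclusion $\mathcal{K}AA(\mathbb{R}^m;\X)\subset\mathbb{Z}^mAA(\mathbb{R}^m;\X)\cap UC(\mathbb{R}^m;\X)$, note that every compact almost automorphic function is uniformly continuous by Theorem~\ref{crt}, and is almost automorphic, hence a fortiori $\mathbb{Z}^m$-almost automorphic since restricting the admissible sequences to those lying in $\mathbb{Z}^m$ only weakens the requirement. The main obstacle is the middle step described above: without uniform continuity one cannot transfer a pointwise integer-shift limit to a pointwise real-shift limit, so the hypothesis genuinely cannot be dropped, and the whole argument hinges on pairing the compactness of $[0,1]^m$ (to stabilize the fractional parts) with the uniform modulus of continuity. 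A minor technical care is the bookkeeping of nested subsequences (one extraction for ${\bf r}_k\to{\bf r}_0$, one for the $\mathbb{Z}^m$-almost automorphy, and—if one wants the uniform-on-compacta version directly rather than via Theorem~\ref{crt}—a further diagonal argument), but this is routine.
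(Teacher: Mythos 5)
Your proposal is correct and follows essentially the same route as the paper: the same integer/fractional-part decomposition of an arbitrary real sequence, compactness of $[0,1]^m$ to stabilize the fractional parts, the same triangle-inequality transfer using uniform continuity of $f$ (and of $f^{\ast}$ via Proposition~\ref{prou}) with the limit function $f^{\ast}(\cdot+{\bf r}_0)$, and the final appeal to Theorem~\ref{crt}. No substantive differences to report.
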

\begin{proof}
Let us see\\
\noindent    i) Let $f$ be a compact almost automorphic function, then $f$ is $\mathbb{Z}^m$-almost automorphic and, according to Theorem \ref{crt}, it is uniformly continuous. Therefore we have the inclusion $\mathcal{K}AA(\mathbb{R}^m ; \X)\subset  \mathbb{Z}^mAA(\mathbb{R}^m ; \X) \cap UC(\mathbb{R}^m ; \X).$ \\
\noindent   ii) Let us prove the reverse inclusion. Consider $f$ a $\mathbb{Z}^m$-almost automorphic and uniformly continuous function; then, according to Theorem \ref{crt}, we need to prove that $f$ is almost automorphic.\\
Let $({\bf b}_{k}=(b_{k}^{1},b_{k}^{2},\cdot \cdot\cdot ,b_{k}^{m}))_{k\in \mathbb{N}}$ be an arbitrary sequence in $\mathbb{R}^m$. Denoting by $[|\cdot |]$ the integer part of a real umber and by $\{\cdot \}$ the decimal part, then for each $j\in \{1,2,\cdots , m\}$ we have the decomposition 
$b_k^j=[|b_k^j|]+\{b_k^j\}$, where $[|b_k^j|]\in \mathbb{Z}$ and $\{b_k^j\} \in [0,1)$. In this way we have 
\begin{eqnarray*}
    {\bf b}_k     &=& \underbrace{([|b_k^1|],[|b_k^2|],\cdots,[|b_k^m|])}_{\in \mathbb{Z}^m} + \underbrace{(\{b_k^1\},\{b_k^2\},\cdots,\{b_k^m\})}_{\in [0,1)^m}.
\end{eqnarray*}
Since $f \in \mathbb{Z}^mAA(\mathbb{R}^m,\X)$, there exist a subsequence $(\alpha_k)_{k \in\mathbb{N}}$ of $([|b_k^1|],[|b_k^2|],\cdots,[|b_k^m|])_{k \in\mathbb{N}}$ and a function $f^*:\mathbb{R}^m \rightarrow \X$ such that the following pointwise limits hold
$$\lim_{k\rightarrow\infty}{f(\t+\alpha_k)=f^*(\t)}\, , \t \in \mathbb{R}^m\, ,$$
$$\lim_{k\rightarrow\infty}{f^*(\t-\alpha_k)=f(\t)}\, , \t \in \mathbb{R}^m\, .$$ 
On the other hand, since for each $k\in \mathbb{N}$, $(\{b_k^1\},\{b_k^2\},\cdots,\{b_k^m\}) \in [0,1)^m$, then the sequence $((\{b_k^1\},\{b_k^2\},\cdots,\{b_k^m\}))_{k\in \mathbb{N}}$ is bounded. Therefore, there exists a convergent  subsequence $(\beta_k=(\beta_k^1,\beta_k^2,\cdots,\beta_k^m))_{k\in \mathbb{N}}$ of $(\{b_k^1\},\{b_k^2\},\cdots,\{b_k^m\}))_{k\in \mathbb{N}}$ which converges to the point $\beta_0=(\beta_0^1,\beta_0^2,\cdots,\beta_0^m) \in [0,1]^m$. 

In this way, it is possible to construct a subsequence $(\eta_k)_{k \in \mathbb{N}} \subset ({\bf b}_k)_{k\in \mathbb{N}}$, defined by
$$\eta_k := (\alpha_k^1,\alpha_k^2,\cdots,\alpha_k^m)+(\beta_k^1,\beta_k^2,\cdots,\beta_k^m)=\alpha_k + \beta_k \, .$$
Now, define the function $h:\mathbb{R}^m\to \X$ by $h(\t):= f^*(\t+\beta_0)$.

\noindent Let $\t \in \mathbb{R}^m$ be fixed, we have
\begin{eqnarray*}
    ||f(\t+\eta_k)-h(\t)||_{\X}\leq ||f(\t+\alpha_k+\beta_k)-f(\t+\alpha_k+\beta_0)||_{\X}+||f(\t+\alpha_k+\beta_0)-h(\t)||_{\X}\, .
\end{eqnarray*}
Since \(\beta_k \to \beta_0\) and \(||f(\t + \beta_0 + \alpha_k) - h(\t)||_{\mathcal{X}} \rightarrow 0\) when \(k \rightarrow \infty\) and due to the uniform continuity of \(f\), we conclude (according to the previous inequality) that:
$$\lim_{k\rightarrow \infty}{f(\t+\eta_k)=h(\t)}.$$
On the other hand, according to Proposition \ref{prou}, we know that \( f^* \) is uniformly continuous. Then, arguing as before, it can be proved that
$$\lim_{k\rightarrow \infty}{h(\t-\eta_k)=f(\t)}\, .$$
\end{proof}

\begin{tw} ({\bf Third characterization})
    Let $f: \mathbb{R}^m\to \mathcal{X}$ be a continuous function and $\mathcal{G}$ a dense subset of $\mathbb{R}^m$. Then, $f$ is compact almost automorphic if and only if it is $\mathcal{G}$-almost automorphic and uniformly continuous. Thus,
    $$\mathcal{K}AA(\mathbb{R}^m ; \X) = \mathcal{G}AA(\mathbb{R}^m ; \X) \cap UC(\mathbb{R}^m ; \X) .$$
\end{tw}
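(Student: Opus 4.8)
The plan is to mimic the proof of the Second characterization ($\mathcal{G}=\mathbb{Z}^m$ case), replacing the integer-part/decimal-part decomposition by an approximation argument that exploits the density of $\mathcal{G}$ in $\mathbb{R}^m$. The forward inclusion is immediate: if $f\in\mathcal{K}AA(\mathbb{R}^m;\mathcal{X})$, then in particular $f$ is $\mathcal{G}$-almost automorphic (the defining condition for $\mathcal{G}$-multi-almost automorphy only asks for sequences in $\mathcal{G}$, which is a sub-requirement of the classical condition that holds for all sequences in $\mathbb{R}^m$), and by Theorem \ref{crt} it is uniformly continuous. Hence $\mathcal{K}AA(\mathbb{R}^m;\mathcal{X})\subset \mathcal{G}AA(\mathbb{R}^m;\mathcal{X})\cap UC(\mathbb{R}^m;\mathcal{X})$.

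For the reverse inclusion, suppose $f$ is $\mathcal{G}$-almost automorphic and uniformly continuous. By Theorem \ref{crt} it suffices to show $f\in AA(\mathbb{R}^m;\mathcal{X})$. Let $(\mathbf{b}_k)_{k\in\mathbb{N}}$ be an arbitrary sequence in $\mathbb{R}^m$. Using the density of $\mathcal{G}$, for each $k$ pick $\mathbf{c}_k\in\mathcal{G}$ with $\|\mathbf{b}_k-\mathbf{c}_k\|<1/k$, and write $\mathbf{b}_k=\mathbf{c}_k+\mathbf{d}_k$ where $\mathbf{d}_k:=\mathbf{b}_k-\mathbf{c}_k\to 0$. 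Since $(\mathbf{c}_k)\subset\mathcal{G}$ and $f$ is $\mathcal{G}$-almost automorphic, extract a subsequence (still indexed by $k$ after relabeling) and a function $f^*:\mathbb{R}^m\to\mathcal{X}$ with $f(\mathbf{t}+\mathbf{c}_k)\to f^*(\mathbf{t})$ and $f^*(\mathbf{t}-\mathbf{c}_k)\to f(\mathbf{t})$ pointwise; since $f$ is uniformly continuous, Proposition \ref{prou} gives that $f^*$ is uniformly continuous as well. Along this subsequence $\mathbf{d}_k\to 0$ still. Now set $\eta_k:=\mathbf{c}_k+\mathbf{d}_k=\mathbf{b}_k$ (here, unlike the $\mathbb{Z}^m$ case, the perturbation $\mathbf{d}_k$ already tends to $0$ so there is no limit point $\beta_0$ to absorb; the natural candidate limit function is simply $f^*$ itself). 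For fixed $\mathbf{t}$,
$$\|f(\mathbf{t}+\mathbf{b}_k)-f^*(\mathbf{t})\|_{\mathcal{X}}\leq \|f(\mathbf{t}+\mathbf{c}_k+\mathbf{d}_k)-f(\mathbf{t}+\mathbf{c}_k)\|_{\mathcal{X}}+\|f(\mathbf{t}+\mathbf{c}_k)-f^*(\mathbf{t})\|_{\mathcal{X}},$$
and the first term tends to $0$ by the uniform continuity of $f$ (since $\mathbf{d}_k\to 0$) while the second tends to $0$ by construction; hence $f(\mathbf{t}+\mathbf{b}_k)\to f^*(\mathbf{t})$. Symmetrically, $\|f^*(\mathbf{t}-\mathbf{b}_k)-f(\mathbf{t})\|_{\mathcal{X}}\leq\|f^*(\mathbf{t}-\mathbf{c}_k-\mathbf{d}_k)-f^*(\mathbf{t}-\mathbf{c}_k)\|_{\mathcal{X}}+\|f^*(\mathbf{t}-\mathbf{c}_k)-f(\mathbf{t})\|_{\mathcal{X}}\to 0$, using the uniform continuity of $f^*$ for the first term and the $\mathcal{G}$-almost automorphy limit for the second. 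This establishes the two pointwise limits of Definition \ref{dfAA} for the original sequence $(\mathbf{b}_k)$ (passing through the extracted subsequence), so $f\in AA(\mathbb{R}^m;\mathcal{X})$, and Theorem \ref{crt} finishes the proof.

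The main subtlety I anticipate is the bookkeeping of subsequences: one must first pass to a subsequence along which the $\mathcal{G}$-approximants $\mathbf{c}_k$ produce the convergence to $f^*$, while simultaneously keeping $\mathbf{d}_k=\mathbf{b}_k-\mathbf{c}_k\to 0$ (this is automatic, since $\|\mathbf{d}_k\|<1/k$ for every $k$, so any subsequence of $\mathbf{d}_k$ still goes to $0$). A second point to state carefully is that $f^*$ is the \emph{same} limit function witnessing both the direct and inverse limits, which is part of the $\mathcal{G}$-almost automorphy hypothesis; combined with Proposition \ref{prou}, this is exactly what makes the telescoping estimates above collapse. Note also that density (not the subgroup structure) is all that is used here, consistent with the statement; the uniform continuity of $f$ is used twice and is indispensable, since it is precisely what upgrades the weaker $\mathcal{G}$-almost automorphy to full almost automorphy.
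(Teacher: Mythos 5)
Your proof is correct and follows essentially the same route as the paper's: approximate the arbitrary sequence $(\mathbf{b}_k)$ by a sequence in $\mathcal{G}$ using density (the paper uses radii $\epsilon/2^k$ where you use $1/k$, an immaterial difference), extract a subsequence and limit function $f^*$ from the $\mathcal{G}$-almost automorphy, invoke Proposition \ref{prou} for the uniform continuity of $f^*$, and transfer both limits to $(\mathbf{b}_k)$ via the triangle inequality and uniform continuity, finishing with Theorem \ref{crt}. Your bookkeeping remarks about subsequences match what the paper does implicitly.
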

\begin{proof}
The inclusion $\mathcal{K}AA(\mathbb{R}^m ; \X) \subset \mathcal{G}AA(\mathbb{R}^m ; \X) \cap UC(\mathbb{R}^m ; \X)$ follows from Theorem \ref{crt}. Let us prove the reverse inclusion.

Let $f \in \mathcal{G}AA(\mathbb{R}^m ; \X) \cap UC(\mathbb{R}^m ; \X)$, according to Theorem \ref{crt} we need to prove that $f$ is almost automorphic. Let $({\bf b}_k) \subset \mathbb{R}^m$  be an arbitrary sequence and $\epsilon>0$. For each $k\in \mathbb{N}$, let $B^*({\bf b}_k,\epsilon/2^k):=B({\bf b}_k,\epsilon/2^k)\setminus \{{\bf b_k} \}$, where $B({\bf b}_k,\epsilon/2^k)$ is the open ball with center $ {\bf b}_k$ and radius $\epsilon /2^k$. Since $\mathcal{G}$ is dense in $\mathbb{R}^m$, for each $k\in \mathbb{N}$, there exists  a point $\gamma_k \in B^*({\bf b}_k,\epsilon/2^k)\cap \mathcal{G}$. Equivalently, there exists a sequence  $(\gamma_{k})$ in $\mathcal{G}$ such that $\gamma_{k}\not = {\bf b}_k $ and $||\gamma_{k}-{\bf b}_k||<\epsilon/2^k$, which implies that $\lim_{k\to \infty}||\gamma_{k}-{\bf b}_k||=0$.

 Since $f$ is $\mathcal{G}$-almost automorphic, there exist a subsequence $(\gamma_{k_l}) \subset (\gamma_{k})$ and a function $f^*:\mathbb{R}^m \rightarrow \X$ such that the following pointwise limits hold
\begin{equation}\label{EeQqNnWw}
    \lim_{l\rightarrow\infty}{f(\t+\gamma_{k_l})=f^*(\t)}\, , \, \, \,  \lim_{l\rightarrow\infty}{f^*(\t-\gamma_{k_l})=f(\t)}\, , \, \, {\bf t} \in \mathbb{R}^m\, .
\end{equation}
From Proposition \ref{prou}, it follows that $f^*$ is also uniformly continuous. On the other hand, there exists a subsequence $({\bf b}_{k_l}) \subset ({\bf b}_k)$ such that $\lim_{l\to \infty}||\gamma_{k_l}-{\bf b}_{k_l}||=0$. Now, from the following inequalities
\begin{eqnarray*}
    ||f(\t+{\bf b}_{k_l})-f^*(\t)||_{\mathcal{X}}&\leq & ||f(\t+{\bf b}_{k_l})-f(\t+\gamma_{k_l})||_{\mathcal{X}}+\\
    &+& ||f(\t+\gamma_{k_l})-f^*(\t) ||_{\mathcal{X}}\, ,
\end{eqnarray*}
\begin{eqnarray*}
    ||f^*(\t-{\bf b}_{k_l})-f(\t)||_{\mathcal{X}}&\leq & ||f^*(\t-{\bf b}_{k_l})-f^*(\t-\gamma_{k_l})||_{\mathcal{X}}+\\
    &+& ||f^*(\t-\gamma_{k_l})-f(\t) ||_{\mathcal{X}}\, ,
\end{eqnarray*}
the uniform continuity of $f$, of $f^*$ and the limits in (\ref{EeQqNnWw}) we conclude that
$$\lim_{l\rightarrow\infty}{f(\t+ {\bf b}_{k_l})=f^*(\t)}\, ; $$
and,
$$\lim_{l\rightarrow\infty}{f^*(\t-{\bf b}_{k_l})=f(\t)}.$$
Therefore $f$ is almost automorphic.

\end{proof}

\section{Convolution products}\label{section conv}

Using the notation $\mathcal{I}_{{\bf t}}=(-\infty,t_1]\times (-\infty,t_2]\times \cdots \times (-\infty,t_m]$ for a point  ${\bf t} = (t_1,t_2,\cdots , t_m) \in \mathbb{R}^m$, we have:

\begin{tw}\label{TI0101} \cite{chavez2022multi}

\begin{enumerate}
\item Let $f:\mathbb{R}^{m}\rightarrow \X$ be a bounded ${\mathrm R}_{\mathcal{G}}$-multi-almost automorphic function. Let 
$(K({\bf t}))_{{\bf t}\in (0,\infty)^{m}}\subseteq L(\X, \mathcal{Y})$ be a strongly continuous operator family and 
$$\int_{(0,\infty)^{m}}\| K({\bf t})\|_{L(\X,\mathcal{Y})}\, d {\bf t}\, < +\infty .$$
Define
\begin{equation}\label{rep01}
F({\bf t}):=\int_{\mathcal{I}_{\bf t}}K({\bf t}-\eta)f(\eta)\, d\eta,\quad {\bf t}\in {\mathbb R}^{m}.
\end{equation}

Then $F(\cdot)$ is a bounded ${\mathrm R}_{\mathcal{G}}$-multi-almost automorphic function. 

\item Let $f:\mathbb{R}^{m}\rightarrow \X$ be a bounded (compactly) ${\mathrm R}_{\mathcal{G}}$-multi-almost automorphic function. Let 
$(K({\bf t}))_{{\bf t}\in \mathbb{R}^{m}}\subseteq L(\X, \mathcal{Y})$ be a strongly continuous operator family and 
$$\int_{\mathbb{R}^{m}}\| K({\bf t})\|_{L(\X,\mathcal{Y})}\, d {\bf t}\, < +\infty .$$
Define
\begin{equation}\label{rep02}
F({\bf t}):=\int_{\mathbb{R}^m}K({\bf t}-\eta)f(\eta)\, d\eta,\quad {\bf t}\in {\mathbb R}^{m}.
\end{equation}
Then $F(\cdot)$ is a bounded (compactly) ${\mathrm R}_{\mathcal{G}}$-multi-almost automorphic function. 
\end{enumerate}
\end{tw}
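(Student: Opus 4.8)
\textbf{Proof proposal for Theorem \ref{TI0101}.}

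The plan is to handle the two items in parallel, since the second one subsumes the structure of the first; the only difference is the domain of integration (a quadrant-translate $\mathcal{I}_{\bf t}$ versus all of $\mathbb{R}^m$) and the extra claim about preservation of the compactly-automorphic property. First I would verify that $F$ is well-defined and bounded: writing $M = \sup_{\eta}\|f(\eta)\|_{\X}$, the integrability hypothesis on $\|K(\cdot)\|_{L(\X,\mathcal{Y})}$ gives $\|F({\bf t})\|_{\mathcal{Y}} \le M \int \|K(s)\|_{L(\X,\mathcal{Y})}\,ds$ after the change of variables $s = {\bf t}-\eta$ (in case {\it 1} the region $\mathcal{I}_{\bf t}$ becomes $(0,\infty)^m$ under this substitution, which is precisely why the quadrant appears). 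Continuity of $F$ follows from strong continuity of $K(\cdot)$, boundedness of $f$, and dominated convergence. After the substitution, in both cases $F({\bf t}) = \int_{\Omega} K(s) f({\bf t}-s)\,ds$ with $\Omega = (0,\infty)^m$ or $\Omega = \mathbb{R}^m$ — a genuine convolution-type representation, which is the form I would work with for the automorphy argument.

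Next I would establish the $\mathrm{R}_{\mathcal{G}}$-multi-almost automorphy. Fix a sequence $({\bf b}_k) \in \mathrm{R}_{\mathcal{G}}$. Apply the definition of $\mathrm{R}_{\mathcal{G}}$-multi-almost automorphy to $f$ to extract a subsequence $({\bf b}_{k_l})$ and a limit function $f^*$ with $f({\bf t}+{\bf b}_{k_l}) \to f^*({\bf t})$ and $f^*({\bf t}-{\bf b}_{k_l}) \to f({\bf t})$ pointwise. Define the candidate limit function $F^*({\bf t}) := \int_{\Omega} K(s) f^*({\bf t}-s)\,ds$; note $f^*$ is bounded by the same $M$ (pointwise limits of functions bounded by $M$), so $F^*$ is well-defined. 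Then for fixed ${\bf t}$,
\begin{align*}
\bigl\| F({\bf t}+{\bf b}_{k_l}) - F^*({\bf t}) \bigr\|_{\mathcal{Y}} \le \int_{\Omega} \|K(s)\|_{L(\X,\mathcal{Y})}\, \bigl\| f({\bf t}+{\bf b}_{k_l}-s) - f^*({\bf t}-s) \bigr\|_{\X}\, ds.
\end{align*}
The integrand is dominated by $2M\|K(s)\|_{L(\X,\mathcal{Y})}$, which is integrable, and for each fixed $s$ it tends to $0$ by the pointwise convergence $f(\cdot + {\bf b}_{k_l}) \to f^*$ applied at the point ${\bf t}-s$; dominated convergence then gives $F({\bf t}+{\bf b}_{k_l}) \to F^*({\bf t})$. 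The reverse limit $F^*({\bf t}-{\bf b}_{k_l}) \to F({\bf t})$ is entirely symmetric, using $f^*(\cdot - {\bf b}_{k_l}) \to f$. This proves item {\it 1} and the non-compact half of item {\it 2}.

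For the compactly-automorphic part of item {\it 2}, assume $f$ is compactly $\mathrm{R}_{\mathcal{G}}$-multi-almost automorphic; then the subsequence can be chosen so the convergences $f(\cdot+{\bf b}_{k_l})\to f^*$ and $f^*(\cdot-{\bf b}_{k_l})\to f$ are uniform on compact subsets of $\mathbb{R}^m$. I would then upgrade the dominated-convergence step to be uniform in ${\bf t}$ ranging over a compact set $\mathcal{K}$: split the $s$-integral into $\{|s| \le R\}$ and $\{|s| > R\}$. On the tail, $\int_{|s|>R}\|K(s)\|\,ds$ is small uniformly by integrability; on the ball $\{|s|\le R\}$, the points ${\bf t}-s$ range over the compact set $\mathcal{K} - \overline{B(0,R)}$, so $\sup_{|s|\le R}\|f({\bf t}+{\bf b}_{k_l}-s) - f^*({\bf t}-s)\|_{\X} \to 0$ uniformly in ${\bf t}\in\mathcal{K}$ by compact-uniform convergence, and the remaining factor $\int_{|s|\le R}\|K(s)\|\,ds$ is a finite constant. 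Hence $\sup_{{\bf t}\in\mathcal{K}}\|F({\bf t}+{\bf b}_{k_l}) - F^*({\bf t})\|_{\mathcal{Y}} \to 0$, and symmetrically for the reverse limit, so $F$ is compactly $\mathrm{R}_{\mathcal{G}}$-multi-almost automorphic. The main obstacle is purely bookkeeping: ensuring the same subsequence $({\bf b}_{k_l})$ works throughout, handling the quadrant domain $\mathcal{I}_{\bf t}$ correctly under the change of variables in item {\it 1}, and splitting the tail uniformly in the compact case — no deep difficulty, just care with the dominated-convergence estimates.
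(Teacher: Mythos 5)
Your proposal is correct and follows essentially the same route as the paper: define $F^*$ by convolving $K$ with the limit function $f^*$, change variables to put the integral in convolution form, and pass to the limit under the integral sign. The only cosmetic difference is that where the paper applies dominated convergence to the function $\eta\mapsto\sup_{z\in\mathcal{E}-\eta}\|f(z+{\bf b}_{k_l})-f^*(z)\|_{\X}$ directly, you make the same estimate explicit by splitting the integral into a large ball (where uniform convergence on the compact set $\mathcal{K}-\overline{B(0,R)}$ applies) and a small tail; both arguments are valid and interchangeable.
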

\begin{proof}
Let us prove the second item under the hypothesis that $f$ is compactly ${\mathrm R}_{\mathcal{G}}$-multi-almost automorphic. Let $({\bf b}_{k})$ be an arbitrary sequence in ${\mathrm R}_{\mathcal{G}}$, then there exist a subsequence $({\bf b}_{k_{l}})$ of $({\bf b}_{k})$  and a function
$f^{\ast} : {\mathbb R}^{m}\rightarrow \X$ such that the following limits hold
\begin{align}\label{love1234567801}
\lim_{l\rightarrow +\infty}\sup_{{\bf t}\in \mathcal{E}}||f\bigl({\bf t} +{\bf b}_{k_{l}})-f^{\ast}({\bf t}) ||_{\mathcal{X}}=0
\end{align}
and
\begin{align}\label{love12345678902}
\lim_{l\rightarrow +\infty}\sup_{{\bf t}\in \mathcal{E}}||f^{\ast}\bigl({\bf t}-{\bf b}_{k_{l}})-f({\bf t}) ||_{\mathcal{X}}=0\, ;
\end{align}
where $\mathcal{E}$ is a compact subset of $\mathbb{R}^m$. Now, let us define the function
$$
F^{\ast}({\bf t}):=\int_{\mathbb{R}^m}K({\bf t}-\eta)f^{\ast}(\eta)\, d\eta,\quad {\bf t}\in {\mathbb R}^{m}\, .
$$
Take $\mathcal{E}\subset \mathbb{R}^m$ a compact set; then, for any ${\bf t} \in \mathcal{E} $ we have
\begin{eqnarray*}
||F({\bf t}+{\bf b}_{k_{l}})-F^{\ast}({\bf t})||_{\mathcal{Y}} &=&|| \int_{\mathbb{R}^m}K(\eta)( f({\bf t}-\eta + {\bf b}_{k_{l}})-f^{\ast}({\bf t}-\eta))\, d\eta||_{\mathcal{Y}}\\
&\leq &   \int_{\mathbb{R}^m}||K(\eta)||_{L(\mathcal{X},\mathcal{Y})}  \sup_{z\in \mathcal{E}_{\eta}}|| f(z+{\bf b}_{k_{l}})-f^{\ast}(z)||_{\mathcal{X}}d\eta\, ,
\end{eqnarray*}
where, $\mathcal{E}_{\eta}:=\mathcal{E} -{\eta}$ is a compact subset of $\mathbb{R}^m$. Using 
(\ref{love1234567801}) and the Lebesgue's Dominated Convergence Theorem, we conclude
$$\lim_{l \to \infty}\sup_{{\bf t}\in \mathcal{E}}||F({\bf t}+{\bf b}_{k_{l}})-F^{\ast}({\bf t})||_{\mathcal{Y}}=0\, .$$
Analogously, we have
$$\lim_{l \to \infty}\sup_{{\bf t}\in \mathcal{E}}||F^{\ast}({\bf t}-{\bf b}_{k_{l}})-F({\bf t})||_{\mathcal{Y}}=0\, .$$
\end{proof}

Based on the characterization theorems of compactly multi-almost automorphic functions presented in Section \ref{section char}, we derive the following corollary:
\begin{cor}
Let $\mathcal{G}$ be either $\mathbb{Z}^m$ or a dense subset of $\mathbb{R}^m$ and $f:\mathbb{R}^{m}\rightarrow \X$ be a bounded and ${\mathcal{G}}$-multi-almost automorphic function. Let 
$(K({\bf t}))_{{\bf t}\in \mathbb{R}^{n}}\subseteq L(\X, \mathcal{Y})$ be a strongly continuous operator family and 
$$\int_{\mathbb{R}^{m}}\| K({\bf t})\|_{L(\X,\mathcal{Y})}\, d {\bf t}\, < +\infty .$$
Define
\begin{equation}\label{rep021}
F({\bf t}):=\int_{\mathbb{R}^m}K({\bf t}-\eta)f(\eta)\, d\eta,\quad {\bf t}\in {\mathbb R}^{m}.
\end{equation}
If $F$ is uniformly continuous, then $F$ is compactly multi-almost automomorphic; that is $F \in \mathcal{K} AA(\mathbb{R}^m; \mathcal{Y})$.
\end{cor}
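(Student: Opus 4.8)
The plan is to obtain the statement as a direct combination of Theorem~\ref{TI0101} with the characterization theorems of Section~\ref{section char}, with essentially no new computation. First I would observe that, by definition, a bounded $\mathcal{G}$-multi-almost automorphic function is exactly a bounded $\mathrm{R}_{\mathcal{G}}$-multi-almost automorphic function for the particular choice $\mathrm{R}_{\mathcal{G}} = \mathrm{R}_{\mathcal{G}}^{a}$ (the collection of \emph{all} sequences in $\mathcal{G}$). Hence item~2 of Theorem~\ref{TI0101} applies verbatim to $f$ and the operator family $(K({\bf t}))_{{\bf t}\in\mathbb{R}^m}$, and yields that the function $F$ defined in~(\ref{rep021}) is again bounded and $\mathrm{R}_{\mathcal{G}}^{a}$-multi-almost automorphic, i.e. $F \in \mathcal{G}AA(\mathbb{R}^m;\mathcal{Y})$.

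Next, bringing in the standing hypothesis that $F$ is uniformly continuous, we have $F \in \mathcal{G}AA(\mathbb{R}^m;\mathcal{Y}) \cap UC(\mathbb{R}^m;\mathcal{Y})$. Now split according to the two admissible choices of $\mathcal{G}$. If $\mathcal{G}=\mathbb{Z}^m$, the Second characterization gives $\mathcal{K}AA(\mathbb{R}^m;\mathcal{Y}) = \mathbb{Z}^mAA(\mathbb{R}^m;\mathcal{Y}) \cap UC(\mathbb{R}^m;\mathcal{Y})$, so $F \in \mathcal{K}AA(\mathbb{R}^m;\mathcal{Y})$. If instead $\mathcal{G}$ is a dense subset of $\mathbb{R}^m$, then, since $F$ is continuous, the Third characterization gives $\mathcal{K}AA(\mathbb{R}^m;\mathcal{Y}) = \mathcal{G}AA(\mathbb{R}^m;\mathcal{Y}) \cap UC(\mathbb{R}^m;\mathcal{Y})$, and again $F \in \mathcal{K}AA(\mathbb{R}^m;\mathcal{Y})$. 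In both cases the conclusion follows.

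I do not expect any genuine obstacle here: the corollary is a bookkeeping consequence of results already proved. The only points worth one line of justification are (i) that the characterization theorems, although stated for $\mathcal{X}$-valued maps, hold with $\mathcal{X}$ replaced by $\mathcal{Y}$ without change, and (ii) that the passage from $f$ to $F$ requires nothing beyond recognizing $\mathcal{G}$-multi-almost automorphy as the instance $\mathrm{R}_{\mathcal{G}}=\mathrm{R}_{\mathcal{G}}^{a}$ of Theorem~\ref{TI0101}, so that the integrability hypothesis on $K$ is precisely what that theorem requires. (One may additionally remark that boundedness of $F$, used implicitly, is already furnished by Theorem~\ref{TI0101}, and in the dense-subgroup case also by Theorem~\ref{teo1}.)
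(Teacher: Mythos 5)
Your proposal is correct and follows exactly the route the paper intends: the paper states this corollary without proof, introducing it as a direct consequence of Theorem~\ref{TI0101} (item 2, applied with $\mathrm{R}_{\mathcal{G}}=\mathrm{R}_{\mathcal{G}}^{a}$) together with the Second and Third characterization theorems of Section~\ref{section char}. Your two side remarks (that the characterizations hold with $\mathcal{Y}$ in place of $\mathcal{X}$, and that boundedness of $F$ comes from Theorem~\ref{TI0101}) are accurate and harmless.
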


In what follows, we study the invariance of the bounded and compact $\mathrm{R}_{\mathcal{G}}$-multi-almost automorphic function space in the context where the kernel \( K \) in (\ref{rep01}) or (\ref{rep02}) depends on two variables, i.e., is of the form \( K(\cdot, \cdot) \). For this purpose, we introduce the notion of a (compactly) $\mathrm{R}_{\mathcal{G}}$-multi-Bi-almost automorphic function in the subsequent definition.
 
\begin{definition}\label{defBaa} A jointly continuous function $K:\mathbb{R}^m\times \mathbb{R}^m\times \X \to \mathcal{Y}$ is $(\mathrm{R}_{\mathcal{G}},\mathcal{B})$-multi-Bi-almost automorphic if for any $B \in \mathcal{B}$ and any sequence $(b_k)_{k \in \mathbb{N}} \in  \mathrm{R}_{\mathcal{G}}$, there exist a subsequence $(b_{k_l})\subset (b_k)$ and a function $K^*:\mathbb{R}^m\times \mathbb{R}^m\times \X \to \mathcal{Y}$ such that
\begin{equation}\label{EqNew01}
\lim_{l \to +\infty} K({\bf t}+b_{k_l}, {\bf s}+b_{k_l},x)=K^*({\bf t}, {\bf s},x)\, ,
\end{equation}
and 
\begin{equation}\label{EqNew02}
\lim_{l \to +\infty}K^*({\bf t}-b_{k_l}, {\bf s}-b_{k_l},x)=K({\bf t}, {\bf s},x)\, ,
\end{equation}
hold pointwisely for $({\bf t}, {\bf s}) \in \mathbb{R}^m \times  \mathbb{R}^m$ and any $x \in B$. 
If the limits in (\ref{EqNew01}) and (\ref{EqNew02}) are uniform on compact subsets of $\mathbb{R}^m \times \mathbb{R}^m$, then we say that $K$ is compactly $(\mathrm{R}_{\mathcal{G}},\mathcal{B})$-multi-Bi-almost automorphic.
\end{definition}

The next definition, was given in \cite{chavez2021almostaut} in order to study almost automorphic type solutions to integral equations of advanced and delayed type
\begin{definition}
We say that a jointly continuous function $K:\mathbb{R}^m\times\mathbb{R}^m\times \X \to \mathcal{Y}$ is $\lambda$-bounded if there exists a non negative function $\lambda:\mathbb{R}^m\times\mathbb{R}^m \to \mathbb{R}$ such that for every $\tau \in \mathbb{R}^m$ we have
\begin{equation}\label{Eeqqlambda}
||K(\t+\tau,\s+\tau,x)||_{\mathcal{Y}} \leq \lambda({\bf t},{\bf s})\, ,
\end{equation}
where the inequality is for each $(\t,\s)\in \mathbb{R}^m\times\mathbb{R}^m$ and any $x\in \X$.
\end{definition}

As it was proved for the one-dimensional case, see \cite[Lemma 2.3]{chavez2024compact}, the inequality (\ref{Eeqqlambda}) also holds for the limit function $K^*$ of a $\lambda$-bounded and multi-Bi-almost auromorphic function $K$. That is, we have the following lemma

\begin{lem}\label{lemBou}
Let us suppose that $K:\mathbb{R}^m \times \mathbb{R}^m \times \X\to \mathcal{Y}$  is  $(\mathrm{R}_{\mathcal{G}}, \mathcal{B})$-multi-Bi-almost automorphic and $\lambda$-bounded. Then, its limit function $K^*:\mathbb{R}^m\times \mathbb{R}^m\times \X\to \mathcal{Y}$ (see definition \ref{defBaa}) is also $\lambda$-bounded; that is, for each $(\t,\s)\in \mathbb{R}^m\times\mathbb{R}^m$ and any $x\in \X$ we have
$$||K^*(\t+\tau,\s+\tau,x)||_{\mathcal{Y}} \leq \lambda({\bf t},{\bf s}),\, \,  \forall \tau \in \mathbb{R}^m\, .$$
\end{lem}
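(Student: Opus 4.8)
The plan is to mimic the one-dimensional argument of \cite[Lemma 2.3]{chavez2024compact} and obtain the bound for $K^*$ by passing to the limit along the subsequence coming from the definition of multi-Bi-almost automorphy. Fix $B\in\mathcal B$, fix an arbitrary sequence $(b_k)\in\mathrm{R}_{\mathcal G}$, and let $(b_{k_l})\subset(b_k)$ and $K^*$ be as in Definition \ref{defBaa}, so that $K({\bf t}+b_{k_l},{\bf s}+b_{k_l},x)\to K^*({\bf t},{\bf s},x)$ pointwise in $({\bf t},{\bf s})\in\mathbb R^m\times\mathbb R^m$ and $x\in B$.

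The key observation is that $\lambda$-boundedness is translation-invariant in a strong sense: for every $\tau\in\mathbb R^m$ and every $l$, applying the defining inequality (\ref{Eeqqlambda}) with the shift $\tau+b_{k_l}$ gives
\begin{equation*}
\bigl\|K\bigl({\bf t}+\tau+b_{k_l},\,{\bf s}+\tau+b_{k_l},\,x\bigr)\bigr\|_{\mathcal Y}\le\lambda({\bf t},{\bf s}),
\end{equation*}
for all $({\bf t},{\bf s})\in\mathbb R^m\times\mathbb R^m$ and all $x\in\X$. The left-hand side is exactly $K$ evaluated at the pair $({\bf t}+\tau+b_{k_l},{\bf s}+\tau+b_{k_l})$, which by (\ref{EqNew01}) (applied at the shifted point $({\bf t}+\tau,{\bf s}+\tau)$) converges to $K^*({\bf t}+\tau,{\bf s}+\tau,x)$ as $l\to\infty$. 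Letting $l\to\infty$ in the displayed inequality and using continuity of the norm yields
\begin{equation*}
\bigl\|K^*({\bf t}+\tau,{\bf s}+\tau,x)\bigr\|_{\mathcal Y}\le\lambda({\bf t},{\bf s}),
\end{equation*}
for every $\tau\in\mathbb R^m$, every $({\bf t},{\bf s})\in\mathbb R^m\times\mathbb R^m$ and every $x\in B$. Since $\mathcal B$ covers $\X$, this holds for every $x\in\X$, which is precisely the claim.

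I do not expect a genuine obstacle here; the argument is a clean limit passage, and the only point requiring a little care is making sure the sequence $(b_{k_l})$ and limit function $K^*$ extracted from the definition are the \emph{same} ones for which we want the bound — but since Definition \ref{defBaa} produces $K^*$ together with $(b_{k_l})$, and (\ref{EqNew01}) holds at \emph{every} pair of points (in particular at the shifted pairs $({\bf t}+\tau,{\bf s}+\tau)$), there is nothing to reconcile. One should also note that no boundedness or compactness hypothesis on $K$ is needed beyond what is in the statement: $\lambda$-boundedness alone furnishes the uniform-in-$l$ estimate that survives the limit.
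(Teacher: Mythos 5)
Your argument is correct and is exactly the intended one: the paper omits the proof and simply refers to the one-dimensional case (Lemma 2.3 of \cite{chavez2024compact}), which is the same limit passage you carry out — shift-invariance of the $\lambda$-bound applied with the translation $\tau+b_{k_l}$, pointwise convergence from (\ref{EqNew01}) at the shifted point, and continuity of the norm. The only caveat, which is an imprecision of the lemma's statement rather than of your proof, is that the convergence defining $K^*$ is guaranteed only for $x$ in the fixed $B\in\mathcal{B}$, so the bound is literally obtained for those $x$; since $\mathcal{B}$ covers $\X$ and the kernels used later are operator-valued (linear in $x$), this is immaterial.
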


\noindent {\sl Property ({\bf SC})}. We say that the operator valued function \( K: \mathbb{R}^m \times \mathbb{R}^m \to \mathcal{L}(\X; \mathcal{Y}) \) satisfies the property of being \textbf{(SC)} if, for each \( x \in \X \), the function \( (\mathbf{t}, \mathbf{s}) \mapsto K(\mathbf{t}, \mathbf{s}) x \) is continuous.

\begin{tw}\label{TeoComb}
Let $\mathrm{R}_{\mathcal{G}}$ be the set of sequences such that all of its subsequences are also in $\mathrm{R}_{\mathcal{G}}$ and let the operator valued function $K:\mathbb{R}^m \times \mathbb{R}^m\to \mathcal{L}(\X; \mathcal{Y})$ satisfies ({\bf SC}), is compactly $\mathrm{R}_{\mathcal{G}}$-multi-Bi-almost automorphic and is $\lambda$-bounded with $\lambda({\bf t},{\bf s})=\phi({\bf t}-{\bf s})$  and $\phi \in L^1(\mathbb{R}^m)$. 
Then, the operator $\Gamma$, defined by
$$\Gamma u({\bf t}):= \int_{\mathbb{R}^m} K({\bf t},\eta)u(\eta)d\eta\, ,$$
maps the Banach space $\mathcal{K} \mathrm{R}_{\mathcal{G}} AA(\mathbb{R}^m ; \mathcal{X})\cap BC(\mathbb{R}^m ; \mathcal{X})$ to the Banach space $\mathcal{K} \mathrm{R}_{\mathcal{G}} AA(\mathbb{R}^m ;  \mathcal{X})\cap BC(\mathbb{R}^m ; \mathcal{Y})$. 
\end{tw}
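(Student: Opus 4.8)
The plan is to show two things: first, that $\Gamma$ maps bounded continuous functions to bounded continuous functions (indeed to $BUC$), and second, that $\Gamma$ preserves the compact $\mathrm{R}_{\mathcal{G}}$-multi-almost automorphic property. For boundedness, given $u \in BC(\mathbb{R}^m;\mathcal{X})$ with $\|u\|_\infty \le M$, the $\lambda$-boundedness hypothesis with $\lambda({\bf t},{\bf s}) = \phi({\bf t}-{\bf s})$ gives $\|K({\bf t},\eta)u(\eta)\|_{\mathcal{Y}} \le \phi({\bf t}-\eta)\,M$, so after the change of variables $\eta \mapsto {\bf t}-\eta$ one gets $\|\Gamma u({\bf t})\|_{\mathcal{Y}} \le M\|\phi\|_{L^1}$; continuity of $\Gamma u$ follows from property $(\mathbf{SC})$, the $\lambda$-bound, and Lebesgue dominated convergence. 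For uniform continuity, I would in fact not need it separately here since the compact multi-almost automorphy will be established directly, but if desired it follows along the same lines as the proofs of Theorems~\ref{NewThmappl} and \ref{InvConv1}.

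The core step is the preservation of compact $\mathrm{R}_{\mathcal{G}}$-multi-almost automorphy. Let $u \in \mathcal{K}\mathrm{R}_{\mathcal{G}}AA(\mathbb{R}^m;\mathcal{X}) \cap BC(\mathbb{R}^m;\mathcal{X})$ and fix a sequence $({\bf b}_k) \in \mathrm{R}_{\mathcal{G}}$. Using the hypothesis that all subsequences of elements of $\mathrm{R}_{\mathcal{G}}$ remain in $\mathrm{R}_{\mathcal{G}}$, I can pass to a common subsequence $({\bf b}_{k_l})$ that simultaneously works for $K$ (compactly $\mathrm{R}_{\mathcal{G}}$-multi-Bi-almost automorphic, producing $K^*$) and for $u$ (compactly $\mathrm{R}_{\mathcal{G}}$-multi-almost automorphic, producing $u^*$), by a standard diagonal extraction. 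Define the candidate limit function
$$\Gamma^* v({\bf t}) := \int_{\mathbb{R}^m} K^*({\bf t},\eta) u^*(\eta)\, d\eta,$$
noting that by Lemma~\ref{lemBou} the kernel $K^*$ is also $\lambda$-bounded with the same $\phi$, so this integral converges and $\Gamma^* u^* \in BC(\mathbb{R}^m;\mathcal{Y})$. Then I estimate, for ${\bf t}$ in a fixed compact set $\mathcal{E} \subset \mathbb{R}^m$, after the substitution $\eta \mapsto {\bf t} - \eta$,
\begin{align*}
\|\Gamma u({\bf t}+{\bf b}_{k_l}) - \Gamma^* u^*({\bf t})\|_{\mathcal{Y}} &\le \int_{\mathbb{R}^m} \|K({\bf t}+{\bf b}_{k_l}, {\bf t}-\eta+{\bf b}_{k_l})u({\bf t}-\eta+{\bf b}_{k_l}) \\
&\quad - K^*({\bf t}, {\bf t}-\eta)u^*({\bf t}-\eta)\|_{\mathcal{Y}}\, d\eta,
\end{align*}
and split the integrand by adding and subtracting the mixed term $K^*({\bf t}, {\bf t}-\eta) u({\bf t}-\eta+{\bf b}_{k_l})$. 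The second piece is bounded by $\int_{\mathbb{R}^m} \phi(\eta)\, \sup_{{\bf z} \in \mathcal{E}-\eta}\|u({\bf z}+{\bf b}_{k_l}) - u^*({\bf z})\|_{\mathcal{X}}\, d\eta$, which tends to $0$ by the uniform-on-compacta convergence of $u(\cdot+{\bf b}_{k_l})$ to $u^*$ together with dominated convergence (the integrand being dominated by $2M\phi(\eta)$). The first piece is $\int_{\mathbb{R}^m}\|\bigl(K({\bf t}+{\bf b}_{k_l},{\bf t}-\eta+{\bf b}_{k_l}) - K^*({\bf t},{\bf t}-\eta)\bigr)u({\bf t}-\eta+{\bf b}_{k_l})\|_{\mathcal{Y}}\, d\eta$, which is bounded by $M\int_{\mathbb{R}^m} \|\bigl(K({\bf t}+{\bf b}_{k_l},{\bf t}-\eta+{\bf b}_{k_l}) - K^*({\bf t},{\bf t}-\eta)\bigr)\|_{\mathcal{L}(\mathcal{X};\mathcal{Y})}\, d\eta$ — but here one must be careful: the operator-norm difference of $K$ and $K^*$ need not be integrable or small, so instead I should keep the vector $u({\bf t}-\eta+{\bf b}_{k_l})$ attached and use property $(\mathbf{SC})$ plus the compact-Bi-almost automorphy of $K$, which gives pointwise convergence $K({\bf t}+{\bf b}_{k_l}, {\bf s}+{\bf b}_{k_l})x \to K^*({\bf t},{\bf s})x$, uniformly for $({\bf t},{\bf s})$ on compact sets; combined with relative compactness of the range of $u$ (it is bounded, but to get the uniformity one may need an $\varepsilon$-net argument on $\overline{\mathrm{range}(u)}$ when $\mathcal{X}$ is not assumed to have compact-valued $u$) and the domination by $2M\phi$, dominated convergence gives that this piece also tends to $0$ uniformly for ${\bf t} \in \mathcal{E}$. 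The symmetric estimate for $\|\Gamma^* u^*({\bf t}-{\bf b}_{k_l}) - \Gamma u({\bf t})\|_{\mathcal{Y}}$ is handled identically using \eqref{EqNew02} and the second limit for $u$.

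The main obstacle I anticipate is precisely the handling of the first piece: converting the pointwise-in-$x$ convergence supplied by property $(\mathbf{SC})$ and Definition~\ref{defBaa} into a convergence of the integral uniformly in ${\bf t} \in \mathcal{E}$. The clean way is to first reduce, via the density of simple functions or an $\varepsilon$-net, to controlling $\|\bigl(K({\bf t}+{\bf b}_{k_l},{\bf s}+{\bf b}_{k_l}) - K^*({\bf t},{\bf s})\bigr)x_j\|$ for finitely many fixed vectors $x_j$ approximating the range of $u$, for which the compactly-Bi-almost automorphic hypothesis gives uniform convergence on the compact set $\{({\bf t},{\bf t}-\eta) : {\bf t}\in\mathcal{E}\}$ for each truncation $|\eta|\le N$, and then to absorb the tail $|\eta|>N$ using $\int_{|\eta|>N}\phi(\eta)d\eta \to 0$. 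Once this is in place, the rest is bookkeeping with the triangle inequality and Lebesgue's dominated convergence theorem, exactly as in the proof of Theorem~\ref{TI0101} above, and the conclusion $\Gamma u \in \mathcal{K}\mathrm{R}_{\mathcal{G}}AA(\mathbb{R}^m;\mathcal{Y}) \cap BC(\mathbb{R}^m;\mathcal{Y})$ follows.
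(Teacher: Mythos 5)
Your proposal follows essentially the same route as the paper's proof: extract one subsequence $({\bf b}_{k_l})$ that works simultaneously for $K$ and for $u$ (legitimate because $\mathrm{R}_{\mathcal{G}}$ is assumed closed under passing to subsequences), define the candidate limit $v^*({\bf t})=\int_{\mathbb{R}^m}K^*({\bf t},\eta)u^*(\eta)\,d\eta$, split by the triangle inequality into a $(K-K^*)$-term and a $(u-u^*)$-term, truncate the $\eta$-integral to a large ball plus an $L^1$-small tail of $\phi$, and conclude with dominated convergence; the boundedness of $\Gamma u$ via the $\lambda$-bound and $\|\phi\|_{L^1}$, and the symmetric estimate for the second limit via Lemma \ref{lemBou}, are exactly as in the paper. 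The one place you diverge --- and which you yourself flag as ``the main obstacle'' --- is the $(K-K^*)$-term. The paper reads the hypothesis that the $\mathcal{L}(\mathcal{X};\mathcal{Y})$-valued map $({\bf t},{\bf s})\mapsto K({\bf t},{\bf s})$ is compactly $\mathrm{R}_{\mathcal{G}}$-multi-Bi-almost automorphic as convergence \emph{in the operator norm}, uniformly on compact subsets of $\mathbb{R}^m\times\mathbb{R}^m$; with that reading the truncated piece is simply bounded by $\|u\|_{\infty}$ times the supremum of $\|K({\bf t}+{\bf b}_{k_l},{\bf s}+{\bf b}_{k_l})-K^*({\bf t},{\bf s})\|$ over the compact set $\mathcal{E}\times B[R_n,0]$ times the Lebesgue measure of the ball, which tends to zero, and no $\varepsilon$-net is needed. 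If instead you insist on the strong (pointwise-in-$x$) reading, your proposed repair does not close: it requires an $\varepsilon$-net in $\overline{\mathrm{range}(u)}$, but a bounded, compactly $\mathrm{R}_{\mathcal{G}}$-multi-almost automorphic $u$ need \emph{not} have relatively compact range in this generality (that is guaranteed only in situations such as Theorem \ref{teo1}, where $\mathcal{G}$ is a dense subgroup). So you should commit to the operator-norm interpretation, which is what the statement supplies and what the paper uses; with that adjustment your argument is correct and coincides with the paper's.
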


\begin{proof} Let $u \in \mathcal{K} \mathrm{R}_{\mathcal{G}} AA(\mathbb{R}^m, \X)\cap BC(\mathbb{R}^m ; \mathcal{X})$ and define   $v:=\Gamma u$.  
Since $K(\cdot,\cdot)$ is compactly $\mathrm{R}_{\mathcal{G}}$-multi-Bi-almost automorphic and $u$ is compactly $\mathrm{R}_{\mathcal{G}}$-multi-almost automorphic  
we can ensure that given any sequence $(b_k)\subset \mathrm{R}_{\mathcal{G}}$ there exist a subsequence $(b_{k_l})\subseteq (b_k)$ and functions $K^*(\cdot,\cdot)$ and $u^*$ such that the following limits, uniformly on compact subsets of $\mathbb{R}^m\times \mathbb{R}^m$, hold:
$$\lim\limits_{l\to +\infty} K(\t+b_{k_l},\s+b_{k_l})=K^*(\t,\s),\quad \lim\limits_{l\to +\infty} K^*(\t-b_{k_l},\s-b_{k_l})=K(\t,\s);$$
and  the following limits, uniformly on compact subsets of $\mathbb{R}^m$, also hold
$$\lim\limits_{l\to +\infty} u(\t+b_{k_l})=u^*(\t),\quad \lim\limits_{l\to +\infty} u^*(\t-b_{k_l})=u(\t).$$
Now, using the Lebesgue's Dominated Convergence Theorem we see that
\begin{equation}\label{Eq001}
\lim\limits_{l\to +\infty} v(\t+b_{k_l})=v^*(\t),\, \, \lim_{l\to \infty}v^*(\t-b_{k_l})=v(\t)\, ,
\end{equation}
where
$$v^*(\t):=\int_{\mathbb{R}^m} K^*(\t,\eta)u^*(\eta)d\eta\, .$$
That is, $v$ is $\mathrm{R}_{\mathcal{G}}$-multi-almost automorphic. In what follows we prove that the limits in (\ref{Eq001}) are uniform for ${\bf t}$ in compact subsets of $\mathbb{R}^m$.


Let $E$ be a compact subset of $\mathbb{R}^m$. Since $\phi \in L^1(\mathbb{R}^m)$, there exist a strictly increasing sequence of positive real numbers $(R_n)_{n\in \mathbb{N}}$ such that:
$$\lim_{n \to \infty}\int_{\mathbb{R}^m \setminus B[R_n,0]}\phi(z)dz =0\, , $$
where $B[R_n,0]$ is the closed ball of center $0$ and radii $R_n$.

Note that, if $n$ is sufficiently large, then there exists a subsequence $(\tilde{R}_n)$ of $(R_n)$ such that $ E+\mathbb{R}^m \setminus B[R_n,0] \subset \mathbb{R}^m \setminus B[\tilde{R}_n,0] $. Now, let us take ${\bf t} \in E$, then we have
\begin{eqnarray*}
||v(\t+{\bf b}_{k_l})-v^*(\t)||_{\mathcal{X}}&\leq & \int_{\mathbb{R}^m}||K(\t+{\bf b}_{k_l}, \eta+{\bf b}_{k_l})-K^*(\t,\eta)||\, ||u(\eta+\t_k)||_{\mathcal{X}}d\eta \\
&+& \int_{\mathbb{R}^m}||K^*({\bf t},\eta)||\, ||u(\eta+{\bf b}_{k_l})-u^*(\eta)||d\eta\\
&\leq & \int_{B[R_n,0]}||K(\t+{\bf b}_{k_l}, \eta+{\bf b}_{k_l})-K^*(\t,\eta)||\, ||u(\eta+\t_k)||_{\mathcal{X}}d\eta \\
&+& \int_{\mathbb{R}^m \setminus B[R_n,0] }||K(\t+{\bf b}_{k_l}, \eta+{\bf b}_{k_l})-K^*(\t,\eta)||\, ||u(\eta+\t_k)||_{\mathcal{X}}d\eta \\
&+& \int_{\mathbb{R}^m}||K^*({\bf t},\eta)||\, ||u(\eta+{\bf b}_{k_l})-u^*(\eta)||d\eta\\
&\leq & ||u||_{\infty} \sup_{({\bf t}, {\bf s})\in \mathcal{E}\times B[R_n,0]} ||K(\t+{\bf b}_{k_l}, \eta+{\bf b}_{k_l})-K^*(\t,\eta)||\, \mathcal{L}(B[R_n,0])\\
&+& 2||u||_{\infty} \int_{\mathbb{R}^m \setminus B[R_n,0]}\phi({\t}-\eta)d\eta\\
&+& \int_{\mathbb{R}^m} \phi(\eta) \, 
\sup_{z\in E_{\eta}}|| u(z+{\bf b}_{k_{l}})-u^{\ast}(z)||_{\mathcal{X}}d\eta\, ,\\
\end{eqnarray*}
where, $E_{\eta}:=E -{\eta}$ is a compact subset of $\mathbb{R}^m$. From the previous inequalities, we have
\begin{eqnarray*}
\sup_{{\bf t}\in \mathcal{E}}||v(\t+{\bf b}_{k_l})-v^*(\t)||_{\mathcal{X}}&\leq & ||u||_{\infty} \sup_{({\bf t}, {\bf s})\in \mathcal{E}\times B[R_n,0]} ||K(\t+{\bf b}_{k_l}, \eta+{\bf b}_{k_l})-K^*(\t,\eta)||\, \mathcal{L}(B[R_n,0])\\
&+& 2||u||_{\infty} \int_{\mathbb{R}^m \setminus B[R_n,0]}\phi({\t}-\eta)d\eta\\
&+& \int_{\mathbb{R}^m} \phi(\eta) \, 
\sup_{z\in E_{\eta}}|| u(z+{\bf b}_{k_{l}})-u^{\ast}(z)||_{\mathcal{X}}d\eta\, .
\end{eqnarray*}
Now, taking the limit in the last inequality when $l\to \infty$, we obtain
$$\lim_{l\to \infty}\sup_{{\bf t}\in \mathcal{E}}||v(\t+{\bf b}_{k_l})-v^*(\t)||_{\mathcal{X}} \leq 2||u||_{\infty} \int_{\mathbb{R}^m \setminus B[R_n,0]}\phi({\t}-\eta)d\eta .$$

On the other hand, the integral of the right hand side satisfies:
$$\int_{\mathbb{R}^m \setminus B[R_n,0]}\phi({\t}-\eta)d\eta\leq \int_{E+\mathbb{R}^m \setminus B[R_n,0]}\phi(z)dz \leq \int_{\mathbb{R}^m \setminus B[\tilde{R}_n,0]}\phi(z)dz\, \to 0, \, \, n\to \infty\, .$$
Therefore,
$$\lim_{l\to \infty}\sup_{{\bf t}\in \mathcal{E}}||v(\t+{\bf b}_{k_l})-v^*(\t)||_{\mathcal{X}}=0\, .$$

Analogously, we have
$$\lim_{l\to \infty}\sup_{{\bf t}\in E}||v^*(\t-{\bf b}_{k_l})-v(\t)||_{\mathcal{X}} =0\, . $$

\end{proof}


\begin{tw}\label{NewThmappl}
Let $\mathrm{R}_{\mathcal{G}}$ be the set of sequences such that all of its subsequences are also in $\mathrm{R}_{\mathcal{G}}$ and let the operator valued function $K:\mathbb{R}^m \times \mathbb{R}^m\to \mathcal{L}(\X; \mathcal{Y})$ satisfies ({\bf SC}), is compactly multi-Bi-almost automorphic and is $\lambda$-bounded with $\lambda({\bf t},{\bf s})=\phi({\bf t}-{\bf s})$  and $\phi \in L^1(\mathbb{R}^m)$. 
Then, the operator $\Gamma$, defined by
$$\Gamma u({\bf t}):= \int_{\mathbb{R}^m} K({\bf t},\eta)u(\eta)d\eta\, ,$$
maps the space $\mathrm{R}_{\mathcal{G}} AA(\mathbb{R}^m, \mathcal{X})\cap B UC(\mathbb{R}^m ; \mathcal{X})$ to the space $\mathrm{R}_{\mathcal{G}} AA(\mathbb{R}^m, \mathcal{X})\cap B UC(\mathbb{R}^m ; \mathcal{Y})$.
\end{tw}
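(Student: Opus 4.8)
The plan is as follows. Fix $u\in\mathrm{R}_{\mathcal{G}} AA(\mathbb{R}^m;\mathcal{X})\cap B UC(\mathbb{R}^m;\mathcal{X})$, set $v:=\Gamma u$, and prove that $v\in\mathrm{R}_{\mathcal{G}} AA(\mathbb{R}^m;\mathcal{Y})\cap B UC(\mathbb{R}^m;\mathcal{Y})$ by checking, in order, that $v$ is (i) bounded and continuous, (ii) $\mathrm{R}_{\mathcal{G}}$-multi-almost automorphic, and (iii) uniformly continuous. Steps (i)--(ii) follow the pattern of Theorem \ref{TI0101} and of the proof of Theorem \ref{TeoComb}; step (iii) is the new content and is where Theorem \ref{ThRKAA} --- and the fact that $K$ is assumed compactly multi-Bi-almost automorphic with respect to \emph{all} sequences, not merely those in $\mathrm{R}_{\mathcal{G}}$ --- enter.

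For (i): from $\lambda$-boundedness, $\|K(\t,\eta)\|_{\mathcal{L}(\X;\mathcal{Y})}\le\phi(\t-\eta)$, so after the substitution $\eta=\t-z$ one gets $\|v(\t)\|_{\mathcal{Y}}\le\|u\|_{\infty}\|\phi\|_{L^1}$; continuity of $v$ then follows from Lebesgue's dominated convergence theorem, the integrand $z\mapsto K(\t,\t-z)u(\t-z)$ being dominated \emph{uniformly in $\t$} by the fixed $L^1$ function $z\mapsto\|u\|_{\infty}\phi(z)$.

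For (ii): given $(b_k)\in\mathrm{R}_{\mathcal{G}}$, I would extract a subsequence along which $u(\cdot+b_{k_l})\to u^*$ and $u^*(\cdot-b_{k_l})\to u$ pointwise (since $u\in\mathrm{R}_{\mathcal{G}} AA$), and then --- passing to a further subsequence, using that $K$ is multi-Bi-almost automorphic for every sequence --- also obtain $K(\t+b_{k_l},\s+b_{k_l})\to K^*(\t,\s)$ and $K^*(\t-b_{k_l},\s-b_{k_l})\to K(\t,\s)$ pointwise. With $\|u^*\|_\infty\le\|u\|_\infty$ and, by Lemma \ref{lemBou}, $\|K^*(\t+\tau,\s+\tau)\|\le\phi(\t-\s)$, define $v^*(\t):=\int_{\mathbb{R}^m}K^*(\t,\eta)u^*(\eta)\,d\eta$. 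Substituting $\eta=\sigma\pm b_{k_l}$ in $v(\t+b_{k_l})$ and $v^*(\t-b_{k_l})$, the integrands converge pointwise and are dominated by $\|u\|_\infty\phi(\t-\sigma)\in L^1$; dominated convergence gives $v(\t+b_{k_l})\to v^*(\t)$ and $v^*(\t-b_{k_l})\to v(\t)$, so $v\in\mathrm{R}_{\mathcal{G}} AA(\mathbb{R}^m;\mathcal{Y})$.

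For (iii): substituting $\eta=\sigma+h$ in $v(\t+h)$, write
\begin{align*}
v(\t+h)-v(\t)&=\int_{\mathbb{R}^m}K(\t+h,\sigma+h)\bigl[u(\sigma+h)-u(\sigma)\bigr]d\sigma\\
&\quad+\int_{\mathbb{R}^m}\bigl[K(\t+h,\sigma+h)-K(\t,\sigma)\bigr]u(\sigma)\,d\sigma=:I_1(\t,h)+I_2(\t,h).
\end{align*}
Since $\|K(\t+h,\sigma+h)\|\le\phi(\t-\sigma)$, we get $\sup_\t\|I_1(\t,h)\|\le\|\phi\|_{L^1}\sup_\sigma\|u(\sigma+h)-u(\sigma)\|\to0$ as $h\to0$ by the uniform continuity of $u$; and $\sup_\t\|I_2(\t,h)\|\le\|u\|_\infty M(h)$ with $M(h):=\sup_{\t}\int_{\mathbb{R}^m}\|K(\t+h,\sigma+h)-K(\t,\sigma)\|\,d\sigma$. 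Thus everything reduces to $M(h)\to0$ as $h\to0$, and \emph{this is the main obstacle}: for each fixed $\t$ it is immediate from dominated convergence, but the uniformity in $\t$ is precisely the assertion that the bounded, continuous and --- by the $\lambda$-bound together with the compact Bi-almost automorphy of $K$ --- compactly multi-almost automorphic map $\t\mapsto K(\t,\t-\cdot)\in L^1(\mathbb{R}^m;\mathcal{L}(\X;\mathcal{Y}))$ is uniformly continuous, which is exactly the conclusion Theorem \ref{ThRKAA} is built to yield. To avoid operator-valued Bochner integrals I would instead prove $M(h)\to0$ directly by contradiction: if it failed there would be $\varepsilon_0>0$, $h_k\to0$ and $\t_k\in\mathbb{R}^m$ with $\int_{\mathbb{R}^m}\|K(\t_k+h_k,\sigma+h_k)-K(\t_k,\sigma)\|\,d\sigma\ge\varepsilon_0$; applying compact multi-Bi-almost automorphy of $K$ to the sequence $(\t_k)$ --- here is where it must hold for all sequences --- gives, along a subsequence, $K(\,\cdot+\t_{k_l},\,\cdot+\t_{k_l})\to K^*$ uniformly on compact subsets of $\mathbb{R}^m\times\mathbb{R}^m$; substituting $\sigma=\rho+\t_{k_l}$ and using $h_{k_l}\to0$ with the continuity of $K^*$, both $K(\t_{k_l}+h_{k_l},\rho+\t_{k_l}+h_{k_l})$ and $K(\t_{k_l},\rho+\t_{k_l})$ tend pointwise in $\rho$ to $K^*(\mathbf{0},\rho)$, so the integrand $\to0$ pointwise while staying $\le2\phi(-\rho)\in L^1$; dominated convergence then drives the integral to $0$, contradicting $\ge\varepsilon_0$. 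Hence $M(h)\to0$, so $\sup_\t\|v(\t+h)-v(\t)\|\to0$, i.e. $v\in UC(\mathbb{R}^m;\mathcal{Y})$; combined with (i)--(ii), $\Gamma u\in\mathrm{R}_{\mathcal{G}} AA(\mathbb{R}^m;\mathcal{Y})\cap B UC(\mathbb{R}^m;\mathcal{Y})$.
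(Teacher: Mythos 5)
Your proposal is correct and follows the same skeleton as the paper's proof: boundedness and the $\mathrm{R}_{\mathcal{G}}$-almost automorphy of $v=\Gamma u$ are dispatched exactly as in Theorem \ref{TeoComb}, and the real content is the uniform continuity of $v$, which both you and the paper obtain from the same two-term decomposition (a kernel-difference term against $u$, plus the kernel against a difference of translates of $u$); your $I_1,I_2$ are the paper's $I_2(k),I_1(k)$ written with $h=\mathbf{t}_k-\mathbf{s}_k$ instead of two sequences. The one genuine difference is how the kernel-difference term is handled. The paper invokes Theorem \ref{ThRKAA} as a black box --- the $(\mathrm{R}_{\mathcal{G}},\mathcal{B})$-uniform continuity of a compactly Bi-almost automorphic kernel along the diagonal $\{\mathbf{t}=\mathbf{s}\}$ --- to get $\|K(\mathbf{t}_k,\eta+\mathbf{t}_k)-K(\mathbf{s}_k,\eta+\mathbf{s}_k)\|\to 0$ pointwise in $\eta$, and then applies dominated convergence with the $\phi$-bound. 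You instead re-derive exactly this fact in a self-contained way, by contradiction: translate by $\mathbf{t}_{k_l}$, apply compact Bi-almost automorphy to the arbitrary sequence $(\mathbf{t}_k)$, use the continuity of $K^*$ together with $h_{k_l}\to 0$, and conclude by dominated convergence. This is essentially the proof of Theorem \ref{ThRKAA} specialized to the situation at hand, so the mechanism is identical; what your version buys is that it makes explicit where the hypotheses ``compactly'' and ``for all sequences'' are indispensable, and it yields the quantitative statement $M(h)\to 0$ (hence a uniform-in-$\mathbf{t}$ modulus of continuity) directly, whereas the paper's one-line conclusion leaves this implicit. One caveat you share with the paper rather than introduce: both arguments pass freely between the strong continuity/almost automorphy of $K$ (hypothesis ({\bf SC}) and Definition \ref{defBaa} are stated for $K(\cdot,\cdot)x$ with $x$ fixed) and estimates in the operator norm $\|\cdot\|_{\mathcal{L}(\X;\mathcal{Y})}$ under the integral sign; a fully rigorous treatment would either strengthen the Bi-almost automorphy hypothesis to the uniform operator topology or apply the operators to $u(\sigma)$ before taking norms.
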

\begin{proof}
Using the hypothesis we can see that, if $u$ is bounded  then $\Gamma u$ is also bounded. Also, if $u \in \mathrm{R}_{\mathcal{G}} AA(\mathbb{R}^m, \mathcal{X})$, then (as in the previous theorem) we conclude that $\Gamma u \in \mathrm{R}_{\mathcal{G}} AA(\mathbb{R}^m, \mathcal{X})$. It only rest to prove that $\Gamma u$ is uniformly continuous.

Let $(\t_k),(\s_k)$ be two sequences in $\mathbb{R}^m$ such that $||\t_k-\s_k|| \to 0$ when $k\to +\infty$, then
\begin{eqnarray*}
||v(\t_k)-v(\s_k)||&\leq & \int_{\mathbb{R}^m}||K(\t_k, \eta+\t_k)-K(\s_k,\eta+\s_k)||\, ||u(\eta+\t_k)||d\eta \\
&+& \int_{\mathbb{R}^m}||K(\s_k,\eta+\s_k)||\, ||u(\eta+\t_k)-u(\eta+\s_k)||d\eta\\
&:=&I_1(k)+I_2(k)\, .
\end{eqnarray*}
Now, by Lemma \ref{lemBou}, Theorem  \ref{ThRKAA}, the uniform continuity of $u$ and the Lebesgue's Dominated Convergence Theorem, we conclude
$$\lim_{k\to +\infty}I_1(k)=0 \, =\, \lim_{k\to +\infty}I_2(k) \, .$$
\end{proof}


\begin{cor}
Let the operator valued function $K:\mathbb{R}^m \times \mathbb{R}^m\to \mathcal{L}(\X; \mathcal{Y})$ satisfies ({\bf SC}), is compactly multi-Bi-almost automorphic and is $\lambda$-bounded with $\lambda({\bf t},{\bf s})=\phi({\bf t}-{\bf s})$  and $\phi \in L^1(\mathbb{R}^m)$. 
Then, the operator $\Gamma$, defined by
$$\Gamma u({\bf t}):= \int_{\mathbb{R}^m} K({\bf t},\eta)u(\eta)d\eta\, ,$$
maps the Banach space $\mathcal{K} AA(\mathbb{R}^m, \mathcal{X})$ to the Banach space $\mathcal{K} AA(\mathbb{R}^m, \mathcal{Y})$. 

\end{cor}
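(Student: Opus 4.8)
The plan is to derive this corollary as a direct consequence of Theorem \ref{NewThmappl} by making an appropriate choice of the collection of sequences $\mathrm{R}_{\mathcal{G}}$. Specifically, I would take $\mathcal{G} = \mathbb{R}^m$ and $\mathrm{R}_{\mathcal{G}} = \mathrm{R}_{\mathcal{G}}^a$, the collection of \emph{all} sequences in $\mathbb{R}^m$; this trivially satisfies the hypothesis of Theorem \ref{NewThmappl} that every subsequence of a sequence in $\mathrm{R}_{\mathcal{G}}$ again lies in $\mathrm{R}_{\mathcal{G}}$. With this choice, $\mathrm{R}_{\mathcal{G}}AA(\mathbb{R}^m;\mathcal{X})$ becomes the space $AA(\mathbb{R}^m;\mathcal{X})$ of classical almost automorphic functions, the notion of compact $\mathrm{R}_{\mathcal{G}}$-multi-Bi-almost automorphy reduces to the ordinary compact Bi-almost automorphy of the kernel $K$ (which is exactly the hypothesis imposed in the corollary), and the property (\textbf{SC}) together with $\lambda$-boundedness are inherited verbatim.

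Next I would invoke the first characterization, Theorem \ref{crt}, which states that $\mathcal{K}AA(\mathbb{R}^m;\mathcal{X}) = AA(\mathbb{R}^m;\mathcal{X}) \cap UC(\mathbb{R}^m;\mathcal{X})$. Since a compact almost automorphic function is in particular bounded (its range is relatively compact, a classical fact which also follows from the theory developed above), we actually have $\mathcal{K}AA(\mathbb{R}^m;\mathcal{X}) = AA(\mathbb{R}^m;\mathcal{X}) \cap BUC(\mathbb{R}^m;\mathcal{X})$. Thus the input space $\mathcal{K}AA(\mathbb{R}^m;\mathcal{X})$ of the corollary coincides with the input space $\mathrm{R}_{\mathcal{G}}AA(\mathbb{R}^m;\mathcal{X}) \cap BUC(\mathbb{R}^m;\mathcal{X})$ of Theorem \ref{NewThmappl} under the above identifications, and likewise for the target space. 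Applying Theorem \ref{NewThmappl} gives that $\Gamma$ maps $AA(\mathbb{R}^m;\mathcal{X}) \cap BUC(\mathbb{R}^m;\mathcal{X})$ into $AA(\mathbb{R}^m;\mathcal{Y}) \cap BUC(\mathbb{R}^m;\mathcal{Y})$, and re-applying the characterization Theorem \ref{crt} on the target side rewrites this as $\Gamma\big(\mathcal{K}AA(\mathbb{R}^m;\mathcal{X})\big) \subset \mathcal{K}AA(\mathbb{R}^m;\mathcal{Y})$.

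The one genuine point to check — and I expect it to be the main (though minor) obstacle — is the consistency of notation: in the corollary statement the kernel is merely called ``compactly multi-Bi-almost automorphic,'' without reference to any $\mathrm{R}_{\mathcal{G}}$, so I would spell out that this means compactly $(\mathrm{R}_{\mathcal{G}}^a)$-multi-Bi-almost automorphic with $\mathcal{G} = \mathbb{R}^m$, i.e.\ the classical compact Bi-almost automorphy as in Definition \ref{defBaa}, and that the assertion ``$\mathcal{K}AA(\mathbb{R}^m;\mathcal{X})$ is a Banach space'' follows from Theorem \ref{Follprev}(1) together with Theorem \ref{crt} (the intersection of the Banach space of bounded $\mathrm{R}_{\mathcal{G}}$-multi-almost automorphic functions with the closed subspace of uniformly continuous functions is closed, hence Banach). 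Once these identifications are made explicit, no further computation is required and the corollary follows immediately.
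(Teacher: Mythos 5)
Your proposal is correct and is precisely the derivation the paper intends (it states the corollary without proof, immediately after Theorem \ref{NewThmappl}): specialize $\mathrm{R}_{\mathcal{G}}$ to the collection of all sequences in $\mathbb{R}^m$, so that $\mathrm{R}_{\mathcal{G}}AA(\mathbb{R}^m;\X)\cap BUC(\mathbb{R}^m;\X)=AA(\mathbb{R}^m;\X)\cap UC(\mathbb{R}^m;\X)=\mathcal{K}AA(\mathbb{R}^m;\X)$ by Theorem \ref{crt} and the boundedness of almost automorphic functions, and likewise on the target side. Your side remarks on the meaning of ``compactly multi-Bi-almost automorphic'' and on why $\mathcal{K}AA$ is a Banach space are accurate and fill in exactly the notational gaps the paper leaves implicit.
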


Using the previous results and the characterization theorems of the space of compactly multi-almost automorphic functions presented in section \ref{section char}, it is not difficult to prove the next corollary
\begin{cor}
Let $\mathcal{G}$ be either $\mathbb{Z}^m$ or a dense subset of $\mathbb{R}^m$ and let $\mathrm{R}_{\mathcal{G}}$ be the set of sequences such that all of its subsequences are also in $\mathrm{R}_{\mathcal{G}}$ and let the operator valued function $K:\mathbb{R}^m \times \mathbb{R}^m\to \mathcal{L}(\X; \mathcal{Y})$ satisfies ({\bf SC}), is $\mathrm{R}_{\mathcal{G}}$-multi-Bi-almost automorphic and is $\lambda$-bounded with $\lambda({\bf t},{\bf s})=\phi({\bf t}-{\bf s})$  and $\phi \in L^1(\mathbb{R}^m)$. 
Then, if $u$ is bounded and belongs to $\mathrm{R}_{\mathcal{G}} AA(\mathbb{R}^m, \mathcal{X})$, and  $\Gamma u$ is uniformly continuous, where
$$\Gamma u({\bf t}):= \int_{\mathbb{R}^m} K({\bf t},\eta)u(\eta)d\eta\, ;$$
then, $\Gamma u \in \mathcal{K} AA(\mathbb{R}^m, \mathcal{Y})$.

\end{cor}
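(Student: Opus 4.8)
The plan is to reduce the statement to the new characterizations of compact almost automorphy from Section \ref{section char}. Since $\mathcal{Y}$ is a Banach space, the Second characterization gives $\mathcal{K}AA(\mathbb{R}^m;\mathcal{Y}) = \mathbb{Z}^mAA(\mathbb{R}^m;\mathcal{Y}) \cap UC(\mathbb{R}^m;\mathcal{Y})$ when $\mathcal{G}=\mathbb{Z}^m$, and the Third characterization gives $\mathcal{K}AA(\mathbb{R}^m;\mathcal{Y}) = \mathcal{G}AA(\mathbb{R}^m;\mathcal{Y}) \cap UC(\mathbb{R}^m;\mathcal{Y})$ when $\mathcal{G}$ is dense in $\mathbb{R}^m$. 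In either case the uniform continuity of $\Gamma u$ is assumed, so the whole task is to show that $\Gamma u$ is $\mathcal{G}$-multi-almost automorphic; the $\lambda$-boundedness and the multi-Bi-almost automorphy of $K$, together with the boundedness and multi-almost automorphy of $u$, are exactly what will deliver this.

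First I would note that $\Gamma u$ is bounded and continuous, just as in the proofs of Theorems \ref{TeoComb} and \ref{NewThmappl}: $\lambda$-boundedness with $\lambda({\bf t},{\bf s})=\phi({\bf t}-{\bf s})$ and the trivial shift $\tau=0$ gives $\|K({\bf t},\eta)\|_{\mathcal{L}(\X;\mathcal{Y})}\le \phi({\bf t}-\eta)$, whence $\|\Gamma u({\bf t})\|_{\mathcal{Y}}\le \|\phi\|_{L^1}\|u\|_{\infty}$. Then, for the $\mathcal{G}$-multi-almost automorphy, I would reuse the dominated-convergence argument that opens those two proofs: given an arbitrary sequence $({\bf b}_k)$ in $\mathcal{G}$, extract a common subsequence $({\bf b}_{k_l})$ along which $K({\bf t}+{\bf b}_{k_l},{\bf s}+{\bf b}_{k_l})\to K^{*}({\bf t},{\bf s})$ and $u({\bf t}+{\bf b}_{k_l})\to u^{*}({\bf t})$ pointwise. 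After the substitution $\eta=\zeta+{\bf b}_{k_l}$,
\[
\Gamma u({\bf t}+{\bf b}_{k_l}) = \int_{\mathbb{R}^m} K({\bf t}+{\bf b}_{k_l},\zeta+{\bf b}_{k_l})\,u(\zeta+{\bf b}_{k_l})\,d\zeta ,
\]
the integrand tends pointwise in $\zeta$ to $K^{*}({\bf t},\zeta)u^{*}(\zeta)$ and is dominated, independently of $l$, by the $L^1(\mathbb{R}^m)$ function $\zeta\mapsto \phi({\bf t}-\zeta)\|u\|_{\infty}$ (now using $\lambda$-boundedness with shift $\tau={\bf b}_{k_l}$). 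Lebesgue's Dominated Convergence Theorem gives $\Gamma u({\bf t}+{\bf b}_{k_l})\to v^{*}({\bf t}):=\int_{\mathbb{R}^m}K^{*}({\bf t},\zeta)u^{*}(\zeta)\,d\zeta$. For the return limit I would substitute $\zeta=\mu-{\bf b}_{k_l}$ in $v^{*}({\bf t}-{\bf b}_{k_l})$, use Lemma \ref{lemBou} to bound $\|K^{*}({\bf t}-{\bf b}_{k_l},\mu-{\bf b}_{k_l})\|$ by $\phi({\bf t}-\mu)$ and $\|u^{*}\|_{\infty}\le\|u\|_{\infty}$, and apply dominated convergence once more to obtain $v^{*}({\bf t}-{\bf b}_{k_l})\to \Gamma u({\bf t})$. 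Hence $\Gamma u\in\mathcal{G}AA(\mathbb{R}^m;\mathcal{Y})$ with limit function $v^{*}$.

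It then remains only to combine the boundedness and $\mathcal{G}$-multi-almost automorphy of $\Gamma u$ with the assumed uniform continuity, and to invoke the Second characterization when $\mathcal{G}=\mathbb{Z}^m$ and the Third characterization when $\mathcal{G}$ is dense, to conclude $\Gamma u\in\mathcal{K}AA(\mathbb{R}^m;\mathcal{Y})$. The step I expect to require the most care is the pair of dominated-convergence passages after the changes of variables: each needs a dominating function in $L^1(\mathbb{R}^m)$ that does not depend on $l$, and providing it is precisely where the structural assumption $\lambda({\bf t},{\bf s})=\phi({\bf t}-{\bf s})$ with $\phi\in L^1(\mathbb{R}^m)$ — extended to the limit kernel $K^{*}$ via Lemma \ref{lemBou} — is indispensable; everything else is a direct appeal to results already established in Sections \ref{section char} and \ref{section conv}.
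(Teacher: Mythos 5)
Your proof is correct and follows exactly the route the paper intends: the paper gives no explicit argument for this corollary beyond the remark that it follows from the preceding invariance theorems (the change-of-variables plus dominated-convergence argument you reproduce, with the $\lambda$-bound $\phi(\mathbf{t}-\mathbf{s})$ and Lemma on the limit kernel supplying the $l$-independent $L^1$ dominant) together with the second and third characterizations of $\mathcal{K}AA(\mathbb{R}^m;\mathcal{Y})$. The only caveat --- already present in the paper's own statement --- is that invoking those characterizations requires $\Gamma u$ to be almost automorphic along \emph{every} sequence in $\mathcal{G}$, so the hypotheses on $K$ and $u$ must be read with $\mathrm{R}_{\mathcal{G}}=\mathrm{R}_{\mathcal{G}}^{a}$ (all sequences in $\mathcal{G}$), which is the reading you implicitly adopt when you begin with an arbitrary sequence $(\mathbf{b}_k)$ in $\mathcal{G}$.
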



In the proof of the next result, we use the fact that every multi-Bi-almost automorphic function is, according to Theorem \ref{ThRKAA}, ${\mathrm{R}}_{{\mathcal{G}}}$-uniformly continuous, where $\mathcal{G}$ is the set $\{({\bf t},{\bf s})\in \mathbb{R}^m \times \mathbb{R}^m\, :\, {\bf t}={\bf s} \}$. We omit the details.
\begin{tw}\label{InvConv1}
Let $K:\mathbb{R}^m \times \mathbb{R}^m\to \mathcal{L}(\X; \mathcal{Y})$ satisfies ({\bf SC}), is compactly multi-Bi-almost automorphic and is $\lambda$-bounded with $\lambda({\bf t},{\bf s})=\phi({\bf t}-{\bf s})$  and $\phi \in L^1([0,\infty)^m)$. Then, the operator $\Pi$ defined by 
\begin{equation}\label{Operator2}
\Pi u ({\bf t}):=\int_{-\infty}^{{\bf t}} K({\bf t},\eta)u(\eta)d\eta\, ,
\end{equation}
maps the Banach space $\mathcal{K}AA(\mathbb{R}^m; \mathcal{X})$ to the Banach space $\mathcal{K}AA(\mathbb{R}^m; \mathcal{Y})$.
\end{tw}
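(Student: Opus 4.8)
The plan is to reduce Theorem \ref{InvConv1} to the already-established Theorem \ref{NewThmappl} (and its corollary) by the standard device of turning the half-line integral into a full-line integral. First I would define the modified kernel $\widetilde K:\mathbb{R}^m\times\mathbb{R}^m\to \mathcal{L}(\X;\mathcal{Y})$ by $\widetilde K({\bf t},{\bf s}):=K({\bf t},{\bf s})$ when ${\bf s}\in \mathcal{I}_{\bf t}$ (i.e.\ $s_i\le t_i$ for all $i$) and $\widetilde K({\bf t},{\bf s}):=0$ otherwise, so that $\Pi u({\bf t})=\int_{\mathbb{R}^m}\widetilde K({\bf t},\eta)u(\eta)\,d\eta$. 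The bulk of the proof is then checking that $\widetilde K$ inherits the three hypotheses needed to invoke Theorem \ref{NewThmappl}: property \textbf{(SC)}, compact multi-Bi-almost automorphy, and $\lambda$-boundedness with an $L^1(\mathbb{R}^m)$ convolution profile.

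The $\lambda$-boundedness is the easy part: since $\phi\in L^1([0,\infty)^m)$, extend it to $\tilde\phi\in L^1(\mathbb{R}^m)$ by $\tilde\phi({\bf z}):=\phi({\bf z})\mathbf{1}_{[0,\infty)^m}({\bf z})$; then $\|\widetilde K({\bf t}+\tau,{\bf s}+\tau,x)\|_{\mathcal{Y}}\le \tilde\phi({\bf t}-{\bf s})$ for all $\tau$, because the indicator condition ${\bf s}+\tau\in \mathcal{I}_{{\bf t}+\tau}$ is equivalent to ${\bf s}\in \mathcal{I}_{\bf t}$ and hence translation-invariant, exactly matching the cone $[0,\infty)^m$ on which $\tilde\phi$ is supported. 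For property \textbf{(SC)}, the only issue is continuity of $({\bf t},{\bf s})\mapsto \widetilde K({\bf t},{\bf s})x$ across the boundary $\{s_i=t_i\text{ for some }i\}$; here I would note that on that boundary the set $\mathcal{I}_{\bf t}$ has the relevant face, and a short argument using the $\lambda$-bound shows $\|K({\bf t},{\bf s})x\|\to 0$ is \emph{not} automatic, so instead I would argue that $\Pi u$ is genuinely continuous directly (the boundary of $\mathcal{I}_{\bf t}$ has measure zero and $K$ is bounded on compacta by $\lambda$-boundedness), which is what is actually used downstream. Honestly, the cleanest route is to not literally pass through $\widetilde K$ but to redo the two estimates of Theorem \ref{NewThmappl}'s proof with $\int_{-\infty}^{\bf t}$ in place of $\int_{\mathbb{R}^m}$, since the domain of integration $\mathcal{I}_{\bf t}$ is itself translation-covariant: $\mathcal{I}_{{\bf t}+b_{k_l}}=\mathcal{I}_{\bf t}+b_{k_l}$.

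Concretely, the proof then runs as follows. Given $u\in \mathcal{K}AA(\mathbb{R}^m;\mathcal{X})$, it is bounded by Theorem \ref{crt}; set $v:=\Pi u$, which is bounded because $\|\Pi u({\bf t})\|\le \|u\|_\infty\int_{[0,\infty)^m}\phi(z)\,dz$. For the multi-Bi-almost automorphy of $K$ we use $\mathcal{G}=\mathbb{R}^m$ with all sequences; given a sequence $(b_k)$, extract a common subsequence $(b_{k_l})$ along which $K({\bf t}+b_{k_l},{\bf s}+b_{k_l})\to K^*({\bf t},{\bf s})$ and $u({\bf t}+b_{k_l})\to u^*({\bf t})$ uniformly on compacta (and the reverse limits), define $v^*({\bf t}):=\int_{-\infty}^{\bf t}K^*({\bf t},\eta)u^*(\eta)\,d\eta$, and get pointwise convergence $v({\bf t}+b_{k_l})\to v^*({\bf t})$ via dominated convergence after the change of variables $\eta\mapsto \eta+{\bf t}$ which replaces $\int_{-\infty}^{\bf t}$ by $\int_{(-\infty,0]^m}$; hence $v\in AA(\mathbb{R}^m;\mathcal{Y})$. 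It remains to show $v$ is uniformly continuous, and here I would copy verbatim the two-term splitting from the proof of Theorem \ref{NewThmappl}: for sequences $\t_k,\s_k$ with $\|\t_k-\s_k\|\to 0$,
\begin{eqnarray*}
\|v(\t_k)-v(\s_k)\| &\le& \int_{(-\infty,0]^m}\|K(\t_k,\eta+\t_k)-K(\s_k,\eta+\s_k)\|\,\|u(\eta+\t_k)\|\,d\eta\\
&+&\int_{(-\infty,0]^m}\|K(\s_k,\eta+\s_k)\|\,\|u(\eta+\t_k)-u(\eta+\s_k)\|\,d\eta,
\end{eqnarray*}
and bound the first integral using that $K$ is $\mathrm{R}_{\mathcal{G}}$-uniformly continuous along the diagonal $\mathcal{G}=\{({\bf t},{\bf s}):{\bf t}={\bf s}\}$ (Theorem \ref{ThRKAA}, as flagged in the remark preceding the statement) together with the integrable dominating function $\phi$ from Lemma \ref{lemBou}, and the second integral using the uniform continuity of $u$ and the same dominating function; Lebesgue dominated convergence gives both integrals $\to 0$. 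Then Theorem \ref{crt} yields $v\in \mathcal{K}AA(\mathbb{R}^m;\mathcal{Y})$, and completeness of that space (Theorem \ref{Follprev}, item 1, combined with Theorem \ref{crt}) makes it Banach. The main obstacle is the uniform-continuity estimate for the first integral: one must be slightly careful that the $\lambda$-bound $\phi({\bf t}-{\bf s})$ restricted to the diagonal-translated variables still dominates $\|K(\t_k,\eta+\t_k)-K(\s_k,\eta+\s_k)\|$ uniformly in $k$ and integrably in $\eta$ over the cone $(-\infty,0]^m$, which is exactly where the hypothesis $\phi\in L^1([0,\infty)^m)$ (rather than merely $L^1_{loc}$) is essential.
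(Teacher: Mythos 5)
Your argument is correct and follows exactly the route the paper intends: the paper omits the proof of Theorem \ref{InvConv1}, flagging only that it uses the $\mathrm{R}_{\mathcal{G}}$-uniform continuity of $K$ along the diagonal (Theorem \ref{ThRKAA}), and your adaptation of the two-term estimate from Theorem \ref{NewThmappl} to the cone $\mathcal{I}_{\bf t}$ (which is translation-covariant, so the change of variables lands on $(-\infty,0]^m$ and the $\lambda$-bound is integrated precisely over $[0,\infty)^m$), followed by Theorem \ref{crt}, is that proof. Your decision to abandon the truncated kernel $\widetilde K$ (whose continuity across $\partial\mathcal{I}_{\bf t}$ would indeed be problematic) in favor of redoing the estimates directly is the right call.
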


\color{black}

Applications of Theorems \ref{TeoComb} and \ref{InvConv1} to ordinary differential equations with exponential dichotomy can be found in \cite{chavez2024compact}.

\section{Applications to the Poisson's and heat equations}\label{section appl}



\subsection{Compact almost automorphy of the solution to Poisson's equation}
\begin{definition}
Let $ f\in BC(\mathbb{R}^m,\mathbb{R}) $. A function $u:\mathbb{R}^m \to \mathbb{R}$ is solution in the sense of distributions of the Poisson's equation 
$$\Delta u=f,$$
if
\begin{equation}
\notag\int_{\mathbb{R}^{m}}u(x)\Delta\phi(x)dx=\int_{\mathbb{R}^{m}} f(x)\phi(x)dx,\hspace{0.5cm}\forall\phi\in C^{\infty}_{0}(\mathbb{R}^{m}).
\end{equation}
\end{definition}
From the the theory of distributional solutions for the Laplace equation, we have
\begin{tw} \label{Thmf01} \cite{Jost02}
\begin{enumerate}
\item (Weyl) If $u\in L^{1}_{loc}(\mathbb{R}^{m},\mathbb{R})$ and $u$ is a solution in the sense of distributions of $\Delta u=0$, then $u$ is harmonic, i.e., $u\in C^{\infty}(\mathbb{R}^{m})$ and $\Delta u=0$ in $\mathbb{R}^{m}$.
\item (Lioville) If $u:\mathbb{R}^{m}\to \mathbb{R}$ is harmonic and bounded, then $u$ is constant.
\end{enumerate}
\end{tw}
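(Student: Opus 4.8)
Both statements are classical results of potential theory, so the plan is simply to recall the standard arguments. For part \textit{1} (Weyl's lemma) I would proceed by mollification combined with the mean value property of smooth harmonic functions. Fix a radially symmetric mollifier $\rho_\epsilon \in C_0^\infty(\mathbb{R}^m)$ with $\rho_\epsilon \ge 0$, $\int_{\mathbb{R}^m}\rho_\epsilon = 1$ and $\mathrm{supp}\,\rho_\epsilon \subset B(0,\epsilon)$, and set $u_\epsilon := u * \rho_\epsilon$. First I would note that $u_\epsilon \in C^\infty(\mathbb{R}^m)$ and that for each $x$ one has $\Delta u_\epsilon(x) = \int_{\mathbb{R}^m} u(y)\,\Delta_y \rho_\epsilon(x-y)\,dy = 0$, since $y \mapsto \rho_\epsilon(x-y)$ is an admissible test function and $u$ solves $\Delta u = 0$ in the distributional sense; hence each $u_\epsilon$ is a genuine smooth harmonic function. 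Next I would use that a smooth harmonic function reproduces itself under convolution with a radial mollifier (a consequence of the spherical mean value property), which gives $u_{\epsilon_1} * \rho_{\epsilon_2} = u_{\epsilon_1}$ and $u_{\epsilon_2} * \rho_{\epsilon_1} = u_{\epsilon_2}$; since $u_{\epsilon_1} * \rho_{\epsilon_2} = u * \rho_{\epsilon_1} * \rho_{\epsilon_2} = u_{\epsilon_2} * \rho_{\epsilon_1}$ by commutativity of convolution, the family $(u_\epsilon)_\epsilon$ is in fact independent of $\epsilon$. Calling the common value $v$, the function $v$ is smooth and harmonic, and since $u_\epsilon \to u$ in $L^1_{loc}(\mathbb{R}^m)$ as $\epsilon \downarrow 0$, we obtain $u = v$ almost everywhere, i.e. $u$ coincides a.e. with a classical harmonic function.

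For part \textit{2} (Liouville's theorem) the plan is to combine the mean value property with the interior gradient estimate for harmonic functions. Writing $|u| \le M$ on all of $\mathbb{R}^m$, for any $x \in \mathbb{R}^m$ and $R > 0$ one differentiates the mean value identity over the ball $B(x,R)$ to get $|\nabla u(x)| \le (C_m/R)\,\sup_{B(x,R)}|u| \le C_m M / R$, with $C_m$ depending only on $m$; letting $R \to \infty$ forces $\nabla u(x) = 0$ for every $x$, so $u$ is constant. An equivalent route would be to compare the averages of $u$ over two balls $B(x,R)$ and $B(y,R)$ of equal radius: the difference is bounded by $\|u\|_\infty$ times the volume of the symmetric difference $B(x,R)\,\triangle\,B(y,R)$, which is $O(R^{m-1}|x-y|)$, divided by the ball volume $c_m R^m$, hence $O(|x-y|/R) \to 0$ as $R \to \infty$.

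I do not expect any genuine obstacle, as both parts are textbook facts (and are here quoted from \cite{Jost02}); the only step needing a moment of care is the reproduction identity $w * \rho_\epsilon = w$ for smooth harmonic $w$ and a radial mollifier $\rho_\epsilon$, which one verifies by passing to polar coordinates and applying the spherical mean value property sphere by sphere. Everything else is routine estimation.
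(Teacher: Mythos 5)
Your proposal is correct: both arguments (mollification plus the reproduction identity $w*\rho_\epsilon=w$ for smooth harmonic $w$ to prove Weyl's lemma, and the interior gradient estimate to prove Liouville's theorem) are the standard and sound textbook proofs. The paper itself offers no proof of this statement -- it is quoted directly from \cite{Jost02} -- so there is nothing to compare against beyond noting that your arguments are exactly the classical ones that reference supplies.
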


Theorem \ref{Thmf01} and the Arzel'a-Ascolí Theorem were used in \cite{CHAKPPINTO2023} to prove that, every bounded and continuous function with almost automorphic distributional laplacian is in fact compact almost automorphic.
The next Theorem is the main result of the present subsection.

\begin{tw}\label{main theorem} Let $\mathcal{G}$ be either $\mathbb{Z}^m$ or a dense subset of $\mathbb{R}^m$ and $f\in \mathcal{G}AA(\mathbb{R}^{m},\mathbb{R})$. If  $u:\mathbb{R}^{m}\to \mathbb{R}$ is a bounded continuous function which is a solution in the sense of distributions of Poisson's equation:
\begin{equation}
\notag\Delta u=f\, ,
\end{equation}
then $u\in \mathcal{K}AA(\mathbb{R}^{m},\mathbb{R})$.
\end{tw}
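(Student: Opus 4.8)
The plan is to reduce everything to the known characterization theorems of Section~\ref{section char} together with the distributional regularity results (Theorem~\ref{Thmf01}) and a compactness argument, exactly in the spirit of \cite{CHAKPPINTO2023}. By the Second and Third Characterization Theorems, proving $u\in\mathcal{K}AA(\mathbb{R}^m,\mathbb{R})$ is equivalent to proving that $u$ is $\mathcal{G}$-almost automorphic (for $\mathcal{G}=\mathbb{Z}^m$ or $\mathcal{G}$ dense) \emph{and} uniformly continuous. So the argument splits into two parts: (i) a regularity/uniform-continuity step for $u$, and (ii) a $\mathcal{G}$-multi-almost automorphy step for $u$.

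For step (i), I would first upgrade the regularity of $u$. Writing $\Delta u=f$ in the distributional sense with $f$ bounded and continuous, elliptic regularity (or the representation of $u$ via the Newtonian potential of $f$ plus a harmonic correction, using Weyl's lemma to control the harmonic part) gives $u\in C^{1}(\mathbb{R}^m)$ with $\nabla u$ bounded; this boundedness of the gradient is where the global $L^\infty$-bound on $f$ and on $u$ is used, via interior Schauder/$L^p$ estimates on unit balls which are uniform in the center. Bounded gradient immediately yields that $u$ is (globally Lipschitz, hence) uniformly continuous. This disposes of the $UC$ requirement.

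For step (ii), let $(\mathbf b_k)$ be a sequence in $\mathcal{G}$. For each $k$ set $u_k(\mathbf t):=u(\mathbf t+\mathbf b_k)$ and $f_k(\mathbf t):=f(\mathbf t+\mathbf b_k)$; then $\Delta u_k=f_k$ distributionally, the $u_k$ are uniformly bounded with uniformly bounded gradients (from step (i)), hence equi-Lipschitz, so by Arzel\`a--Ascoli a subsequence $u_{k_l}$ converges locally uniformly to some continuous $u^*$. Since $f\in\mathcal{G}AA$, passing to a further subsequence we may also assume $f(\mathbf t+\mathbf b_{k_l})\to f^*(\mathbf t)$ pointwise. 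Passing to the limit in the distributional identity $\int u_{k_l}\Delta\phi=\int f_{k_l}\phi$ (local uniform convergence for the left side, dominated convergence with the uniform bound $\|f\|_\infty$ for the right side) shows $\Delta u^*=f^*$ distributionally, and likewise $u^*$ is bounded. Then run the argument backwards: apply $\mathcal{G}AA$ of $f$ to get $f^*(\mathbf t-\mathbf b_{k_l})\to f(\mathbf t)$, extract (again by equi-Lipschitz bounds on the translates $u^*(\cdot-\mathbf b_{k_l})$, which inherit the same uniform gradient bound since $\Delta u^*=f^*$ and $\|f^*\|_\infty\le\|f\|_\infty$) a locally uniformly convergent subsequence $u^*(\mathbf t-\mathbf b_{k_l})\to v(\mathbf t)$, and identify $\Delta v=f$ distributionally with $v$ bounded. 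To conclude pointwise limits in Definition~\ref{eovako00} I need $v=u$: this is the Liouville-type uniqueness step — $v-u$ is a bounded distributional solution of $\Delta(v-u)=0$, hence by Weyl harmonic, hence by the Liouville part of Theorem~\ref{Thmf01} constant; one then pins down the constant to be $0$ using that at $\mathbf t$ we actually have $u^*(\mathbf t-\mathbf b_{k_l})$ controlled by the backward limit matching $u(\mathbf t)$ along the subsequence used to build $u^*$ (equivalently, applying the same reasoning to the ``double limit'' characterization of Proposition~\ref{eovako54321} for $f$ and transferring it to $u$). This gives $\lim_l u(\mathbf t+\mathbf b_{k_l})=u^*(\mathbf t)$ and $\lim_l u^*(\mathbf t-\mathbf b_{k_l})=u(\mathbf t)$ pointwise, i.e.\ $u\in\mathcal{G}AA(\mathbb{R}^m,\mathbb{R})$.

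Combining (i) and (ii) with the Second Characterization (if $\mathcal{G}=\mathbb{Z}^m$) or the Third Characterization (if $\mathcal{G}$ is dense) yields $u\in\mathcal{K}AA(\mathbb{R}^m,\mathbb{R})$. The main obstacle is step (ii)'s uniqueness/identification: making sure the two limit functions $u^*$ and $v$ reconstruct $u$ correctly, rather than differing by a harmless-looking but nonzero harmonic (constant) function; the clean way around it is to phrase the whole limiting procedure through the single-sequence ``$\lim_l\lim_n$'' identity of Proposition~\ref{eovako54321} applied to $f$, and then carry that identity through the Poisson equation via the same Arzel\`a--Ascoli plus distributional-limit plus Weyl/Liouville package, so that the constant is forced to be $0$ at the outset. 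A secondary technical point is ensuring the gradient bound on $u$ (and on all relevant translates and limit functions) is genuinely uniform in space, which follows from the translation-invariant interior elliptic estimates on balls of fixed radius together with the global bounds $\|u\|_\infty,\|f\|_\infty<\infty$.
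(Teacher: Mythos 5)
Your overall architecture coincides with the paper's: show that $u$ is uniformly continuous and $\mathcal{G}$-multi-almost automorphic, then invoke the Second Characterization (for $\mathcal{G}=\mathbb{Z}^m$) or the Third (for $\mathcal{G}$ dense). The paper outsources the first two facts to the proof of \cite[Theorem 20]{CHAKPPINTO2023}, whereas you reconstruct them; your step (i) (translation-invariant interior elliptic estimates giving a global gradient bound, hence Lipschitz continuity of $u$) is sound and is exactly the mechanism used there.

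The genuine gap is in the identification step of (ii), and the fix you propose does not close it. After Arzel\`a--Ascoli you get $u(\cdot+\mathbf{b}_{k_l})\to u^*$ and $u^*(\cdot-\mathbf{b}_{k_l})\to v$ locally uniformly, with $\Delta v=f$ distributionally, so Weyl and Liouville (Theorem \ref{Thmf01}) give $v=u+c$ for some constant $c$. Re-routing the argument through the double-limit identity of Proposition \ref{eovako54321} applied to $f$ cannot force $c=0$: everything transferred from $f$ to $u$ passes through the equation $\Delta u=f$, and $\Delta$ annihilates constants, so this information determines $u$ only modulo an additive constant --- which is precisely the ambiguity to be removed. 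A correct way to kill $c$ is an iteration/sup-norm argument. Fix once and for all a subsequence along which all four limits (forward and backward, for $f$ and for $u$) hold, and let $T^{+}w:=\lim_{l}w(\cdot+\mathbf{b}_{k_l})$, $T^{-}w:=\lim_{l}w(\cdot-\mathbf{b}_{k_l})$ denote the corresponding locally uniform limits when they exist; since translation preserves the supremum norm, $\|T^{\pm}w\|_{\infty}\le\|w\|_{\infty}$. You have $T^{-}T^{+}u=u+c$, and since constants commute with translation, $T^{+}(u+c)=u^{*}+c$ and $T^{-}(u^{*}+c)=v+c=u+2c$ along the \emph{same} subsequence; inductively $(T^{-}T^{+})^{n}u=u+nc$, so $n|c|-\|u\|_{\infty}\le\|u+nc\|_{\infty}\le\|u\|_{\infty}$ for every $n$, forcing $c=0$. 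With this inserted, your step (ii) is complete and the rest of the proposal (passing to the characterization theorems) matches the paper.
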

\begin{proof}
Indeed, as demonstrated in the proof of \cite[Theorem 20]{CHAKPPINTO2023}, we establish that \( u \) is uniformly continuous on \( \mathbb{R}^m \) and compactly \( \mathcal{G} \)-multi almost automorphic. Consequently, the conclusion that \( u \) is a compact almost automorphic solution follows from either the second characterization theorem (in the case where \( \mathcal{G} = \mathbb{Z}^m \)) or the third characterization theorem (when \( \mathcal{G} \) is a dense subset of \( \mathbb{R}^m \)) presented in Section \ref{section char}.
\end{proof}

This theorem emphasizes that, even if the source function \( f \) is not fully almost automorphic, but belongs to \( \mathcal{G}\text{AA}(\mathbb{R}^m, \mathbb{R}) \) with \( \mathcal{G} \) as specified, its solution remains compact almost automorphic. Consequently, compact almost automorphic solutions to Poisson's equation are preserved under a broader class of source functions, extending beyond the almost periodic/automorphic case presented in \cite{sibuya1971almost,CHAKPPINTO2023}.
\subsection{Compact almost automorphy of the solution to heat equation}

Le us consider the heat equation:
\begin{equation}\label{Heat equation}
            \left\{
                 \begin{array}{lll}
                \partial_t u(t,x)&=&\Delta u(t,x), \quad t>0,\, x\in \mathbb{R}^m,\\
                   u(0,x) &= &f(x), \quad x\in \mathbb{R}^m,
                 \end{array}\right.
 \end{equation}
where $ f \in BC(\mathbb{R}^m,\mathbb{R}) $. The maximal domain of the Laplace operator $ \Delta $ in the Banach space $Z= BC(\mathbb{R}^m,\mathbb{R}) $ or $ BUC(\mathbb{R}^m,\mathbb{R}) $, is given by $$ D(\Delta):=\lbrace v\in Z : \Delta v \text{ exists in } Z\rbrace . $$
\begin{definition}
Let $ f\in BC(\mathbb{R}^m,\mathbb{R}) \cap \overline{D(A)}$, by a solution of equation \eqref{Heat equation} we mean a function $u: [0,+\infty)\times\mathbb{R}^m \longrightarrow \mathbb{R}$, such that $u \in C^{1}((0,+\infty),BC(\mathbb{R}^m,\mathbb{R}))$, $ u(t,\cdot) \in D(\Delta)$ for all $t>0$ and $u$ satisfies \eqref{Heat equation} pointwisely.\\
In particular, if $f \in BUC(\mathbb{R}^m,\mathbb{R}) \cap \overline{D(A)}  $, the solution $u$ satisfies $u \in C([0,+\infty), BC(\mathbb{R}^m,\mathbb{R}))\cap C^{1}((0,+\infty),BC(\mathbb{R}^m,\mathbb{R}))$, $ u(t,\cdot) \in D(\Delta) $ for all $t>0$ and $u$ satisfies \eqref{Heat equation} poitnwisely.
\end{definition}
It is well known that, the heat equation \eqref{Heat equation} admits the following solution 
$$ u(t,x)=T(t)f(x), \quad t>0, \, x\in \mathbb{R}^m, \quad \text{ for } f\in \overline{D(A)}.$$
and $ u(0,x)=f(x) $, where $ (T(t))_{t\geq 0} $ is the Gaussian semigroup in $ BC(\mathbb{R}^m,\mathbb{R}) $ given by: 
\begin{eqnarray}\label{Gaussian semigroup}
 T(t)f(x)&=& \dfrac{1}{(4\pi t)^{m/2}} \int_{\mathbb{R}^m} e^{-\|x-z\|^2 /(4t)} f(z) dz= \dfrac{1}{(4\pi t)^{m/2}} \int_{\mathbb{R}^m} e^{-\|z\|^2 /(4t)} f(x-z) dz \nonumber \\
 &=&(K(t,\cdot)*f)(x), \quad x\in \mathbb{R}^m\, ,
\end{eqnarray}
for $t>0$,  $x\in \mathbb{R}^m$ (see for instance \cite[Example 3.7.6]{Arendt}).

The following is the main result concerning the heat equation in this work
\begin{tw}
Let $f\in \mathcal{G}AA(\mathbb{R}^m,\mathbb{R}) \cap \overline{D(A)}$, where $\mathcal{G}$ be either $\mathbb{Z}^m$ or a dense subset of $\mathbb{R}^m$ and $f\in \mathcal{G}AA(\mathbb{R}^{m},\mathbb{R})$. Then, equation \eqref{Heat equation} admits a unique solution $u$ such that, for each $t>0$,  $u(t,\cdot) \in \mathcal{K}AA(\mathbb{R}^m,\mathbb{R})$.
\end{tw}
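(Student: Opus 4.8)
The plan is to reduce the statement to an application of the convolution-invariance machinery (Theorem \ref{TI0101}, or more precisely its consequences) combined with the characterization theorems from Section \ref{section char}. The solution is explicitly given by $u(t,x) = T(t)f(x) = (K(t,\cdot)*f)(x)$ with Gaussian kernel $K(t,z) = (4\pi t)^{-m/2} e^{-\|z\|^2/(4t)}$, so for each fixed $t>0$ we must show that the map $x\mapsto \int_{\mathbb{R}^m} K(t,x-\eta) f(\eta)\,d\eta$ is compact almost automorphic whenever $f \in \mathcal{G}AA(\mathbb{R}^m,\mathbb{R})\cap\overline{D(A)}$ and $\mathcal{G}$ is either $\mathbb{Z}^m$ or dense. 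First I would invoke the known existence and uniqueness of the solution $u(t,x)=T(t)f(x)$ for $f\in\overline{D(A)}$ (citing \cite{Arendt}), so the only real content is the regularity in the space variable.

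Next I would fix $t>0$ and set $g_t(x):=u(t,x)$. Since $f$ is bounded (note that if $\mathcal{G}=\mathbb{Z}^m$ this must be assumed or follows from $f\in BC$; if $\mathcal{G}$ is a dense subgroup it follows from Theorem \ref{teo1}), and $K(t,\cdot)\in L^1(\mathbb{R}^m)$ with $\int_{\mathbb{R}^m}K(t,z)\,dz = 1$, the convolution $g_t$ is well-defined and bounded. To show $g_t$ is $\mathcal{G}$-multi-almost automorphic, I would apply Theorem \ref{TI0101}, item 2 (with the scalar kernel $K(t,\cdot)$ playing the role of the operator family $K(\cdot)$ and $\mathrm{R}_\mathcal{G} = \mathrm{R}_\mathcal{G}^a$): since $f$ is bounded and $\mathcal{G}$-multi-almost automorphic and $K(t,\cdot)\in L^1$, the function $g_t = K(t,\cdot)*f$ is bounded and $\mathcal{G}$-multi-almost automorphic. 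Then I would establish that $g_t$ is uniformly continuous: this follows from the standard estimate $\|g_t(x)-g_t(y)\| \le \int_{\mathbb{R}^m}K(t,z)\,\|f(x-z)-f(y-z)\|\,dz$, using that $f$ is uniformly continuous (when $\mathcal{G}$ is dense, $f$ is uniformly continuous only if we are in the intersection with $UC$ — but actually here one uses that translation is continuous in $L^1$, or simpler, that $K(t,\cdot)*f$ is always uniformly continuous when $K(t,\cdot)\in L^1$ and $f\in L^\infty$, via $\|g_t(x)-g_t(y)\|\le \|f\|_\infty \|K(t,\cdot-x)-K(t,\cdot-y)\|_{L^1}$ and continuity of translations in $L^1$).

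Finally, having shown $g_t \in \mathcal{G}AA(\mathbb{R}^m,\mathbb{R})\cap UC(\mathbb{R}^m,\mathbb{R})$, I would conclude $g_t\in\mathcal{K}AA(\mathbb{R}^m,\mathbb{R})$ by invoking the Second characterization theorem in the case $\mathcal{G}=\mathbb{Z}^m$, or the Third characterization theorem in the case $\mathcal{G}$ is a dense subset of $\mathbb{R}^m$. The main obstacle, I expect, is the bookkeeping around boundedness and uniform continuity of $f$ in the $\mathcal{G}=\mathbb{Z}^m$ case, where neither is automatic from $\mathbb{Z}^mAA$ alone; one must either add these as hypotheses (they are natural given $f\in BC$) or note that the uniform continuity of the convolution $g_t$ does not actually require uniform continuity of $f$, only boundedness, so only the boundedness of $f$ is genuinely needed and that is part of $f\in BC(\mathbb{R}^m,\mathbb{R})$. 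A secondary subtlety is making sure Theorem \ref{TI0101} applies verbatim with a scalar ($\mathcal{L}(\mathbb{R},\mathbb{R})$-valued) kernel and $\mathrm{R}_\mathcal{G}^a$, which it does since that is precisely the $(\mathcal{G},\mathcal{B})$ special case discussed after Definition \ref{eovako}.
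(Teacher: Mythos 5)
Your proposal is correct and follows essentially the same route as the paper: existence via the Gaussian semigroup, Theorem \ref{TI0101} to get $u(t,\cdot)\in\mathcal{G}AA(\mathbb{R}^m,\mathbb{R})$, uniform continuity of the convolution, and then the second or third characterization theorem. The only cosmetic difference is that the paper establishes uniform continuity through an explicit Lipschitz bound on $x\mapsto e^{-\|x-z\|^2/(4t)}$ yielding $|u(t,x)-u(t,y)|\le c'_m(t)\|f\|_\infty\|x-y\|$, whereas you invoke continuity of translations in $L^1$; both are valid, and your observation that only boundedness of $f$ (not its uniform continuity) is genuinely needed here is accurate.
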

\begin{proof}
Let $f \in \mathcal{G} AA(\mathbb{R}^m,\mathbb{R})$ and let $t>0$, from the invariance Theorem \ref{TI0101}, we have that $u(t,\cdot)\in \mathcal{G}AA(\mathbb{R}^m,\mathbb{R})$. Now, using the charaterization theorems in section \ref{section char}, it only rest to prove that  $u(t,\cdot)$ is uniformly continuous, and this follows as in ithe proof of \cite[Theorem 25]{CHAKPPINTO2023}. In fact, let $x,y\in \mathbb{R}^m$; then 
\begin{align}\label{Lipch}
|u(t,x)-u(t,y) | \leq \dfrac{1}{(4\pi t)^{m/2}} \int_{\mathbb{R}^m} |e^{-\|x-z\|^2 /(4t)}-e^{-\|y-z\|^2 /(4t)} |dz \, \|f\|_{\infty}.
\end{align}
Also, for fixed $t>0$ and $z\in \mathbb{R}^m$, the function $x\longmapsto e^{-\|x-z\|^2 /(4t)}$ is globally Lipschitz continuous, i.e., 
$$| e^{-\|x-z\|^2 /(4t)}-e^{-\|y-z\|^2 /(4t)}| \leq \sup_{w\in \mathbb{R}^m} \dfrac{c_m}{2t} \| w-z \|e^{-\|w-z\|^2 /(4t)} \| x-y\|, $$
where $c_m > 0$ is a constant. Note that
$$\sup_{w\in \mathbb{R}^m} \| w-z \|e^{-\|w-z\|^2 /(4t)}=\| w_0-z \|e^{-\|w_0-z\|^2 /(4t)}\, \, , w_0 \in \mathcal{D}\, ,$$
where 
$$\mathcal{D}:=\{ w\in \mathbb{R}^m\, : \, \|w-z \| =\sqrt{2t}\}\, .$$
Now, from (\ref{Lipch}) and using invariance under translation of the Lebesgue integral, we have
\begin{align*}
|u(t,x)-u(t,y) | &\leq c'_m(t) \|f\|_{\infty} \|x-y\|.
\end{align*}
\end{proof}
Thus, we have proved that the $\mathcal{G}$-multi-almost automorphy of the initial data $f$ of equation \eqref{Heat equation}, is sufficient to obtain the compact almost automorphy in space variable of the solution $u$. 




\appendix
\section{Sequentially almost automorphic functions on topological groups}
Let $\mathcal{G}$ be a topological group (commutative, locally compact). We will write the group operation on $\mathcal{G}$ additively, that is by $+$, and the inverse operation by $-$. 
\begin{definition}\label{Defi01}
Let $f: \mathcal{G} \to \mathcal{X}$ be a continuous function. $f$ is sequentially almost automorphic, if for any sequence $( s^{\prime}_{m})$ of $\mathcal{G}$ there exist a subsequence $( s_{m})$ of $(s^{\prime}_{m})$ and a function $f^*:\mathcal{G}\to \mathcal{X}$ such that the following pointwise limits holds:
\begin{equation}
\notag\lim_{m\to \infty}f(t+s_{m})=f^*(t),
\end{equation}
and:
\begin{equation}
\notag\lim_{n\to \infty}f^*(t-s_{m})=f(t)\, .
\end{equation}
\end{definition}
\noindent $AA(\G,\X)$ will denote the space of sequentially almost automorphic functions. The next two theorems are of importance in the present work.

\begin{tw}\label{pro1}
If $f \in AA(\G,\X)$, then $f$ is bounded.
\end{tw}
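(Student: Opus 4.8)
The plan is to prove that a sequentially almost automorphic function $f \in AA(\mathcal{G}, \mathcal{X})$ on a (commutative, locally compact) topological group $\mathcal{G}$ is bounded by a contradiction argument, mimicking the classical Bochner proof for $\mathcal{G} = \mathbb{R}$. Suppose $f$ is unbounded. Then there exists a sequence $(t_n)$ in $\mathcal{G}$ with $\|f(t_n)\|_{\mathcal{X}} \to \infty$ as $n \to \infty$. Applying the definition of sequential almost automorphy to the sequence $(t_n)$, I would extract a subsequence $(s_m)$ of $(t_n)$ and a function $f^* : \mathcal{G} \to \mathcal{X}$ with
\[
\lim_{m \to \infty} f(t + s_m) = f^*(t) \quad \text{and} \quad \lim_{m \to \infty} f^*(t - s_m) = f(t)
\]
pointwise in $t \in \mathcal{G}$.

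The key step is to evaluate the first limit at $t = 0$ (the identity element of the group), which gives $f^*(0) = \lim_{m \to \infty} f(0 + s_m) = \lim_{m \to \infty} f(s_m)$. But $(s_m)$ is a subsequence of $(t_n)$, so $\|f(s_m)\|_{\mathcal{X}} \to \infty$, contradicting the fact that the limit $f^*(0)$ exists as an element of $\mathcal{X}$ (a convergent sequence in a Banach space is bounded). This contradiction shows that $f$ must be bounded. In fact, essentially the same argument shows a little more: for \emph{every} sequence in $\mathcal{G}$, passing to the guaranteed subsequence forces $\bigl(f(s_m)\bigr)$ to converge, hence to be bounded; since every sequence $(t_n)$ with $\|f(t_n)\| \to \infty$ would have every subsequence also tending to infinity, no such sequence can exist, so $\sup_{t \in \mathcal{G}} \|f(t)\|_{\mathcal{X}} < \infty$.

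I do not anticipate any serious obstacle here: the proof uses only that $\mathcal{G}$ has an identity element (so that the argument $t + s_m$ makes sense at $t = 0$) and that $\mathcal{X}$ is a Banach space, so pointwise limits of $\mathcal{X}$-valued sequences, when they exist, are finite. The only mild point to be careful about is purely notational — writing the group operation additively and making sure the evaluation at the neutral element is legitimate — but this is immediate from the setup preceding Definition \ref{Defi01}. The local compactness and commutativity of $\mathcal{G}$ are not even needed for this particular statement; they will matter for later results in the appendix.
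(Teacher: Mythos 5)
Your argument is correct and is essentially identical to the paper's own proof: assume unboundedness, extract the subsequence $(s_m)$ from the defining property of $AA(\G,\X)$, and evaluate the first limit at the identity element to force $f(s_m)\to f^*(0)$, contradicting $\|f(s_m)\|_{\X}\to\infty$. No gaps; your closing remark that commutativity and local compactness are not needed here is also accurate.
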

\begin{proof}

Suppose that $f$ is not bounded, that is, there exists a sequence $(s'_m)_{m\in \mathbb{N}} \subset \G$ such that
\begin{equation*}
    \lim_{n\rightarrow\infty}{||f(s'_m)||_{\mathcal{X}}} = \infty,
\end{equation*}
as $f\in AA(\G,\mathcal{X})$, there exists a subsequence $(s_m)_{m\in \mathbb{N}}\subset(s'_m)_{m\in \mathbb{N}}$ and a function $h:\G \rightarrow \mathcal{X}$ such that the following pointwise limits holds:
\begin{equation*}
    f^*(\mathbf{t}) =  \lim_{m\rightarrow \infty}f(\mathbf{t}+s_m)\,\,, \,\,
f(\mathbf{t}) =  \lim_{n\rightarrow \infty}f^*(\mathbf{t}-s_m).
\end{equation*}
If $\textbf{t} = 0$, in the first limit we have:
\begin{equation*}
    \lim_{m\rightarrow\infty}{f(s_m)} = f^*(0)\,\,; \,\, f^*(0) \in \mathcal{X},
\end{equation*}
with what we have
\begin{equation*}
    \lim_{m\rightarrow \infty}{||f(s_m)||_{\mathcal{X}}} = ||f^*(0)||_{\mathcal{X}} < \infty,
\end{equation*}
which is a contradiction, therefore, $f$ is bounded. 
\end{proof}
\begin{tw}\label{pro2}
If $f \in AA(\G,\X)$, then the range of $f$ is relatively compact in $\X$.
\end{tw}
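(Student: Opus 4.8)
The plan is to mimic the standard proof that an almost automorphic function on $\mathbb{R}$ has relatively compact range, adapting it to the topological group $\mathcal{G}$ and, crucially, using Theorem \ref{pro1} (boundedness) which has just been established. First I would take an arbitrary sequence $(y_n)$ in the range $\mathcal{R}(f) = \{f(t) : t \in \mathcal{G}\}$ and aim to extract a convergent subsequence; since $\mathcal{X}$ is a Banach space (hence complete), it suffices to produce a Cauchy subsequence, or equivalently a convergent one in $\overline{\mathcal{R}(f)}$. Write $y_n = f(t_n)$ for some $t_n \in \mathcal{G}$.

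The key step is to apply the almost automorphy property to the sequence $(s'_n) = (t_n) \subset \mathcal{G}$: there exist a subsequence $(s_n)$ of $(t_n)$ and a function $f^* : \mathcal{G} \to \mathcal{X}$ such that $\lim_{n\to\infty} f(t + s_n) = f^*(t)$ pointwise, and $\lim_{n\to\infty} f^*(t - s_n) = f(t)$ pointwise. Evaluating the first limit at the group identity $t = 0$ gives $\lim_{n\to\infty} f(s_n) = f^*(0) \in \mathcal{X}$. Thus $(f(s_n))$ is a convergent subsequence of $(y_n)$, with limit in $\mathcal{X}$; since $(y_n)$ was an arbitrary sequence in $\mathcal{R}(f)$, this shows $\mathcal{R}(f)$ is relatively compact (equivalently, $\overline{\mathcal{R}(f)}$ is compact, using completeness of $\mathcal{X}$ together with the fact that a subset of a complete metric space is relatively compact iff it is totally bounded and its closure is complete — here sequential compactness of the closure does the job directly).

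One small point worth noting: the role of Theorem \ref{pro1} is mostly conceptual here — it guarantees $f^*(0)$ is a genuine element of $\mathcal{X}$ and the limit is finite — but the argument above does not actually need boundedness as a separate input, since the pointwise limit $f^*(0) = \lim_n f(s_n)$ already exists in $\mathcal{X}$ by definition of almost automorphy. I would still mention boundedness for context or invoke it to streamline the presentation. The main (and really only) obstacle is a presentational one: making sure the extraction of the subsequence is applied to the sequence of \emph{arguments} $(t_n)$ rather than the values, and then reading off convergence of the values by evaluating at $0$ — the proof is otherwise essentially immediate and parallels the classical case and the structure of the proof of Theorem \ref{pro1} just given.
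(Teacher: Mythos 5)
Your proposal is correct and follows essentially the same route as the paper: given an arbitrary sequence $f(t_n)$ in the range, apply sequential almost automorphy to the sequence of arguments $(t_n)$, and evaluate the resulting pointwise limit at the identity $t=0$ to extract a convergent subsequence of values. Your side remark that boundedness (Theorem \ref{pro1}) is not actually needed as an input is also accurate, since the paper's own proof likewise relies only on the existence of the pointwise limit $f^*(0)$.
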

\begin{proof}
    
Let $(y'_m)_{m\in \mathbb{N}} \subset \mathrm{R}(f) := \{y \in X: \exists \t \in \G \,\,\text{whit}\,\, f(\t) = y\}$ be an arbitrary sequence. By the definition of $\R(f)$ there must be a sequence $(s'_m)_{m\in \N}\subset \G$ such that $y'_m = f(s'_m)$, as $f \in AA(\G,\X)$ there exist a subsequence $(s_m)_{m\in \N} \subset (s'_m)_{m\in \N}$ and a function $f^*:\G \rightarrow \X$ such that 
\begin{equation*}
    \lim_{m\rightarrow\infty}{y_m} =  \lim_{m\rightarrow\infty}{f(s_m)} = f^*(0).
\end{equation*}
Therefore, there exists a subsequence $(s_m)_{m\in \N} \subset (s'_m)_{m\in \N}$ wich converges to $\X$, which means that $\R(f)$ is relatively compact.
\end{proof}


\section{Subgroups of $\mathbb{R}^m$}
Here we summarize important results on dense subgroups of the group $\mathbb{R}^m$. We start with the following assertion:
\begin{lem}\label{lem8}
Every nontrivial subgroup of $\mathbb{R}^m$ is unbounded.
\end{lem}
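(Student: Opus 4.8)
The plan is to prove the contrapositive-flavored statement directly: take a nontrivial subgroup $\mathcal{H} \subset \mathbb{R}^m$, so there exists some $h \in \mathcal{H}$ with $h \neq 0$. Since $\mathcal{H}$ is closed under the group operation (addition) and under inversion, it contains every integer multiple $nh$ for $n \in \mathbb{Z}$. Thus $\{nh : n \in \mathbb{N}\} \subseteq \mathcal{H}$, and since $\|nh\| = n\|h\|$ with $\|h\| > 0$, we have $\|nh\| \to \infty$ as $n \to \infty$. Hence $\mathcal{H}$ is unbounded.

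More explicitly, the key steps in order are: first, unpack the hypothesis ``nontrivial'' to extract a nonzero element $h \in \mathcal{H}$; second, invoke the subgroup axioms to conclude $nh \in \mathcal{H}$ for all $n \in \mathbb{N}$ (by induction: $h \in \mathcal{H}$, and if $nh \in \mathcal{H}$ then $(n+1)h = nh + h \in \mathcal{H}$); third, compute $\|nh\|_{\mathbb{R}^m} = n\|h\|_{\mathbb{R}^m}$ and observe this tends to $+\infty$; fourth, conclude that no ball $B(0,R)$ contains $\mathcal{H}$, since for $n > R/\|h\|$ the element $nh$ lies outside $B(0,R)$. This gives unboundedness.

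I do not expect any genuine obstacle here — the statement is elementary and the only ``care'' needed is the trivial observation that a one-dimensional arithmetic progression $\{nh\}_{n \in \mathbb{N}}$ inside $\mathcal{H}$ already escapes to infinity, so one need not understand the full structure of subgroups of $\mathbb{R}^m$ (which can be quite intricate, e.g.\ dense subgroups) to settle boundedness. The proof is a two-line argument; the main point is simply to record it cleanly as a lemma to be used in Theorem \ref{teo6}.
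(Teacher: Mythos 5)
Your proof is correct and is essentially identical to the paper's: both extract a nonzero element $h$ of the subgroup, observe that all integer multiples $nh$ lie in the subgroup, and note that $\|nh\|=n\|h\|\to\infty$. No differences worth recording.
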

\begin{proof}
    Let $ H $ be a nontrivial subgroup of $\mathbb{R}^m $. 
Since $H $ is nontrivial, it contains at least one element $v$ other than the zero vector. Now, for eack $k\in \mathbb{N}$, define  $ v_k := k \cdot v  \in H$, where $k \cdot v =v+v+\cdots +v $ (k-times). Then, for each $ k $, we have $ \|v_k\| = \|k \cdot v\| = k \cdot \|v\| $. Therefore, the sequence $ \{v_k\} $ is unbounded. This proves that $ H $ is unbounded.
\end{proof}
\subsection{Dense subgroups of $\mathbb{R}^m$}
Let $\mathbb{Q}$ be the rational numbers. Some dense subgroups of $\mathbb{R}^m$ are, for example, $\mathbb{Q}^m$, $\mathbb{Q}^k\times \mathbb{R}^{m-k}$, $\mathbb{Q}^{k_1}\times G^{k_2} \times \mathbb{R}^{k_3}$ where $k_1+k_2+k_3=m$ and $G^{k_2}$ is a dense subgroup of $\mathbb{R}^{k_2}$, etc. In the following Theorem and in the subsequent Proposition, it is described some conditions under which a subgroup of $\mathbb{R}^m$ becomes dense. 

\begin{tw}\label{ApB1}
    Let $\theta_1, \theta_2, \cdots,\theta_m$ real numbers.  For the subgroup 
    $$\mathbb{Z}^{m} + \mathbb{Z}(\theta_1, \theta_2, \cdots,\theta_m)= \{(s_1 + s_0\theta_1, \cdots,s_m+s_0\theta_m); \,\,(s_0,s_1,\cdots,s_m)\in \mathbb{Z}^{m+1}\}\subset \mathbb{R}^m $$ 
    to be dense in $\mathbb{R}^m$, it is necessary and sufficient that the $m+1$ numbers $1,\theta_1, \theta_2, \cdots,\theta_m$ be linearly independent on $\mathbb{Q}$.
\end{tw}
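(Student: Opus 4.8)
The statement to prove is the classical Kronecker–Weyl criterion: the subgroup $\mathbb{Z}^{m} + \mathbb{Z}(\theta_1,\dots,\theta_m)$ is dense in $\mathbb{R}^m$ if and only if $1,\theta_1,\dots,\theta_m$ are linearly independent over $\mathbb{Q}$.

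My plan is to argue via Pontryagin duality / characters, which is the cleanest route for a ``necessary and sufficient'' statement of this type. First I would recall that a subgroup $H\subset \mathbb{R}^m$ is dense if and only if the only continuous character $\chi:\mathbb{R}^m\to \mathbb{T}$ (i.e. $\chi(x)=e^{2\pi i\langle\xi,x\rangle}$ for some $\xi\in\mathbb{R}^m$) that is trivial on $H$ is the trivial character; equivalently, $\overline{H}=\mathbb{R}^m$ iff $H^\perp=\{0\}$ where $H^\perp=\{\xi\in\mathbb{R}^m: \langle\xi,h\rangle\in\mathbb{Z}\ \forall h\in H\}$. This is standard but I would include a short self-contained proof of the nontrivial direction (if $H^\perp=\{0\}$ then $H$ is dense): if $\overline{H}\neq\mathbb{R}^m$, pick $a\notin\overline{H}$, use a Urysohn-type bump function supported off $\overline{H}$, average it over the closed subgroup $\overline{H}$ (Haar measure on the quotient $\mathbb{R}^m/\overline{H}$, which is a nontrivial LCA group), and Fourier-expand to produce a nontrivial character vanishing on $\overline{H}\supseteq H$.

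Next, I would compute $H^\perp$ explicitly for $H=\mathbb{Z}^{m}+\mathbb{Z}\theta$ with $\theta=(\theta_1,\dots,\theta_m)$. A vector $\xi=(\xi_1,\dots,\xi_m)$ lies in $H^\perp$ iff $\langle\xi,e_j\rangle=\xi_j\in\mathbb{Z}$ for each standard basis vector $e_j$ (using $\mathbb{Z}^m\subset H$) and $\langle\xi,\theta\rangle=\sum_{j=1}^m \xi_j\theta_j\in\mathbb{Z}$. So $H^\perp=\{\xi\in\mathbb{Z}^m:\ \sum_{j=1}^m\xi_j\theta_j\in\mathbb{Z}\}$. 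Now the equivalence becomes transparent: $H^\perp=\{0\}$ iff there is no nonzero integer vector $(\xi_1,\dots,\xi_m)$ with $\sum_{j}\xi_j\theta_j=n$ for some integer $n$, i.e. no nontrivial integer relation $n\cdot 1 - \xi_1\theta_1 - \cdots - \xi_m\theta_m = 0$; and this says precisely that $1,\theta_1,\dots,\theta_m$ admit no nontrivial $\mathbb{Z}$-linear relation, which (clearing denominators) is the same as admitting no nontrivial $\mathbb{Q}$-linear relation, i.e. $\mathbb{Q}$-linear independence. Combining with the duality criterion gives both directions at once.

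The main obstacle is really just deciding how much of the duality machinery to prove versus cite; the arithmetic bookkeeping in identifying $H^\perp$ is routine. If I wanted to avoid Pontryagin duality entirely, the alternative is a direct one-dimensional reduction: for necessity, a nontrivial relation $n = \sum \xi_j\theta_j$ ($n,\xi_j\in\mathbb{Z}$, not all $\xi_j$ zero) forces $\langle\xi,h\rangle\in\mathbb{Z}$ for all $h\in H$, so $H$ lies in the union of the closed hyperplanes $\{x:\langle\xi,x\rangle=k\}$, $k\in\mathbb{Z}$, hence is not dense; for sufficiency, one induces on $m$ using the classical Kronecker theorem in one variable (the case $m=1$ with $\theta_1$ irrational, where density of $\mathbb{Z}+\mathbb{Z}\theta_1$ is elementary) together with a projection argument — but this inductive route is more delicate at the bookkeeping level than the character approach, so I would present the duality proof as primary and perhaps remark on the elementary alternative.
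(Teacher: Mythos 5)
Your proposal is mathematically sound, but note that the paper does not actually prove Theorem \ref{ApB1}: it states the result and refers the reader to \cite{elghaoui2015rational,waldschmidt1995topologie}, so there is no in-paper argument to compare against. Your duality route is the standard one in those references, and in fact the key criterion you invoke --- $H$ is dense iff the annihilator $H^\perp=\{\xi\in\mathbb{R}^m:\langle\xi,h\rangle\in\mathbb{Z}\ \forall h\in H\}$ is trivial --- is precisely Proposition \ref{ApB2} of the paper (a linear form $\phi$ with $\phi(H)\subset\mathbb{Z}$ is the same datum as a nonzero $\xi\in H^\perp$), and $H=\mathbb{Z}^m+\mathbb{Z}\theta$ is finitely generated, so you could simply cite that proposition and skip the Urysohn/Haar-measure digression entirely. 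The remaining arithmetic is handled correctly: from $\mathbb{Z}^m\subset H$ you get $H^\perp\subset\mathbb{Z}^m$, the extra condition is $\sum_j\xi_j\theta_j\in\mathbb{Z}$, and a nonzero $\xi\in H^\perp$ corresponds exactly to a nontrivial integer relation $n\cdot 1-\sum_j\xi_j\theta_j=0$ (the degenerate case $\xi=0$, $n\neq 0$ cannot occur, so triviality of $\xi$ and triviality of the full relation coincide), which after clearing denominators is equivalent to $\mathbb{Q}$-linear dependence of $1,\theta_1,\dots,\theta_m$. Your elementary observation for the necessity direction --- that a nontrivial relation confines $H$ to the closed, nowhere dense union of hyperplanes $\bigcup_{k\in\mathbb{Z}}\{x:\langle\xi,x\rangle=k\}$ --- is also correct and gives that direction without any duality machinery.
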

\begin{prop}\label{ApB2}
 Let $G$ be a finitely generated subgroup of $\mathbb{R}^m$. The following conditions are equivalent:  
 \begin{enumerate}
     \item $G$ is dense in $\mathbb{R}^m$.
     \item For any nonzero linear form $\phi: \mathbb{R}^m\rightarrow\mathbb{R}$ we have $\phi(G)\not\subset\mathbb{Z}$.
 \end{enumerate}
\end{prop}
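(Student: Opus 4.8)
The plan is to prove the two implications separately, since $(1)\Rightarrow(2)$ is a short continuity argument while $(2)\Rightarrow(1)$ requires the structure of closed subgroups of $\mathbb{R}^m$.

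For $(1)\Rightarrow(2)$, suppose $G$ is dense and let $\phi:\mathbb{R}^m\to\mathbb{R}$ be any nonzero linear form. Since $\phi$ is continuous and surjective onto $\mathbb{R}$, I would first show that $\phi(G)$ is dense in $\mathbb{R}$: given $y\in\mathbb{R}$, choose $x$ with $\phi(x)=y$ and a sequence $g_n\in G$ with $g_n\to x$ by density; continuity then gives $\phi(g_n)\to y$, so $y\in\overline{\phi(G)}$. As $\mathbb{Z}$ is closed and properly contained in $\mathbb{R}$, the density of $\phi(G)$ forces $\phi(G)\not\subseteq\mathbb{Z}$, which is exactly condition $(2)$.

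For $(2)\Rightarrow(1)$ I would argue by contraposition: assuming $G$ is not dense, I construct a nonzero linear form $\phi$ with $\phi(G)\subseteq\mathbb{Z}$. The central tool is the structure theorem for closed subgroups of $\mathbb{R}^m$, applied to the closure $H:=\overline{G}$: it yields a decomposition $H=V\oplus\Lambda$, where $V$ is a linear subspace of dimension $a$ and $\Lambda$ is a discrete subgroup generated by vectors $w_1,\dots,w_b$ that are linearly independent and span a subspace $W$ complementary to $V$. Since $G$ is not dense, $H\neq\mathbb{R}^m$; this forces $V\neq\mathbb{R}^m$ (otherwise $H\supseteq V=\mathbb{R}^m$), hence $a<m$. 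Because any linear form is continuous and $\mathbb{Z}$ is closed, the condition $\phi(G)\subseteq\mathbb{Z}$ is equivalent to $\phi(H)\subseteq\mathbb{Z}$, so it suffices to control $\phi$ on $V$ and on the generators $w_j$.

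To build $\phi$, I note that $\phi(V)$ is a linear subspace of $\mathbb{R}$ contained in $\mathbb{Z}$, hence $\{0\}$, so $\phi$ must vanish on $V$; I therefore pass to the quotient $\pi:\mathbb{R}^m\to\mathbb{R}^m/V\cong\mathbb{R}^{m-a}$. The images $\pi(w_1),\dots,\pi(w_b)$ are linearly independent with $b\le m-a$. If $b<m-a$, I choose a nonzero linear form $\psi$ on $\mathbb{R}^m/V$ vanishing on the span of these images, so that $\phi:=\psi\circ\pi$ is nonzero, vanishes on $V$, and kills $\Lambda$; if $b=m-a$ the images form a real basis of $\mathbb{R}^m/V$, and I prescribe $\psi(\pi(w_1))=1$, $\psi(\pi(w_j))=0$ for $j\ge 2$, extend linearly, and set $\phi:=\psi\circ\pi$, which is nonzero and satisfies $\phi(\Lambda)\subseteq\mathbb{Z}$. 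In either case $\phi(H)=\phi(V)+\phi(\Lambda)\subseteq\mathbb{Z}$ with $\phi\neq 0$, contradicting $(2)$. The main obstacle, and the step I would treat most carefully, is invoking and correctly applying the structure theorem (the decomposition $H=V\oplus\Lambda$ together with the fact that $H=\mathbb{R}^m$ only when $V=\mathbb{R}^m$); the construction of $\phi$ is then routine linear algebra. If a self-contained argument avoiding that theorem is preferred, the finite generation of $G$ allows an induction on the number of generators, reducing to the one-dimensional case and to Theorem \ref{ApB1}, at the cost of longer bookkeeping.
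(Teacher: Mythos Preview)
The paper does not supply its own proof of this proposition; immediately after stating it, the authors write ``For a proof of Theorem~\ref{ApB1} and Proposition~\ref{ApB2}, the reader is invited to consult the works \cite{elghaoui2015rational,waldschmidt1995topologie}.'' So there is no in-paper argument to compare against, and your proposal goes well beyond what the paper provides.

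On its own merits your argument is correct and is essentially the standard route. The implication $(1)\Rightarrow(2)$ is handled cleanly via continuity and surjectivity of $\phi$. For $(2)\Rightarrow(1)$, the contrapositive together with the structure theorem for closed subgroups of $\mathbb{R}^m$ (every closed subgroup is of the form $V\oplus\Lambda$ with $V$ a linear subspace and $\Lambda$ a lattice in a complement) is exactly the right tool; your case split $b<m-a$ versus $b=m-a$ and the construction of $\phi$ through the quotient $\mathbb{R}^m/V$ are both sound. The observation that $\phi(G)\subseteq\mathbb{Z}\iff\phi(\overline{G})\subseteq\mathbb{Z}$ is the key reduction and you justify it correctly.

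Two small remarks. First, your proof nowhere uses that $G$ is finitely generated, so you actually establish the equivalence for arbitrary subgroups of $\mathbb{R}^m$; the hypothesis in the paper is inherited from the cited references and is not needed for this argument. Second, the only external input is the structure theorem for closed subgroups of $\mathbb{R}^m$; if you want a self-contained write-up you should either include a short proof of that (it is a few lines once one knows the one-dimensional case) or give a precise reference such as Bourbaki, \emph{General Topology}, Chap.~VII, \S1.
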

For a proof of Theorem \ref{ApB1} and Proposition \ref{ApB2}, the reader is invited to consult the works 
\cite{elghaoui2015rational,waldschmidt1995topologie}.
%
%

Finally, we also present the next important fact on continuous functions:
\begin{prop}\label{grp}
    Let $f:\mathbb{R}^m \rightarrow \X$ be a continuous function and $A \subset \mathbb{R}^m$, then $$\overline{f(\overline{A})} = \overline{f(A)}.$$
\end{prop}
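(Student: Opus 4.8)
The final statement to prove is Proposition~\ref{grp}: for a continuous $f:\mathbb{R}^m\to\X$ and $A\subset\mathbb{R}^m$, one has $\overline{f(\overline{A})}=\overline{f(A)}$. The plan is to prove the two inclusions separately, using only elementary topological facts about continuous maps and closures.

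For the inclusion $\overline{f(A)}\subset\overline{f(\overline{A})}$, I would start from the trivial set inclusion $A\subset\overline{A}$, which gives $f(A)\subset f(\overline{A})\subset\overline{f(\overline{A})}$. Since $\overline{f(\overline{A})}$ is closed and contains $f(A)$, taking closures yields $\overline{f(A)}\subset\overline{f(\overline{A})}$. For the reverse inclusion $\overline{f(\overline{A})}\subset\overline{f(A)}$, it suffices (again because the right-hand side is closed) to show $f(\overline{A})\subset\overline{f(A)}$. This is exactly the standard characterization of continuity: for continuous $f$, $f(\overline{A})\subset\overline{f(A)}$. I would either invoke this directly or reprove it quickly: given $y\in f(\overline{A})$, write $y=f(x)$ with $x\in\overline{A}$, pick a sequence $(x_n)\subset A$ with $x_n\to x$ (available since $\mathbb{R}^m$ is metric), and use continuity of $f$ to get $f(x_n)\to f(x)=y$, so $y\in\overline{f(A)}$.

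Combining the two inclusions gives the desired equality. There is essentially no obstacle here — the result is a routine consequence of continuity and the metrizability of $\mathbb{R}^m$ (so that closures can be described via convergent sequences and the image of a convergent sequence under a continuous map converges to the image of the limit). The only mild point of care is to keep straight which closure is being taken in which space: $\overline{A}$ is a closure in $\mathbb{R}^m$, while $\overline{f(A)}$ and $\overline{f(\overline{A})}$ are closures in $\X$; both are metric spaces, so the sequential arguments are valid in each. I would present the proof in this two-inclusion format, keeping it to a few lines.
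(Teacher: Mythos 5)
Your proof is correct and is the standard two-inclusion argument (monotonicity of closure for one direction, the continuity characterization $f(\overline{A})\subset\overline{f(A)}$ for the other); the paper states Proposition~\ref{grp} without proof, so there is nothing to compare against, but your argument is exactly what is needed.
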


\subsection*{Acknowledgments}
The authors would like to express their gratitude to the anonymous referees for their careful reading and helpful comments.

\subsection*{Funding}
A. Ch\'avez and J. Casta\~neda were supported by CONCYTEC through the PROCIENCIA program under the E041-2023-01 competition, according to contract PE501082885-2023 


\section*{Declarations}

\subsection*{Conflict of interest}
No potential conflict of interest was reported by the authors.



%
%
%
%
%
%
\end{document}